\theoremstyle{plain}
\newtheorem{theorem}{Theorem}[section]
\newtheorem{corollary}[theorem]{Corollary}
\newtheorem{proposition}[theorem]{Proposition}
\newtheorem{lemma}[theorem]{Lemma}
\theoremstyle{definition}
\newtheorem{definition}[theorem]{Definition}
\theoremstyle{remark}
\newtheorem{remark}[theorem]{Remark}
\newcommand{\Proj}{\textnormal{Proj}\,}
\newcommand{\Spec}{\textnormal{Spec}\,}
\newcommand{\hilb}{{\textnormal{Hilb}}}
\newcommand{\hilbp}{{\textnormal{Hilb}_{p(t)}^n}}
\newcommand{\Mf}{\mathrm{Mf}}
\newcommand{\id}{\mathfrak}
\newcommand{\cN}{\mathcal{N}}
\newcommand{\Ht}{\textnormal{Ht}}
\newcommand{\supp}{\mathrm{Supp}}
\newcommand{\PP}{\mathbb{P}}
\title
{Smoothable Gorenstein points via marked schemes and double-generic initial ideals}
\author{
\textsc{Cristina Bertone}\\ 
\small Dipartimento di Matematica dell'Universit\`a di Torino,  Torino, Italy \\ 
\small \href{mailto:cristina.bertone@unito.it}{cristina.bertone@unito.it}
\and
\textsc{Francesca Cioffi}\\
\small Dipartimento di Matematica e Applicazioni dell'Universit\`a di Napoli Federico II, Napoli, Italy\\
\small \href{mailto:cioffifr@unina.it}{cioffifr@unina.it}
\and
\textsc{Margherita Roggero}\\
\small Dipartimento di Matematica dell'Universit\`a di Torino, Torino, Italy\\
\small \href{mailto:margherita.roggero@unito.it}{margherita.roggero@unito.it}}
\date{}
\begin{document}

\maketitle

\begin{abstract}
Over an infinite field $K$ with $\mathrm{char}(K)\neq 2,3$, we investigate smoothable Gorenstein $K$-points in a punctual Hilbert scheme from a new point of view, which is based on properties of double-generic initial ideals and of marked schemes.  We obtain the following results: (i) points defined by graded  Gorenstein $K$-algebras with Hilbert function $(1,7,7,1)$ are smoothable, in the further hypothesis that $K$ is algebraically closed; (ii)~the Hilbert scheme $\hilb_{16}^7$ has at least three irreducible components. The properties of marked schemes give us a simple method to compute the Zariski tangent space to a Hilbert scheme at a given $K$-point, which is very useful in this context. Over an algebraically closed field of characteristic $0$, we also test our tools to find the already known result that points defined by graded Gorenstein $K$-algebras with Hilbert function $(1,5,5,1)$ are smoothable. In characteristic zero, all the results about smoothable points also hold for local Artin Gorenstein $K$-algebras.
\end{abstract}

\section*{Introduction}

Let $K$ be an infinite field of characteristic other than $2$ and $3$. For any positive integer $n$ and an admissible Hilbert polynomial $p(t)$, we denote by $\hilbp$ the Hilbert scheme parameterizing the projective subschemes of $\mathbb P^n_K$ with Hilbert polynomial $p(t)$. We deal with punctual Hilbert schemes, hence with constant Hilbert polynomials, and when we take a point we mean a $K$-valued point ($K$-point, for short), i.e.~a closed point with residue field $K$. 

Let $p(t)=d$ be the Hilbert polynomial of $d$ points. The {\em smoothable component $\mathcal R_d^n$ of $\hilbp$} is the closure of the open set of points corresponding to ideals of $d$ distinct points, i.e.~the rational component of $\hilbp$ containing the point corresponding to the lex-segment ideal.

A zero-dimensional subscheme $X$ is {\em smoothable} if it belongs to the  smoothable component $\mathcal R_d^n$ or, equivalently, the $K$-algebra $A$ defining $X=\Proj (A)$ is isomorphic to the special fiber of a flat one-parameter family of $K$-algebras with smooth general point (e.g.~\citep[Lemma 4.1]{CEVV}, see also \citep[Definitions 5.16 and 6.20]{IK99}).

As noted in \citep[Remark 1.6]{CEVV} for Hilbert schemes of points, every point in $\hilbp$ has an open neighborhood that can be studied by suitable \lq\lq affine\rq\rq\ techniques. In the same context, a similar approach is also used in \citep[Chapter 18]{MS}. So, up to a suitable change of coordinates, we can identify every point of a punctual Hilbert scheme $\hilb^n_{d}$ with an ideal in $K[x_1,\dots,x_n]$, non-necessarily homogeneous, with affine Hilbert polynomial $p(t)=d$. 

Due to the structure theorem of Artin rings and to the fact that direct sums commute with limits of flat families, a zero-dimensional subscheme $X$ is smoothable if and only if the same is true for all its irreducible components (e.g.~\citep[page 1245]{CN10}, \citep[Section 4]{CEVV}). This observation motivates interest for the so-called {\em elementary components} of a punctual Hilbert scheme, i.e.~components whose  points parameterize  zero-dimensional subschemes with support of cardinality one (see \citep{JJ-arxiv} for a very recent contribution in this context). Hence, the problem of detecting  smoothable points is connected to the study of ideals $I$ such that $R/I$ is a local $K$-algebra.

This paper is devoted to investigate when Gorenstein points in a punctual Hilbert scheme are smoothable. In particular, we are interested in studying Gorenstein points defined by  graded (Artin) $K$-algebras with Hilbert function $(1,7,7,1)$, that is the only case not treated in the range considered in \citep[Lemma 6.21]{IK99} for the detection of nonsmoothable points in a punctual Hilbert scheme in characteristic $0$. Observe that a graded Artin $K$-algebra is necessarily local, in particular defines a scheme supported on a single point. The vice versa holds on an algebraically closed field of characteristic zero, that is every local Artin Gorenstein $K$-algebra is graded, due to \citep[Theorem 3.3]{ER2012}.

The study of Gorenstein smoothable points is strictly related to the study of the irreducibility of the Gorenstein locus in a Hilbert scheme. In this context,  it is well-known that a punctual Hilbert scheme $\hilb_d^n$ is irreducible if $n=2$ (see \citep{Fo}) and if $d\leq 7$ for $n\geq 3$ (see \citep{CEVV}). Moreover, the Gorenstein locus of $\hilb_d^n$ is irreducible if $d\leq 13$ for every $n$ (see \citep{CN9,CN10,CN14,CN15} and the references therein). Other relevant and also more general results about irreducibility in a Hilbert scheme are due to Ellingsrud and Iarrobino. 

We prove the following results:
\begin{itemize}
\item[(i)] The graded Gorenstein $K$-algebras with Hilbert function $(1,7,7,1)$ are smoothable, in the further hypothesis that $K$ is algebraically closed (see Theorem \ref{Gor7}).
\item[(ii)] There are at least three irreducible components in $\hilb_{16}^7$  (see Section \ref{IrrComp7}).
\end{itemize}
Moreover, we show how our arguments apply to prove the now known result that  graded Gorenstein $K$-algebras with Hilbert function $(1,5,5,1)$ are smoothable, on an algebraically closed field of characterisic $0$ (see Theorem \ref{Gor5}). 

An outline of some results of the present paper was described by ansatz in \citep{BCR-arxiv} as an application of the constructive methods about marked bases in an affine framework that were lately deeply studied and completely described in \citep{BCR}.
Here, we give an extensive description of the outcome of our study. Different proofs of the case $(1,5,5,1)$ were presented in \citep{JJ}, contemporary to our first version given in \citep{BCR-arxiv}, and later in \citep{CN15} when $\mathrm{char}(K)\neq 2,3$.

We obtain our results facing the problem from a new point of view: we apply the notion of double-generic initial ideal (see \citep{BCR-glin}) and constructive methods that are based on marked schemes (see \citep{BCR} and the references therein). These methods are also useful to compute the Zariski tangent space to a Hilbert scheme at a given point (see Corollary \ref{cor:tangente} and Remark \ref{rem:tangente}). Essentially, step by step we alternate experimental results and theoretical properties of our tools, which have been applied in this context for the first time. 

The paper is organized in the following way. In Section \ref{sec:marked}, we describe the results about marked schemes that we need in our arguments. These results also give information on the computation of the Zariski tangent space to a Hilbert scheme via marked schemes (Corollary \ref{cor:tangente}). In Section \ref{sec:gore}, we focus on Gorenstein schemes and their relation with double-generic initial ideals when the Hilbert function is of type $(1,n,n,1)$ (Proposition \ref{prop:GLstable}).  In Sections \ref{GorSmooth} and \ref{GorSmooth2} we prove that graded Artin Gorenstein $K$-algebras with Hilbert function either $(1,7,7,1)$ or $(1,5,5,1)$ are smoothable. In characteristic $0$, this result also holds for local Artin Gorenstein $K$-algebras, due to \citep[Theorem 3.3]{ER2012}. In Section \ref{IrrComp7} we prove the existence of three different components in $\hilb_{16}^7$ that we explicitly describe (in the Appendix, we list the outputs of some of the computations involved in this proof).

\section{Backgroud: marked schemes and Zariski tangent space}
\label{sec:marked}

In this paper, we work in an affine framework and apply the affine computational techniques developed in \citep{BCR}. Then, in this section, we set some notations and recall the main notions involved in these techniques. Moreover, we give some new insights for the computation of the Zariski tangent space to a Hilbert scheme at a given point.

We will consider the rings of polynomials $R=K[x_1,\dots,x_n]\subset S=R[x_0]$, with $x_0<x_1<\dots <x_n$. For a term $x^\alpha=x_0^{\alpha_0}x_1^{\alpha_1}\dots x_n^{\alpha_n}$ we set $\vert\alpha\vert:=\sum_i \alpha_i$, $\max(x^\alpha):=\max\{x_i\ \vert\ \alpha_i\neq 0\}$ and $\min(x^\alpha):=\min\{x_i \ \vert\ \alpha_i\neq 0\}$. For a non-null polynomial $f$ we denote by $\supp(f)$ its {\em support}, that is the set of terms that appears in $f$ with a non-null coefficient. 
If $f$ is a polynomial in $R$ then we denote by $f^h:=x_0^{deg(f)}f(\frac{x_1}{x_0},\dots,\frac{x_n}{x_0})$ its homogenization, and if $F$ is a polynomial in $S$ then we denote by $F^a:=F(1,x_1,\dots,x_n)$ its dehomogenization. 

Given a monomial ideal $\mathfrak j\subset R$ (resp.~$J\subset S$), we denote by $\mathcal N(\mathfrak j)$ (resp.~$\mathcal N(J)$)  the set of terms of $R$ outside $\mathfrak j$ (resp.~of $S$ outside $J$) and by $B_{\mathfrak j}$ (resp.~$B_J$) its minimal monomial basis. We refer to \citep{KR,KR2,Mora} for results concerning Gr\"obner bases and Hilbert functions.

The results we are going to recall use the notion of \emph{quasi-stable} ideal. It is well-known that a monomial ideal $J$ is quasi-stable if and only if it has a so-called Pommaret basis \citep[Definition 4.3 and Proposition 4.4]{Seiler2009II}. In general, a Pommaret basis $\mathcal P(J)$ strictly contains the minimal monomial basis $B_J$.  The quasi-stable ideals having $\mathcal P(J)=B_J$ are called \emph{stable} ideals. Stable ideals have a nice combinatorial characterization: for each term in a stable ideal, replacing the variable of smallest index with a variable of larger index produces another term in the ideal. 
In our setting we will only consider a special set of stable ideals, namely \emph{strongly stable} ideals: in each term in a strongly stable ideal, we may replace any variable with a variable of larger index to get another term in the ideal. In characteristic 0, Borel-fixed ideals are strongly stable (see for example \citep{BS87}). Although when $J$ is strongly stable we have $\mathcal P(J)=B_J$, in the following we will use the notation of Pommaret bases, according to papers \citep{CMR13, BCR}.
 
Recall that a {\em marked polynomial} is a polynomial $F$ together with a specified term of $\supp(F)$ that will be called {\em head term of $F$} and denoted by $\Ht(F)$ (see \citep{RS}).  

\begin{definition}\label{def:cose marcate} \citep[Definition 5.1]{CMR13}
Let $J\subset S$ be a quasi-stable ideal.

A {\em $\mathcal P(J)$-marked set} (or marked set over $\mathcal P(J)$) $G$ is a set of homogeneous monic marked polynomials $F_\alpha$ in $S$ such that  the head terms $Ht(F_\alpha)=x^\alpha$ are pairwise different and form the Pommaret basis $\mathcal P(J)$ of $J$, and $\supp(F_\alpha-x^\alpha)\subset \mathcal N(J)$.

A $\mathcal P(J)$\emph{-marked basis} (or marked basis over $\mathcal P(J)$) $G$ is a $\mathcal P(J)$-marked set such that $\mathcal N(J)$ is a basis of $S/(G)$ as a $K$-module, i.e.~$S=(G)\oplus \langle \cN (J) \rangle$ as a $K$-module.
\end{definition} 

Let $\mathfrak j\subset R$ be a quasi-stable ideal and $m$ a non-negative integer. Setting $J:={\mathfrak j}\cdot S$, we now recall the affine counterpart of Definition \ref{def:cose marcate}. 

\begin{definition}\label{marked affini} \citep[Definition 4.1]{BCR}

A {\em $[\mathcal P({\mathfrak j}),m]$-marked set} $\mathfrak G$ is a set of monic marked  polynomials $f_\alpha$ of $R$ such that the head terms $\Ht(f_\alpha)=x^\alpha$ are pairwise different and form the Pommaret basis $\mathcal P({\mathfrak j})$ of $\id j$, and $\supp(f_\alpha-x^\alpha)\subseteq \mathcal N(\id j)_{\leq t}$ with $t=\max\{m,\vert \alpha \vert\}$. 

A $[\mathcal P({\mathfrak j}),m]$-marked set $\mathfrak G=\{f_\alpha\}_{x^\alpha \in B_{\mathfrak j}}$ is a {\em $[\mathcal P({\mathfrak j}),m]$-marked basis} if there exists a $\mathcal P({J_{\geq m}})$-marked basis $G$ such that for every $x^\alpha \in B_{\mathfrak j}$ the term $x_0^{k_\alpha}f_\alpha^h$ belongs to $G$ for a suitable integer $k_\alpha$.

The {\em $[\mathcal P({\mathfrak j}),m]$-marked family} is the set of all the ideals $I\subseteq R$ that are generated by a $[\mathcal P({\mathfrak j}),m]$-marked basis.
\end{definition}

\begin{lemma}\label{lemma:fondamentale} 
\citep[Lemma 6.1(i) and Definition 6.2]{BCR}
An ideal $I\subset R$ belongs to a $[\mathcal P({\mathfrak j}),m]$-marked family  if and only if $R_{\leq t}= I_{\leq t}\bigoplus \langle \mathcal N(\mathfrak j)_{\leq t}\rangle$, for every $t\geq m$. 
\end{lemma}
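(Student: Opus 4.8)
The plan is to unwind the definition of a $[\mathcal P(\mathfrak j),m]$-marked basis in terms of its homogenization and transfer to it the corresponding direct-sum decomposition for $\mathcal P(J_{\geq m})$-marked bases, which is (essentially by \citep[Definition 5.1]{CMR13}, recalled here as Definition \ref{def:cose marcate}) the fact that $S = (G) \oplus \langle \cN(J) \rangle$ as a $K$-module in every degree. The key bookkeeping device is the pair of operations $f \mapsto f^h$ and $F \mapsto F^a$ relating $R = K[x_1,\dots,x_n]$ and $S = R[x_0]$: dehomogenization is a $K$-algebra surjection $S \to R$ whose kernel is generated by $x_0-1$, and for an ideal $I \subseteq R$ with affine Hilbert polynomial $d$ one has $I = (I^h)^a$, where $I^h$ is the homogenization (saturation with respect to $x_0$). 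So I would first set up the bridge carefully, recording that $x_0^{k}f_\alpha^h \in G$ forces $(G)^a = I$ and, conversely, controlling how degrees in $S$ project onto affine degrees in $R$: a homogeneous polynomial of degree $t$ in $S$ dehomogenizes to a polynomial of affine degree $\le t$ in $R$, and conversely every $g \in R_{\le t}$ lifts to $x_0^{t - \deg g} g^h \in S_t$.

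Next I would prove the ``only if'' direction. Assume $I$ is generated by a $[\mathcal P(\mathfrak j),m]$-marked basis $\mathfrak G$, so by definition there is a $\mathcal P(J_{\geq m})$-marked basis $G$ in $S$ with $x_0^{k_\alpha} f_\alpha^h \in G$ for each $x^\alpha \in B_{\mathfrak j}$. Fix $t \ge m$. By the marked-basis property of $G$ in degree $t$ we have $S_t = (G)_t \oplus \langle \cN(J)_t \rangle$. I want to dehomogenize this in the slice of affine degree $\le t$. Because $x_0-1$ acts invertibly after dehomogenization, the image of $(G)_t$ under $F \mapsto F^a$ lands inside $I_{\le t}$ (each element of $(G)_t$ is an $S$-combination of the homogeneous generators of $G$, whence its dehomogenization is an $R$-combination of the $f_\alpha$ of affine degree $\le t$ — here I would invoke the degree bound $\supp(f_\alpha - x^\alpha) \subseteq \cN(\mathfrak j)_{\le \max\{m,|\alpha|\}}$ built into Definition \ref{marked affini} to keep the $f_\alpha$ inside $R_{\le t}$ when $|\alpha| \le t$, and note the $f_\alpha$ with $|\alpha| > t$ contribute nothing in degree $\le t$), and the image of $\langle \cN(J)_t\rangle$ is exactly $\langle \cN(\mathfrak j)_{\le t}\rangle$ since the terms of $\cN(J)$ of degree $t$ dehomogenize bijectively onto the terms of $\cN(\mathfrak j)$ of degree $\le t$ (a term $x_0^{j} x^\beta$ of degree $t$ lies outside $J = \mathfrak j S$ iff $x^\beta \notin \mathfrak j$, and $x^\beta$ has degree $t-j \le t$; conversely each $x^\beta \in \cN(\mathfrak j)$ with $|\beta| \le t$ arises uniquely as $(x_0^{t-|\beta|} x^\beta)^a$). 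Applying $(-)^a$ to the direct sum and using that $R_{\le t} = (S_t)^a$ gives $R_{\le t} = I_{\le t} + \langle \cN(\mathfrak j)_{\le t}\rangle$; directness follows because a nonzero element of the intersection would lift back to a nonzero element of $(G)_t \cap \langle \cN(J)_t\rangle$ (using that a homogeneous preimage of prescribed degree always exists and that $\cN(J)_t \to \cN(\mathfrak j)_{\le t}$ is a bijection compatible with the lift), contradicting the decomposition in $S$.

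For the ``if'' direction I would run the same correspondence in reverse: assuming $R_{\le t} = I_{\le t} \oplus \langle \cN(\mathfrak j)_{\le t}\rangle$ for all $t \ge m$, I build candidate marked polynomials $f_\alpha$ from this decomposition — for each $x^\alpha \in B_{\mathfrak j}$, writing $x^\alpha = (\text{element of } I) + (\text{element of } \langle\cN(\mathfrak j)\rangle)$ in degree $\max\{m,|\alpha|\}$ produces $f_\alpha \in I$ with $\Ht(f_\alpha) = x^\alpha$ and $\supp(f_\alpha - x^\alpha) \subseteq \cN(\mathfrak j)_{\le \max\{m,|\alpha|\}}$ — and then homogenize to get a $\mathcal P(J_{\ge m})$-marked set $G = \{x_0^{k_\alpha} f_\alpha^h\}$ in $S$. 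The task is to check $G$ is a marked \emph{basis}, i.e.\ $S_t = (G)_t \oplus \langle \cN(J)_t\rangle$ for $t \ge m$; this follows by homogenizing the affine decomposition degree by degree, using again the bijection $\cN(J)_t \leftrightarrow \cN(\mathfrak j)_{\le t}$ and the fact that multiplication by powers of $x_0$ is an isomorphism onto the appropriate graded pieces. Since $I \subseteq R$ is then generated by a $[\mathcal P(\mathfrak j),m]$-marked basis by Definition \ref{marked affini}, we are done; alternatively, one may simply cite that Lemma \ref{lemma:fondamentale} is exactly \citep[Lemma 6.1(i) and Definition 6.2]{BCR} and that the equivalence there is proved by this homogenization/dehomogenization argument.

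The main obstacle I anticipate is not conceptual but bookkeeping: carefully matching the two notions of degree (the single grading on $S$ versus the filtration by affine degree on $R$) through the $x_0$-powers $x_0^{k_\alpha}$, and making sure directness of the sum — not just that it spans — is preserved in both directions. In particular one must verify that no cancellation is introduced when dehomogenizing a genuine direct sum in $S$, and that the degree bound $\supp(f_\alpha - x^\alpha) \subseteq \cN(\mathfrak j)_{\le \max\{m,|\alpha|\}}$ is exactly what is needed for the homogenizations $x_0^{k_\alpha} f_\alpha^h$ to form a \emph{Pommaret} basis of the truncated ideal $J_{\ge m}$ (so that Definition \ref{def:cose marcate} applies verbatim), which is where the hypothesis $t \ge m$ enters essentially.
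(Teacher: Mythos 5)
The first thing to note is that the paper does not prove this statement at all: Lemma \ref{lemma:fondamentale} is recalled verbatim from \citep[Lemma 6.1(i) and Definition 6.2]{BCR}, so there is no internal proof to compare against, and your closing option of ``simply citing [BCR]'' is in fact exactly what the authors do. Your homogenization/dehomogenization strategy is the right general shape of the argument carried out in that reference, but as written it has two concrete gaps.

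First, in the ``if'' direction the set $G=\{x_0^{k_\alpha}f_\alpha^h\}_{x^\alpha\in B_{\mathfrak j}}$ is \emph{not} a $\mathcal P(J_{\geq m})$-marked set: by Definition \ref{def:cose marcate} the head terms of a marked set over $\mathcal P(J_{\geq m})$ must exhaust the entire Pommaret basis of the truncation $J_{\geq m}$, which is strictly larger than $\{x_0^{k_\alpha}x^\alpha : x^\alpha\in B_{\mathfrak j}\}$ (for instance, for $\mathfrak j_G$ and $m=3$ it contains every degree-$3$ term of $J$, of which there are far more than $32$). Definition \ref{marked affini} only asks that the $x_0^{k_\alpha}f_\alpha^h$ \emph{belong to} some marked basis $G$; you still have to construct the remaining marked polynomials of $G$ from the affine decomposition and verify that the whole collection satisfies $S_t=(G)_t\oplus\langle\mathcal N(J_{\geq m})_t\rangle$, and also that the $f_\alpha$ actually generate $I$. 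Second, in both directions you use the identification $\left((G)_t\right)^a=I_{\leq t}$ (or $(G)^a=I$) as if it followed from $x_0^{k_\alpha}f_\alpha^h\in G$; that only gives one inclusion, since the extra generators of $G$ must be shown to dehomogenize into $(f_\alpha)$. Relatedly, your directness argument lifts a nonzero $g\in I_{\leq t}\cap\langle\mathcal N(\mathfrak j)_{\leq t}\rangle$ to $x_0^{t-\deg g}g^h\in S_t$ and claims it lies in $(G)_t$; a priori one only knows $x_0^Ng^h\in(G)$ for some possibly large $N$, so you should either invoke the absence of $x_0$-torsion in $S/(G)$ in degrees $\geq m$ or, more simply, derive the contradiction in the higher degree $s=N+\deg g\geq m$, where the decomposition $S_s=(G)_s\oplus\langle\mathcal N(J)_s\rangle$ still applies. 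The second family of issues is repairable along these lines; the first is the substantive missing construction, and it is precisely the technical content delegated to \citep{BCR}.
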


\begin{theorem}\label{th:cose marcate} \citep[Theorem 6.6 and Proposition 6.13]{BCR}
A $[\mathcal P({\mathfrak j}),m]$-marked family is parameterized by a locally closed subscheme $\Mf_{\mathcal P({\mathfrak j}),m}$ of the Hilbert scheme $\hilbp$, where $p(t)$ is the affine Hilbert polynomial of $R/\mathfrak j$. If $\rho$ is the satiety of $\mathfrak j$ and $m\geq \rho-1$, then $\Mf_{\mathcal P({\mathfrak j}),m}$ is an open subscheme of $\hilbp$.
\end{theorem}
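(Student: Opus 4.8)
The plan is to prove this in two stages matching the two assertions: first that the marked family is parameterized by a locally closed subscheme $\Mf_{\mathcal P(\mathfrak j),m}$ of $\hilbp$, and then that this subscheme is open when $m\geq \rho-1$.

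For the first stage, I would build $\Mf_{\mathcal P(\mathfrak j),m}$ explicitly as a representable functor. Start from the ``universal'' $[\mathcal P(\mathfrak j),m]$-marked set: introduce a polynomial ring $K[C]$ in indeterminates $C=\{C_{\alpha,\delta}\}$, one for each pair consisting of a head term $x^\alpha\in B_{\mathfrak j}$ and a term $x^\delta\in\mathcal N(\mathfrak j)_{\leq t_\alpha}$ with $t_\alpha=\max\{m,|\alpha|\}$, and form the generic marked polynomials $f_\alpha = x^\alpha + \sum_\delta C_{\alpha,\delta} x^\delta$ inside $K[C][x_1,\dots,x_n]$. By Lemma \ref{lemma:fondamentale}, an ideal $I$ lies in the marked family iff $R_{\leq t}=I_{\leq t}\oplus\langle\mathcal N(\mathfrak j)_{\leq t}\rangle$ for all $t\geq m$; I would translate this condition into polynomial equations in the $C$'s by reducing the products $x_i f_\alpha$ (for all $i$ and all relevant degrees up to the Gotzmann-type bound) against the generic marked set and imposing that every term of $\mathcal N(\mathfrak j)$ appearing in the remainder has coefficient zero — equivalently, imposing the rewriting/confluence relations. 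This is essentially the affine analogue of the Gröbner-stratum / marked-scheme construction of \citep{CMR13} already recalled; it is carried out in full in \citep[Theorem 6.6]{BCR}, so here I would only indicate that the resulting affine scheme $\Spec K[C]/\mathfrak a$ represents the functor sending a $K$-algebra $A$ to the set of $[\mathcal P(\mathfrak j),m]$-marked bases over $A$, and that the flatness built into Definition \ref{marked affini} (via the homogenizations $x_0^{k_\alpha}f_\alpha^h$ lying in a genuine $\mathcal P(J_{\geq m})$-marked basis) guarantees the family has constant affine Hilbert polynomial $p(t)$, hence defines a morphism to $\hilbp$. That this morphism is a locally closed immersion follows because the marked-basis conditions pin down $I$ uniquely from its truncation, so the morphism is injective on points and on the universal family, and one checks it is an immersion onto a locally closed subscheme by the same argument as in the graded case.

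For the second stage — openness when $m\geq\rho-1$, where $\rho$ is the satiety of $\mathfrak j$ — the key point is that for $m$ beyond the satiety, the single truncation $R_{\leq m}=I_{\leq m}\oplus\langle\mathcal N(\mathfrak j)_{\leq m}\rangle$ already determines membership, and this is an \emph{open} condition on $\hilbp$: at a point $[I_0]\in\Mf_{\mathcal P(\mathfrak j),m}$ the $K$-vector space $R_{\leq m}$ decomposes as above, and for $I$ in a Zariski neighborhood the subspace $(I)_{\leq m}$ has the same dimension and its projection onto $\langle\mathcal N(\mathfrak j)_{\leq m}\rangle$ along $(I_0)_{\leq m}$ remains an isomorphism (a maximal-minor nonvanishing condition). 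One must then argue that this local splitting in degree $\leq m$ propagates to all $t\geq m$, i.e.~that a nearby ideal $I$ actually belongs to the marked family and not merely satisfies the degree-$m$ condition: here I would invoke that the satiety bound makes the truncation $\mathfrak j_{\geq \rho}$ (hence $J_{\geq m}$) generated in a single degree behave like a saturated ideal, so that the graded marked-basis theory of \citep{CMR13} — where the analogous openness for $m$ past the regularity/satiety is known — transfers via the homogenization correspondence of Definition \ref{marked affini}. Combining, $\Mf_{\mathcal P(\mathfrak j),m}$ is cut out of an open set of $\hilbp$ by conditions that are vacuous there, hence is itself open.

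The main obstacle I expect is the second stage: showing that the degree-$\leq m$ splitting genuinely \emph{extends} to all higher degrees for nearby ideals, rather than just being a necessary condition. In the graded setting this rests on Gotzmann persistence / regularity estimates for quasi-stable (Pommaret) ideals; in the affine setting one has to control the interplay between the satiety $\rho$ of $\mathfrak j$ and the Castelnuovo–Mumford regularity of the homogenized ideal $J$, and make sure the hypothesis $m\geq\rho-1$ is exactly what is needed for the homogenization $f_\alpha\mapsto x_0^{k_\alpha}f_\alpha^h$ to land in a bona fide Pommaret-marked basis with no ``spurious'' $x_0$-torsion. Once that bookkeeping is in place, the openness is a standard semicontinuity argument. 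The rest — the explicit equations defining $\Mf_{\mathcal P(\mathfrak j),m}$ and the verification that it maps to $\hilbp$ — is routine bookkeeping modelled on \citep{BCR,CMR13}.
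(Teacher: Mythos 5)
This statement is quoted background: the paper offers no proof of its own, citing \citep[Theorem 6.6 and Proposition 6.13]{BCR} instead, so there is no internal argument to compare yours against. Your outline --- parameterizing marked bases by the coefficients $C$, cutting out $\Spec \left(K[C]/\mathfrak U\right)$ by the reduction relations, mapping to $\hilbp$ via the (constant) affine Hilbert polynomial, and obtaining openness for $m\ge\rho-1$ from a single non-vanishing condition --- is consistent with the construction the paper recalls in Theorem \ref{th:schema marcato} and with how it later uses the openness (the marked scheme as the locus of $\hilbp$ where the Pl\"ucker coordinate of ${\mathfrak j}^h$ is invertible, cf.\ Section \ref{sec:components}). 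You correctly identify, and explicitly defer to the cited source, the one genuinely delicate point: why the hypothesis $m\ge\rho-1$ makes the closed reduction conditions redundant on that open locus, which is exactly the content of \citep[Proposition 6.13]{BCR}.
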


The scheme $\Mf_{\mathcal P({\mathfrak j}),m}$ of Theorem \ref{th:cose marcate} is called {\em $[\mathcal P({\mathfrak j}),m]$-marked scheme}. 

\begin{theorem}\label{th:schema marcato} \citep[Section 6]{BCR}
The scheme $\Mf_{\mathcal P({\mathfrak j}),m}$ is the spectrum $\Spec \left(K[C]/\mathfrak U\right)$, where $C$ is the set of parameters corresponding to the possible coefficients in the polynomials of a $[\mathcal P({\mathfrak j}),m]$-marked basis, and the ideal $\mathfrak U$ is generated by the relations that are satisfied by these coefficients and which can be computed by \citep[Algorithm $\textsc{MarkedScheme}(\id j, m)$]{BCR}.
\end{theorem}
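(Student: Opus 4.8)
The plan is to equip the marked scheme $\Mf_{\mathcal P(\id j),m}$ --- which by Theorem \ref{th:cose marcate} is already known to exist as a locally closed subscheme of $\hilbp$ --- with explicit affine coordinates, namely the coefficients appearing in a generic marked basis, and then to recognize its ideal of relations as the output of the algorithm. First I would fix, for each $x^\alpha\in\mathcal P(\id j)$, the \emph{generic} marked polynomial $f_\alpha=x^\alpha-\sum_{x^\beta} c_{\alpha,\beta}\,x^\beta$, where $x^\beta$ runs over $\mathcal N(\id j)_{\leq t_\alpha}$ with $t_\alpha=\max\{m,|\alpha|\}$ and the $c_{\alpha,\beta}$ are new indeterminates forming the set $C$. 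This produces a generic $[\mathcal P(\id j),m]$-marked set $\widetilde{\mathfrak G}$ over the polynomial ring $K[C]$ and an ideal $\widetilde I=(\widetilde{\mathfrak G})\subseteq K[C][x_1,\dots,x_n]$. A $K$-point of $\Spec K[C]$ is exactly a specialization $c_{\alpha,\beta}\mapsto K$, i.e. an ordinary $[\mathcal P(\id j),m]$-marked set; so the task reduces to cutting out, inside $\Spec K[C]$, the locus where such a marked set is actually a marked basis.

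The heart of the argument is to show that this locus is defined by finitely many, explicitly computable polynomial equations in the $c_{\alpha,\beta}$. By Lemma \ref{lemma:fondamentale}, being a marked basis means $R_{\leq t}=I_{\leq t}\oplus\langle\mathcal N(\id j)_{\leq t}\rangle$ for every $t\geq m$, which is a priori an infinite list of conditions. Using the Pommaret structure of $\mathcal P(\id j)$ (the quasi-stability hypothesis is essential here), one first checks that the rewriting relation attached to a marked set --- replace an occurrence of a head term $x^\alpha$ in a polynomial by $x^\alpha-f_\alpha\in\langle\mathcal N(\id j)\rangle$, with the degree bookkeeping dictated by $m$ --- is Noetherian, so that every polynomial of $R$ has a well-defined remainder supported on $\mathcal N(\id j)$. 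One then establishes a Buchberger-type criterion: the whole family of decompositions above holds if and only if, for each $x^\alpha\in\mathcal P(\id j)$ and each variable $x_i$ non-multiplicative for $x^\alpha$ (together with the auxiliary low-degree products needed to certify the decomposition up to degree $m$), the reduction of $x_i f_\alpha$ modulo the marked set returns $0$. This replaces the infinitely many degree conditions by a finite set of critical products.

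With this in hand, I would run those finitely many reductions uniformly on the generic set $\widetilde{\mathfrak G}$: each critical product $x_i f_\alpha$ reduces in finitely many steps to a polynomial $r_{\alpha,i}=\sum_{x^\gamma\in\mathcal N(\id j)} P^{(\alpha,i)}_\gamma(C)\,x^\gamma$ whose coefficients are genuine polynomials in $K[C]$; this is precisely the computation carried out by the algorithm $\textsc{MarkedScheme}(\id j,m)$. Let $\mathfrak U\subseteq K[C]$ be the ideal generated by all the $P^{(\alpha,i)}_\gamma$. A specialization of $C$ satisfies the criterion of the previous paragraph exactly when it lies in the vanishing locus of $\mathfrak U$, so $K$-points of $\Spec K[C]/\mathfrak U$ correspond bijectively to $[\mathcal P(\id j),m]$-marked bases, i.e. to $K$-points of $\Mf_{\mathcal P(\id j),m}$. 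To promote this to an isomorphism of schemes, observe that after base change to $K[C]/\mathfrak U$ every critical product reduces to $0$, hence $\mathcal N(\id j)$ is, in each degree, a free $K[C]/\mathfrak U$-module basis of $(K[C]/\mathfrak U)[x_1,\dots,x_n]/\widetilde I$; this exhibits the universal quotient as a flat family over $K[C]/\mathfrak U$ with the prescribed affine Hilbert polynomial, hence gives a morphism $\Spec K[C]/\mathfrak U\to\hilbp$ factoring through $\Mf_{\mathcal P(\id j),m}$, while the canonical marked basis of the universal family over $\Mf_{\mathcal P(\id j),m}$ provides the inverse morphism.

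The main obstacle is the step of the second paragraph: proving that the reduction relation of a $[\mathcal P(\id j),m]$-marked set is Noetherian and that a \emph{finite} collection of critical products governs the entire infinite family of direct-sum conditions of Lemma \ref{lemma:fondamentale}. This is exactly where the quasi-stable (Pommaret) hypothesis and the careful handling of the affine degree parameter $m$ are needed; once the criterion is available, the passage to the generic coefficients and the flatness check of the final paragraph are essentially bookkeeping. Full details of all these steps are in \citep[Section 6]{BCR}.
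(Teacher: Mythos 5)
The paper offers no proof of this statement beyond the citation to \citep[Section 6]{BCR}, and your sketch is a faithful reconstruction of the construction there: generic marked set with indeterminate coefficients, Noetherianity of the Pommaret-driven reduction, a finite Buchberger-type criterion on non-multiplicative prolongations (plus the low-degree conditions forced by $m$), and the flatness/universal-property argument identifying $\Spec K[C]/\mathfrak U$ with $\Mf_{\mathcal P(\id j),m}$. This is essentially the same approach as the cited source, so nothing further is needed.
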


In next statement, we denote by $X$ a point of $\hilbp$ and let $\Mf_{\mathcal P({\mathfrak j}),m}$ be a $[\mathcal P({\mathfrak j}),m]$-marked scheme containing $X$ up to a suitable change of coordinates, where $m\geq \rho-1$ and $\rho$ is the satiety of $\mathfrak j$.

\begin{corollary}\label{cor:tangente}
The Zariski tangent space to $\hilbp$ at $X$ is equal to the Zariski tangent space to $\Mf_{\mathcal P({\mathfrak j}),m}$ at $X$ and it can be explicitly computed by marked bases techniques.
\end{corollary}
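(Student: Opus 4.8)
The plan is to deduce Corollary~\ref{cor:tangente} directly from Theorem~\ref{th:cose marcate} together with the general fact that a Zariski tangent space is a local (indeed infinitesimal) invariant. First I would recall that the Zariski tangent space to a scheme $Y$ at a $K$-point $y$ depends only on an (arbitrarily small) open neighbourhood of $y$ in $Y$: it is the $K$-dual of $\mathfrak m_y/\mathfrak m_y^2$, where $\mathfrak m_y\subset\mathcal O_{Y,y}$ is the maximal ideal of the local ring, and $\mathcal O_{Y,y}$ is unchanged when we restrict to an open subscheme containing $y$. Equivalently, one may use the functorial description $T_yY=\mathrm{Hom}_{K\text{-schemes},\,y}\bigl(\Spec K[\varepsilon]/(\varepsilon^2),Y\bigr)$, and a morphism from $\Spec K[\varepsilon]/(\varepsilon^2)$ centred at $y$ factors through any open subscheme containing $y$ because $\Spec K[\varepsilon]/(\varepsilon^2)$ has a single point, which maps to $y$.

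Next I would invoke the hypotheses: since $m\geq\rho-1$ with $\rho$ the satiety of $\mathfrak j$, Theorem~\ref{th:cose marcate} guarantees that, after the suitable change of coordinates bringing $X$ into the marked family, $\Mf_{\mathcal P(\mathfrak j),m}$ is an \emph{open} subscheme of $\hilbp$ containing $X$. Applying the locality observation of the previous paragraph with $Y=\hilbp$, $y=X$ and the open subscheme $\Mf_{\mathcal P(\mathfrak j),m}$, we get a canonical identification $T_X\hilbp=T_X\Mf_{\mathcal P(\mathfrak j),m}$; note that a change of coordinates is an automorphism of $\hilbp$, so it induces an isomorphism on tangent spaces and does not affect the statement.

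Finally I would address the computability claim. By Theorem~\ref{th:schema marcato} we have the explicit presentation $\Mf_{\mathcal P(\mathfrak j),m}=\Spec\bigl(K[C]/\mathfrak U\bigr)$, where $C$ is the finite set of coefficient-parameters of a generic $[\mathcal P(\mathfrak j),m]$-marked basis and $\mathfrak U$ is the ideal produced by \citep[Algorithm $\textsc{MarkedScheme}(\mathfrak j,m)$]{BCR}. Writing $X$ in these coordinates as a point $c_0\in\Spec(K[C]/\mathfrak U)$, i.e.\ as a maximal ideal $\mathfrak m\supseteq\mathfrak U$ of $K[C]$, the Zariski tangent space is the kernel of the Jacobian matrix of a generating set of $\mathfrak U$ evaluated at $c_0$ — equivalently $\bigl(\mathfrak m/(\mathfrak m^2+\mathfrak U)\bigr)^\vee$. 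Both $C$ and a generating set of $\mathfrak U$ are produced explicitly by the algorithm, so this Jacobian can be written down and its kernel computed by linear algebra over $K$; this is what "computed by marked bases techniques" means.

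The only point requiring care — and the place where the argument could go wrong if the hypotheses were weakened — is the openness of $\Mf_{\mathcal P(\mathfrak j),m}$ in $\hilbp$: without the assumption $m\geq\rho-1$, Theorem~\ref{th:cose marcate} only provides a locally closed embedding, and for a non-open locally closed subscheme the tangent space of the subscheme can be strictly smaller than that of the ambient scheme. So the main (and essentially only) substantive step is to verify that the standing hypotheses of the corollary — $m\geq\rho-1$ and $X\in\Mf_{\mathcal P(\mathfrak j),m}$ after a change of coordinates — put us exactly in the open case of Theorem~\ref{th:cose marcate}; everything else is the formal locality of tangent spaces plus the explicit presentation of Theorem~\ref{th:schema marcato}.
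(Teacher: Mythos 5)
Your proposal is correct and follows exactly the paper's argument: openness of $\Mf_{\mathcal P({\mathfrak j}),m}$ in $\hilbp$ from Theorem~\ref{th:cose marcate} (using $m\geq\rho-1$) gives the equality of tangent spaces by locality, and the explicit presentation $\Spec(K[C]/\mathfrak U)$ from Theorem~\ref{th:schema marcato} gives computability via the Jacobian of the generators of $\mathfrak U$. You merely spell out in more detail the two facts the paper states in one line.
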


\begin{proof} It is enough to observe that $\Mf_{\mathcal P({\mathfrak j}),m}$ is an open subscheme of $\hilbp$ due to Theorem \ref{th:cose marcate} and that the Zariski tangent space to $\Mf_{\mathcal P({\mathfrak j}),m}$ at $X$ can be computed by the generators of the ideal $\mathfrak U$ of Theorem \ref{th:schema marcato}.
\end{proof}

\begin{remark}\label{rem:tangente}
Concerning an effective computation of the Zariski tangent space to a marked scheme $\Mf_{\mathcal P({\mathfrak j}),m}$ at the origin $\mathfrak j$, we can use the same techniques that are described in \citep[Sections 3 and 4]{LR} for the so-called Gr\"obner strata. If we want to compute the Zariski tangent space at another point, we perform the change of coordinates that brings this point in the origin, as usual.
\end{remark}

We end this section with the following result that is inferred from \citep{FR} and is analogous to results contained in \citep{LR} for the homogeneous case. We first need to describe an adjustment to the affine case of the notion of segment (for details on segments see \citep{CLMR}).

\begin{definition}\label{segmento affine}
Let $\id j$ be a strongly stable ideal in $R$, $m$ a positive integer. The ideal $\id j$ is an {\em affine $m$-segment} if there is a weight vector $\omega \in \mathbb N^n$ such that for every $x^\alpha \in B_{\id j}$, $\deg_\omega(x^\alpha)>\deg_\omega(x^\gamma)$ for every $x^\gamma\in \cN(\id j)_{\leq t}$, with $t=\max\{m,\vert\alpha\vert\}$.
\end{definition}

\begin{theorem}\label{th:connessione}
If $\id j\subset R$ is an affine $m$-segment, then every irreducible component $\mathcal M$ of $\Mf_{\mathcal P({\mathfrak j}),m}$ contains  $\id j$, hence $\Mf_{\mathcal P({\mathfrak j}),m}$ is a connected scheme. 
If moreover the point corresponding to $\id j$ is smooth on $\mathcal M$, then $\mathcal M$ is isomorphic to an affine space.
\end{theorem}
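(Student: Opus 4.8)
The plan is to mimic, in the affine setting, the classical argument that a homogeneous ideal which is a "segment" with respect to a weight vector lies in the closure of every component of its marked scheme, using the weight vector $\omega$ to build a $\mathbb{G}_m$-action (or, more precisely, a one-parameter degeneration) on $\Mf_{\mathcal{P}(\mathfrak{j}),m}$ whose unique fixed point is $\mathfrak{j}$. First I would set up the torus action: extend $\omega\in\mathbb{N}^n$ to act on $R=K[x_1,\dots,x_n]$ by $t\cdot x_i=t^{\omega_i}x_i$, and record how it acts on the coefficients of a $[\mathcal{P}(\mathfrak{j}),m]$-marked basis. For a marked polynomial $f_\alpha=x^\alpha+\sum_{x^\gamma\in\mathcal{N}(\mathfrak{j})_{\leq t}} c_{\alpha,\gamma}\,x^\gamma$, applying $t$ and renormalising so that the head term stays monic multiplies each $c_{\alpha,\gamma}$ by $t^{\deg_\omega(x^\gamma)-\deg_\omega(x^\alpha)}$. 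The affine $m$-segment hypothesis says precisely that $\deg_\omega(x^\gamma)-\deg_\omega(x^\alpha)<0$ for every coefficient parameter, so $t\to\infty$ (equivalently $t\to 0$ after inverting) sends every $c_{\alpha,\gamma}$ to $0$, i.e.\ sends the ideal $I$ to $\mathfrak{j}$.

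Next I would check that this action is well-defined on the scheme $\Mf_{\mathcal{P}(\mathfrak{j}),m}=\Spec(K[C]/\mathfrak{U})$ of Theorem \ref{th:schema marcato}, not merely on the ambient affine space $\Spec K[C]$. For this I would invoke the description of $\Mf_{\mathcal{P}(\mathfrak{j}),m}$ via Lemma \ref{lemma:fondamentale}: $I$ lies in the marked family iff $R_{\leq t}=I_{\leq t}\oplus\langle\mathcal{N}(\mathfrak{j})_{\leq t}\rangle$ for all $t\geq m$, and this direct-sum condition is manifestly preserved by the graded automorphism $t\cdot(-)$ of $R$ because $\langle\mathcal{N}(\mathfrak{j})_{\leq t}\rangle$ is spanned by monomials (hence $\omega$-homogeneous) and $\omega$-degree-bounded subspaces are preserved. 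Equivalently, one argues that the ideal $\mathfrak{U}$ of relations is $\omega$-homogeneous in the weights induced on $C$, which follows from the fact that $\mathfrak{U}$ is generated by the "reduction to zero" relations produced by $\textsc{MarkedScheme}(\mathfrak{j},m)$ and each such relation equates coefficients of a fixed term, hence of a fixed $\omega$-degree. Therefore the $\mathbb{G}_m$-action on $\Spec K[C]$ restricts to $\Mf_{\mathcal{P}(\mathfrak{j}),m}$, and since $\lim_{t}t\cdot I=\mathfrak{j}$ for every point $I$, the point $\mathfrak{j}$ lies in the closure of the orbit of every point; in particular it lies in every irreducible component $\mathcal{M}$, and any two components meet at $\mathfrak{j}$, so the scheme is connected.

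For the last assertion, suppose $\mathfrak{j}$ is a smooth point of a component $\mathcal{M}$. The $\mathbb{G}_m$-action fixes $\mathfrak{j}$ and acts on the tangent space $T_{\mathfrak{j}}\mathcal{M}$, and by the above all weights of this action are strictly negative (they are differences $\deg_\omega(x^\gamma)-\deg_\omega(x^\alpha)<0$ among the coordinate directions $c_{\alpha,\gamma}$). A smooth affine variety with a $\mathbb{G}_m$-action having a single fixed point at which all tangent weights have the same sign is equivariantly isomorphic to its tangent space, i.e.\ to an affine space — this is the Bialynicki-Birula / attractive-fixed-point argument: the action contracts $\mathcal{M}$ onto $\mathfrak{j}$, so $\mathcal{M}=\Spec$ of the coordinate ring, which is a polynomial ring generated by the weight-homogeneous functions, and smoothness forces the number of these generators to equal $\dim_{\mathfrak{j}}\mathcal{M}$. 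I would phrase this concretely in terms of $K[C]/\mathfrak{U}$: restricted to $\mathcal{M}$, the coordinate ring is positively graded with $\mathcal{M}$-degree-$0$ part equal to $K$, so the graded Nakayama lemma gives a minimal generating set in bijection with a basis of the cotangent space $\mathfrak{m}/\mathfrak{m}^2$ at $\mathfrak{j}$, and smoothness means these generators are algebraically independent, whence $\mathcal{M}\cong\mathbb{A}^{\dim\mathcal{M}}$.

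The main obstacle I anticipate is the second step: verifying rigorously that the $\mathbb{G}_m$-action descends to $\Mf_{\mathcal{P}(\mathfrak{j}),m}$, i.e.\ that $\mathfrak{U}$ is homogeneous for the $\omega$-induced grading on the parameters $C$. The homogeneous analogue is handled in \citep{LR,FR}, and the affine version should follow from the explicit structure of $\textsc{MarkedScheme}(\mathfrak{j},m)$ in \citep{BCR}, together with the truncation bound $t=\max\{m,|\alpha|\}$ built into Definition \ref{marked affini}, which is exactly what makes the segment inequality apply uniformly to all coefficient parameters; but one must be careful that the affine reductions (which are not degree-homogeneous as polynomial operations) still produce $\omega$-homogeneous relations once the weight of each parameter $c_{\alpha,\gamma}$ is set to $\deg_\omega(x^\gamma)-\deg_\omega(x^\alpha)$. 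This is really a bookkeeping point, but it is where the affine case genuinely differs from the graded case treated in \citep{LR}.
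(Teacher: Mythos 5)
Your proposal is correct and follows essentially the same route as the paper: the paper's proof consists of citing \citep[Corollary 2.7]{FR} for the fact that $\Mf_{\mathcal P({\mathfrak j}),m}$ is an $\omega$-cone with vertex in $\id j$, which is precisely the $\mathbb G_m$-contraction (with all coefficient parameters $c_{\alpha,\gamma}$ of strictly negative weight $\deg_\omega(x^\gamma)-\deg_\omega(x^\alpha)$) that you construct by hand, and the consequences you draw — every component is a subcone through the vertex, hence connectedness, and a cone smooth at its vertex is an affine space via graded Nakayama — are exactly what the paper summarizes as ``the thesis follows.'' The descent of the action to $\Spec(K[C]/\mathfrak U)$, which you rightly flag as the only delicate point in the affine setting, is the content of the cited result from \citep{FR}.
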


\begin{proof}
Let $\omega \in \mathbb N^n$ be a weight vector with respect to whom  $\mathfrak j$ is an affine $m$-segment. Then, $\Mf(\id j,m)$ is a $\omega$-cone with vertex in $\mathfrak j$ by \citep[Corollary 2.7]{FR} and the thesis follows.  
\end{proof}


\section{Gorenstein points and double-generic initial ideals}\label{sec:gore}

In this section, we highlight a relation between the locus of Gorenstein points defined by graded Gorenstein $K$-algebras with Hilbert function of type $(1,n,n,1)$ and the notion of double-generic initial ideal. 

A Gorenstein scheme $X\in \hilbp$ is a scheme such that the stalk of the ideal sheaf in every point $x\in X$ is Gorenstein. Recall that the locus of points in $\hilbp$ representing Gorenstein schemes is an open subset \citep{Stoia,GM}. 

We will consider zero-dimensional Gorenstein schemes in an open neighborhood, which can be studied by our affine techniques \citep{BCR}. Hence, following \citep[Definition 2.1]{IK99} and \citep{BH}, we now recall some main notions and already known results for Artin $K$-algebras. 

Let $A$ be a local Artin $K$-algebra and $M$ its maximal ideal. The {\em socle} of $A$ is the annihilator $Soc(A):=(0:_A M)=\{h\in A \ \vert \ hM=0\}$.  Then, $A$ is called {\em Gorenstein} if $\dim_K Soc(A)=1$. An Artin $K$-algebra is {\em Gorenstein} if its localization at every maximal ideal is a Gorenstein (local) $K$-algebra. The {\em socle degree} of a graded Artin Gorenstein $K$-algebra $A$ is the maximum degree $j$ such that $A_j\neq 0$. 

The following result is due to Macaulay, as observed in \citep{IK99} which we refer to.

\begin{lemma}\citep[Definiton 1.11 and Lemma 2.12]{IK99} \label{lemma:apolarity}
There is a bijection between the hypersurfaces of degree $j$ in $\mathbb P^n_K$ and the set of graded Artin Gorenstein quotient rings of $R$ of socle degree $j$. This correspondence associates to a form $F$ of degree $j$ the quotient $A_F:=R/Ann(F)$, where $Ann(F)$ is computed by apolarity.
\end{lemma}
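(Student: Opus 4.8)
The statement to prove is the Macaulay correspondence (Lemma~\ref{lemma:apolarity}): a bijection between degree-$j$ forms, or rather hypersurfaces of degree $j$ in $\mathbb{P}^n_K$, and graded Artin Gorenstein quotients of $R$ of socle degree $j$, given by $F\mapsto A_F=R/\mathrm{Ann}(F)$. The plan is to set up the apolarity (inverse-system) pairing and then verify the two directions of the correspondence.

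First I would fix the apolarity action: let $R=K[x_1,\dots,x_n]$ act on the divided-power (or, since $\mathrm{char}(K)\neq 2,3$ and in the socle-degree range we care about we may as well think polynomial) algebra $\mathcal{D}=K[X_1,\dots,X_n]$ by letting $x_i$ act as $\partial/\partial X_i$, extended so that $R$ acts by contraction degree by degree. For a form $F\in\mathcal{D}_j$ define $\mathrm{Ann}(F)=\{g\in R : g\circ F=0\}$, a homogeneous ideal of $R$. The first key step is to show $A_F=R/\mathrm{Ann}(F)$ is a graded Artin $K$-algebra: $\mathrm{Ann}(F)$ contains $\mathfrak{m}^{j+1}$ (every derivative of order $>j$ of $F$ vanishes) so $A_F$ is Artin, and it is graded since $\mathrm{Ann}(F)$ is homogeneous. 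The second key step is the Gorenstein / socle-degree computation: one identifies $(A_F)_i$ with the subspace $R_i\circ F\subseteq \mathcal{D}_{j-i}$ via the pairing, so $(A_F)_j$ is one-dimensional (spanned by the image of any monomial of degree $j$ not annihilating $F$, paired against $F$ itself) and $(A_F)_i=0$ for $i>j$; hence the socle degree is $j$ and, because the pairing $R_i\times R_{j-i}\to K$, $(g,h)\mapsto (gh)\circ F$ (a number in degree~$0$) is a perfect pairing on $A_F$, one concludes $\dim_K\mathrm{Soc}(A_F)=\dim_K(A_F)_j=1$, i.e.\ $A_F$ is Gorenstein. The perfectness of this pairing — equivalently, that $A_F$ is a Poincar\'e-duality algebra — is the technical heart and is exactly the content of Macaulay's theorem; I would either cite it from \citep{IK99} or reprove it by noting $\mathrm{Ann}(F)$ is unmixed of the right colength and invoking the graded local duality characterisation of Gorenstein Artin algebras.

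For the reverse direction and injectivity: given a graded Artin Gorenstein quotient $A=R/I$ of socle degree $j$, I would recover a form by dualising. The graded $K$-dual $A^\vee=\bigoplus_i \mathrm{Hom}_K(A_i,K)$ is a graded $R$-module (finitely generated, since $A$ is Artin), and the Gorenstein condition $\dim_K\mathrm{Soc}(A)=1$ forces $A^\vee$ to be cyclic, generated in degree $j$ by a single element; realising $A^\vee$ inside $\mathcal{D}$ identifies that generator with a form $F\in\mathcal{D}_j$, well-defined up to a nonzero scalar, and one checks $I=\mathrm{Ann}(F)$, so $A\cong A_F$. Two forms $F,F'$ give the same ideal iff $\mathrm{Ann}(F)=\mathrm{Ann}(F')$, and a short argument with the pairing shows this happens iff $F'=cF$ for some $c\in K^\times$; since a hypersurface of degree $j$ in $\mathbb{P}^n_K$ is precisely a form up to nonzero scalar, this yields the claimed bijection with hypersurfaces. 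The main obstacle is purely the Macaulay duality statement — that the contraction pairing on $R/\mathrm{Ann}(F)$ is perfect and, dually, that a socle-dimension-one graded Artin quotient has cyclic $K$-dual — and since the excerpt attributes the result to Macaulay and cites \citep[Definition~1.11 and Lemma~2.12]{IK99}, the cleanest route is to reduce the statement to those references rather than redo the duality theory from scratch.
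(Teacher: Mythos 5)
The paper does not prove this lemma at all: it is quoted verbatim as a known result of Macaulay, with the proof delegated to \citep[Definition 1.11 and Lemma 2.12]{IK99}. Your sketch is a correct outline of the standard argument behind that citation (contraction pairing, $\mathfrak m^{j+1}\subseteq\mathrm{Ann}(F)$ for Artin-ness, perfectness of the pairing $A_i\times A_{j-i}\to K$ for the Gorenstein property, and cyclicity of the graded dual for the inverse direction), and your closing remark --- that the cleanest route is to reduce to the cited references rather than redo Macaulay duality --- is exactly what the paper does. The only point worth flagging is your aside that one ``may as well think polynomial'' instead of using divided powers: this identification needs the factorials up to $j$ to be invertible, which holds in the paper's application ($j=3$, $\mathrm{char}(K)\neq 2,3$) but not for the general statement of the lemma.
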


\begin{theorem}\label{th:irreducible} 
{\rm (\citep[Theorem 3.31]{IE1978}, \citep[Theorem I]{I84}, \citep[Theorem 3.1]{CN10})} The set of cubic hypersurfaces, which determine all the graded Artin Gorenstein $K$-algebras with Hilbert function $(1,n,n,1)$ by apolarity, is a non-empty irreducible subset of $\mathbb P(R_3)$. 
\end{theorem}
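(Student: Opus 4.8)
The plan is to show that the set described in the statement is a nonempty Zariski-open subset of the projective space $\mathbb P(R_3)$, and then to conclude by irreducibility of projective space.

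First I would translate the condition on the Hilbert function into a rank condition. By Lemma~\ref{lemma:apolarity}, for every nonzero cubic $F\in R_3$ the apolar algebra $A_F=R/Ann(F)$ is a graded Artin Gorenstein $K$-algebra of socle degree $3$; hence, by the well-known symmetry of the Hilbert function of a graded Artin Gorenstein algebra, $H(A_F)$ has the shape $(1,h,h,1)$ with $h=\dim_K (A_F)_1=n-\dim_K Ann(F)_1$. Now $Ann(F)_1$ is the kernel of the (first) catalecticant map $c_F$ of $F$, which sends a linear form $\ell$ to the quadric obtained by contracting (differentiating) $F$ by $\ell$, so $h=\mathrm{rk}(c_F)\le n$; thus $A_F$ has Hilbert function $(1,n,n,1)$ exactly when $c_F$ is injective, i.e. of maximal rank $n$. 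By definition and the bijection of Lemma~\ref{lemma:apolarity}, the set of cubics in the statement therefore equals
\[
U:=\{\,[F]\in\mathbb P(R_3)\ :\ \mathrm{rk}(c_F)=n\,\}.
\]

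Next I would verify that $U$ is open and nonempty. After fixing bases of the source and target of $c_F$, the entries of the matrix representing $c_F$ are linear forms in the coefficients of $F$; hence the complement $\{\mathrm{rk}(c_F)<n\}$ is the common zero locus of the $n\times n$ minors of this matrix, a closed subset of $\mathbb P(R_3)$, so $U$ is open. For nonemptiness, since $\mathrm{char}(K)\neq 2,3$ the Fermat cubic $F_0=x_1^3+\cdots+x_n^3$ has $Ann(F_0)_1=0$: each $x_i$ contracts $x_j^3$ to $0$ for $j\neq i$ and $x_i^3$ to a nonzero scalar multiple of $x_i^2$, so the images $c_{F_0}(x_1),\dots,c_{F_0}(x_n)$ are linearly independent. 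Hence $[F_0]\in U$; equivalently, the generic cubic has maximal catalecticant rank, so $U$ is in fact dense. Since $\mathbb P(R_3)$ is irreducible and a nonempty open subscheme of an irreducible scheme is irreducible, $U$ is a nonempty (dense, open) irreducible subset of $\mathbb P(R_3)$, as required.

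The only delicate point is the reduction carried out in the first step: it is crucial that $A_F$ is Gorenstein \emph{a priori}, so that the symmetry of its Hilbert function forces $h_2=h_1$ and the whole Hilbert function is governed by the single integer $h_1=\mathrm{rk}(c_F)$, and that the socle degree of $A_F$ equals $3$ for every nonzero cubic $F$, so that $h_0=h_3=1$ automatically. Once this is granted, the remaining ingredients — openness of a maximal-rank condition on a matrix of linear forms, and irreducibility of a nonempty open subset of a projective space — are routine.
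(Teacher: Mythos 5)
The paper offers no proof of this theorem --- it is quoted from \citep{IE1978}, \citep{I84} and \citep{CN10} --- so the only comparison available is with the standard argument in those references, and your proof is exactly that argument: the locus $\mathrm{Gor}(1,n,n,1)$ is the complement in $\mathbb P(R_3)$ of the vanishing of the maximal minors of the first catalecticant matrix, hence open, and it is nonempty because the Fermat cubic (or any sum of $n$ independent cubes, which exists since $\mathrm{char}(K)\neq 2,3$ makes the degree-$3$ apolarity pairing perfect) lies in it. The two ingredients you flag as delicate --- socle degree $3$ and the symmetry $h_1=h_2$ for the apolar algebra of a nonzero cubic --- are indeed exactly what Macaulay's correspondence (Lemma \ref{lemma:apolarity}) and the standard symmetry of Hilbert functions of graded Artin Gorenstein algebras provide, so the proof is complete and correct.
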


Let $\mathrm{Gor}(T)$ denote the subset of the projective space consisting of the hypersurfaces $F$ such that the Hilbert function of $A_F$ is a given function $T$. By Theorem \ref{th:irreducible}, $\mathrm{Gor}(T)$ can be embedded in a Hilbert scheme as an irreducible locally closed subset and we denote by $\overline{\mathrm{Gor}}(T)$ its closure. The following definitions and results are crucial in our study of $\overline{\mathrm{Gor}}(1,n,n,1)$.

\begin{definition} \citep{BCR-glin}
An irreducible closed subset $Y$ of a Hilbert scheme is called a {\em GL-stable subset} if is invariant under the action of the general linear group.
\end{definition}

Every GL-stable subset $Y$ of a Hilbert scheme contains at least one point corresponding to a strongly stable ideal. Given a term order, among the strongly stable ideals that define points of $Y$, we can find a special strongly stable ideal which is the saturation of the generic initial ideal of the generic (and general) point of $Y$  \citep[Proposition 4(b)]{BCR-glin}. 

\begin{definition} \citep[Definition 5]{BCR-glin}
The saturation of the generic initial ideal of the generic (and general) point of a GL-stable subset $Y$  is called the {\em double-generic initial ideal of $Y$} and is denoted by $G_Y$.
\end{definition}

The notion of double-generic initial ideal has been introduced and investigated for the first time in \citep{BCR-glin}, also in the more general setting of Grassmannian, with the terminology of extensors. 

\begin{proposition}\label{prop:GLstable}
$\overline{\mathrm{Gor}}(1,n,n,1)\subseteq \hilb^n_{2n+2}$ is a GL-stable subset, in particular it has a double-generic initial ideal.
\end{proposition}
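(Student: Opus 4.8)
The plan is to verify the two assertions in turn: first that $\overline{\mathrm{Gor}}(1,n,n,1)$ is a well-defined irreducible closed subset of $\hilb^n_{2n+2}$, and then that it is invariant under the $GL_{n+1}$-action; the claim about the double-generic initial ideal is then automatic from the discussion preceding the statement. For the first part, recall from Lemma \ref{lemma:apolarity} that graded Artin Gorenstein quotients of $R$ of socle degree $3$ are in bijection with cubic forms $F$, and for such $F$ the Hilbert function of $A_F$ is automatically symmetric of the shape $(1,h_1,h_1,1)$ with $h_1\leq n$. The locus $\mathrm{Gor}(1,n,n,1)$ corresponds exactly to those cubics $F$ for which $h_1=n$, i.e.\ for which the quadratic partial derivatives of $F$ span the full space $R_1^{\vee}$ (equivalently, $A_F$ has embedding dimension $n$); this is an open, nonempty condition inside $\PP(R_3)$, so $\mathrm{Gor}(1,n,n,1)$ is irreducible, and by Theorem \ref{th:irreducible} it embeds in $\hilb^n_{2n+2}$ as an irreducible locally closed subset. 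Here the length count $1+n+n+1 = 2n+2$ pins down the ambient Hilbert scheme. Taking closure gives the irreducible closed subset $\overline{\mathrm{Gor}}(1,n,n,1)\subseteq \hilb^n_{2n+2}$.

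Next I would establish $GL$-invariance. The group $GL_{n+1}$ acts on $R$ by linear substitutions and hence on $\PP(R_3)$; the key point is that apolarity is compatible with this action, in the sense that $Ann(g\cdot F) = g\cdot Ann(F)$ for $g\in GL_{n+1}$ (the contraction pairing between $R$ and the divided-power/polynomial dual is $GL$-equivariant up to the contragredient action, which is enough to make the annihilator ideals transform by the induced action on $R$). Consequently $A_{g\cdot F}\cong A_F$ as graded $K$-algebras, so the Hilbert function is preserved and $g\cdot \mathrm{Gor}(1,n,n,1) = \mathrm{Gor}(1,n,n,1)$. Passing through the embedding into $\hilb^n_{2n+2}$ of Theorem \ref{th:irreducible} — which is itself $GL$-equivariant, since it is induced by sending $F$ to the subscheme $\Proj(A_F)$ and the $GL$-action on the Hilbert scheme is by change of coordinates on $\PP^n$ — we get that $\mathrm{Gor}(1,n,n,1)$ is $GL$-stable as a subset of the Hilbert scheme. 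Since the $GL$-action on $\hilb^n_{2n+2}$ is by automorphisms, hence continuous, the closure of a $GL$-stable set is again $GL$-stable, so $\overline{\mathrm{Gor}}(1,n,n,1)$ is $GL$-stable.

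Finally, irreducibility plus closedness plus $GL$-stability is exactly the definition of a $GL$-stable subset recalled above, so the ``in particular'' follows immediately: by \citep[Proposition 4(b)]{BCR-glin} such a subset contains a point given by a strongly stable ideal and has a well-defined double-generic initial ideal $G_{\overline{\mathrm{Gor}}(1,n,n,1)}$ in the sense of \citep[Definition 5]{BCR-glin}.

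I expect the only genuinely delicate point to be the equivariance of apolarity, i.e.\ checking carefully that the annihilator ideal of a cubic form transforms correctly under $GL_{n+1}$ — one must be a little careful about whether $g$ or its inverse transpose acts on the dual variables — but once the pairing is set up as in \citep[Definition 1.11]{IK99} this is a formal verification and carries no real obstruction. Everything else (the openness of the embedding-dimension condition, the length count $2n+2$, continuity of the $GL$-action on the Hilbert scheme, and stability of closures under a group action) is routine.
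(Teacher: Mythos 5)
Your proposal is correct and follows essentially the same route as the paper: the paper's proof is a one-line appeal to Theorem \ref{th:irreducible} for closed irreducibility, with $GL$-invariance dismissed as holding ``by construction.'' You have simply spelled out the details that the paper leaves implicit (the length count $2n+2$, the $GL$-equivariance of apolarity, and the fact that the closure of a $GL$-invariant set is $GL$-invariant), all of which are accurate.
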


\begin{proof}
Theorem \ref{th:irreducible} implies that the closure $\overline{\mathrm{Gor}}(1,n,n,1)$ in $\hilb^n_{2n+2}$ is a closed irreducible subset, hence it is GL-stable because it is also invariant under the action of the general linear group, by construction.
\end{proof}

Next result contains some of the main properties of a double-generic initial ideal. First, we need to recall the following definition. 

\begin{definition}\label{def:ordinamento} \citep[Definition 6]{BCR-glin}
Let $J$ and $H$ be monomial ideals in $S$ such that $S/J$ and $S/H$ have the same Hilbert polynomial $p(t)$. Let $r$ be the Gotzmann number of $p(t)$ and consider the set of generators $B_{J_{\geq r}}=\{\tau_1,\dots,\tau_q\}$ and $B_{I_{\geq r}}=\{\sigma_1,\dots,\sigma_q\}$ ordered by a term order $>$, where $q=\binom{n+r}{n}-p(r)$. We write $J >\!\!> H$ if $\tau_i \geq \sigma_i$ for every $i\in\{1,\dots,q\}$.
\end{definition}

\begin{lemma}\label{lemma:dgii}\citep[Propositions 2 and 3, Definition 5, Theorems 3 and 4, Remark 5]{BCR} Let $>$ be a term order, $Y$ a GL-stable subset of $\hilbp$ and $G_Y$ its double-generic initial ideal. 
\begin{itemize}
\item[(i)] For every ideal $I$ defining a point of $Y$, $\mathrm{gin}(I)$ and $\mathrm{in}(I)$ define points of $Y$.
\item[(ii)] There exists the maximum among all the Borel ideals defining points of $Y$ with respect to the partial order $>\!\!>$, and this maximum is $G_Y$.
\item[(iii)] There is a non-empty open subset $V$ of $Y$ such that $\mathrm{gin}(I)=\mathrm{in}(I)=G_Y$ for every saturated ideal $I$ defining a point in $V$. 
\end{itemize}
\end{lemma}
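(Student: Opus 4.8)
I would prove the three parts by exploiting how (generic) initial ideals behave along the $GL$-orbits lying in $Y$, using only the flatness of Gr\"obner and generic-initial degenerations together with the fact that $Y$ is closed, irreducible and $GL$-stable; the detailed arguments are those of \citep{BCR-glin}. \emph{Part (i):} fix an ideal $I$ defining a point of $Y$ and pick a weight vector $w=(w_0,\dots,w_n)$ for which the associated degeneration computes $\mathrm{in}(I)$ up to the Gotzmann degree of $p(t)$. The Gr\"obner degeneration produces a flat family over $\mathbb{A}^1$ whose member at each $t\neq 0$ is $\mathrm{diag}(t^{w_0},\dots,t^{w_n})\cdot I$ — a point of $Y$, since $\mathrm{diag}(t^{w_0},\dots,t^{w_n})\in GL$ and $Y$ is $GL$-stable — and whose flat limit at $t=0$ is $[\mathrm{in}(I)]$. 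The induced morphism $\mathbb{A}^1\to\hilbp$ sends the dense set $\mathbb{A}^1\setminus\{0\}$ into the closed subscheme $Y$, hence sends $0$ there too, so $[\mathrm{in}(I)]\in Y$. Since $\mathrm{in}(g\cdot I)=\mathrm{gin}(I)$ for $g$ in a nonempty open subset of $GL$ (Galligo's theorem) and such a $g\cdot I$ still defines a point of $Y$, applying the above to $g\cdot I$ gives $[\mathrm{gin}(I)]\in Y$.

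\emph{Part (ii):} let $\mathcal{B}_Y$ be the set of Borel ideals defining points of $Y$. A generic initial ideal is Borel-fixed, so (i) gives $\mathrm{gin}(I)\in\mathcal{B}_Y$ for every $[I]\in Y$, whence $\mathcal{B}_Y\neq\emptyset$; and $G_Y\in\mathcal{B}_Y$, being the saturation of a Borel ideal (hence Borel) and defining a point of $Y$ by \citep[Proposition~4(b)]{BCR-glin}. The core step is $G_Y>\!\!>J$ for every $J\in\mathcal{B}_Y$: a Borel ideal is its own generic initial ideal, so $J=\mathrm{gin}(J)$, and it remains to invoke the semicontinuity of the generic initial ideal in the universal family over $Y$ — that the generic initial ideal of the generic fibre $>\!\!>$-dominates that of every fibre, which is the substance of \citep[Theorems~3 and 4]{BCR-glin}. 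This yields $\mathrm{gin}(\mathcal{I}_\eta)>\!\!>J$ at the generic point $\eta$; since $G_Y=\mathrm{gin}(\mathcal{I}_\eta)^{\sat}$ shares all generators with $\mathrm{gin}(\mathcal{I}_\eta)$ in degrees $\geq r$ (the satiety of $\mathrm{gin}(\mathcal{I}_\eta)$ being at most the Gotzmann number $r$ of $p(t)$) and $>\!\!>$ only inspects such generators (Definition~\ref{def:ordinamento}), we get $G_Y>\!\!>J$. Hence $G_Y$ is the $>\!\!>$-maximum of $\mathcal{B}_Y$.

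\emph{Part (iii):} let $V$ be the locus of points of $Y$ where the generic initial ideal of the universal family attains the value $\mathrm{gin}(\mathcal{I}_\eta)$ it has at $\eta$: this is open by semicontinuity and dense because it contains $\eta$. Shrinking $V$ and using again that $Y$ is $GL$-stable, the plain initial ideal in the fixed ambient coordinates also attains its generic value $\mathrm{gin}(\mathcal{I}_\eta)$ on a dense open; so for every saturated $I$ with $[I]\in V$ one has $\mathrm{in}(I)=\mathrm{gin}(I)$, this monomial ideal agrees with $\mathrm{gin}(\mathcal{I}_\eta)$ in degrees $\geq r$, hence defines the point $[G_Y]$, and when $S/I$ is Cohen--Macaulay (as it is in the punctual case) it is in fact saturated, giving $\mathrm{in}(I)=\mathrm{gin}(I)=G_Y$.

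\textbf{Main obstacle.} The only genuinely non-formal ingredient is the semicontinuity used in (ii): one must verify that degenerating a general fibre of the universal family to an arbitrary fibre and then degenerating that fibre by a one-parameter torus (to compute its generic initial ideal) can never create a $>\!\!>$-larger monomial ideal — equivalently, that on the stratification of $\hilbp$ by value of the generic initial ideal, specialization only passes to $>\!\!>$-smaller strata. Carrying this out, along with the Gotzmann-number bookkeeping that makes $>\!\!>$-comparisons insensitive to saturations, is exactly the content developed in \citep{BCR-glin}.
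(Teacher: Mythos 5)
The paper offers no proof of this lemma at all: it is stated purely as a quotation of results from \citep{BCR-glin} (Propositions 2--3, Theorems 3--4, Remark 5 there), so there is no in-paper argument to compare yours against. Judged on its own terms, your sketch is a faithful reconstruction of what those results rest on. Part (i) is the standard and correct argument: the Gr\"obner degeneration is a flat family whose nonzero fibres are $GL$-translates of $I$, hence lie in the $GL$-stable set $Y$, and the flat limit lies in the closed set $Y$; Galligo then handles $\mathrm{gin}$. For part (ii) you correctly isolate the one genuinely non-formal ingredient --- that specialization within the universal family can only decrease the generic initial ideal with respect to $>\!\!>$ --- and defer it to the cited theorems, which is legitimate for a background lemma, and your bookkeeping with the Gotzmann number explaining why $>\!\!>$ is insensitive to saturation is the right point to make. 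Part (iii) is the weakest piece: the passage from ``$\mathrm{gin}$ attains its generic value on an open set'' to ``$\mathrm{in}$ in the \emph{fixed} coordinates also equals $G_Y$ on a nonempty open set'' is exactly where one must again use $GL$-stability (a general point of $Y$ is a generic translate of another point of $Y$, so its plain initial ideal is already a $\mathrm{gin}$), and you gesture at this without making it precise; also, the closing remark that Cohen--Macaulayness forces $I$ to be saturated is a non sequitur here, since the statement of (iii) already restricts to saturated ideals. Neither issue is a fatal gap given that the lemma is, in the paper as in your proposal, ultimately carried by the cited reference.
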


\section{Graded Artin Gorenstein $K$-algebras with Hilbert function $(1,7,7,1)$ define smoothable points} \label{GorSmooth}

In this section, we consider the Hilbert scheme $\hilb^7_{16}$ parameterizing zero-dimensional subschemes of $\PP^7_K$ of length $16$. Recall that, up to a generic change of coordinates, we can identify every point of $\hilb^7_{16}$ with an ideal in $R=K[x_1,\dots,x_7]$, not necessarily homogeneous. Hence, we consider the polynomial ring $R$ and the ideals in $R$ with affine Hilbert polynomial $p(t)=16$. A double-generic initial ideal $J$ will be considered in its {\em affine} version too, that is its dehomogenization $\mathfrak j:=J^a=B_J\cdot R$.

The lex-point of $\hilb^7_{16}$ corresponds to the following lex-segment ideal in $R$:
\[
\id j_{\mathrm{lex}}= (x_7,x_6,x_5,x_4,x_3,x_2,x_1^{16}).
\]
It is well-known that $\id j_{\mathrm{lex}}$ is a smooth point of the smoothable component $\mathcal R_{16}^7$ of dimension $7\cdot 16=112$, because the general point of $\mathcal R_{16}^7$ is a reduced scheme of $16$ distinct points.

We can compute the complete list of $561$ strongly stable ideals of $R$ lying in $\hilb^7_{16}$ by the algorithm described in \citep{CLMR} (and further developed and implemented in \citep{L} and generalized for quasi-stable ideals and Borel-fixed ideals in positive characteristic in \citep{Ber}). Among them, we focus on the following one:
\begin{align*}
\id j_G = &(x_7^2, x_7x_6, x_7x_5, x_7x_4, x_7x_3, x_7x_2, x_7x_1, x_6^2, x_6x_5, x_6x_4, x_6x_3, x_6x_2, x_6x_1, x_5^2, x_5x_4, x_5x_3, \\
&x_5x_2, x_5x_1, x_4^2, x_4x_3, x_4x_2, x_4x_1^2, x_3^3, x_3^2x_2, x_3^2x_1, x_3x_2^2, x_3x_2x_1, x_3x_1^2, x_2^3, x_2^2x_1, x_2x_1^2, x_1^4).
\end{align*}
By the constructive tools of \citep{BCR} and by theoretical results on the double-generic initial ideal, we now show that $\id j_G$ is the generic initial ideal w.r.t.~lex term order of a general ideal defining a graded (Artin) Gorenstein $K$-algebra with Hilbert function $(1,7,7,1)$.

\begin{theorem}\label{th:j_G}
The strongly stable ideal $\mathfrak j_G$ is the double-generic initial ideal of the GL-stable subset $\overline{\mathrm{Gor}}(1,7,7,1)$ w.r.t.~the lex order. In particular, it is the generic initial ideal w.r.t.~the lex order of a general point of $\mathrm{Gor}{(1,7,7,1)}$.
\end{theorem}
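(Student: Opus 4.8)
The plan is to identify the candidate ideal $\mathfrak j_G$ as $G_Y$ for $Y=\overline{\mathrm{Gor}}(1,7,7,1)$ by exploiting the characterization in Lemma~\ref{lemma:dgii}(ii): $G_Y$ is the unique maximum, with respect to the partial order $>\!\!>$, among all Borel (strongly stable) ideals defining points of $Y$. Since by the algorithm of \citep{CLMR} we have in hand the complete list of $561$ strongly stable ideals lying in $\hilb^7_{16}$, the strategy is to cut this list down to those that actually lie on $\overline{\mathrm{Gor}}(1,7,7,1)$, and then to check that $\mathfrak j_G$ dominates all of them in the $>\!\!>$ order. The first reduction is cheap: a strongly stable ideal $\mathfrak j'$ can define a point of $\overline{\mathrm{Gor}}(1,7,7,1)$ only if the affine Hilbert function of $R/\mathfrak j'$ is compatible with specializing from the generic Hilbert function $(1,7,7,1)$ of $\mathrm{Gor}(1,7,7,1)$ — concretely, only finitely many strongly stable $\mathfrak j'$ survive, and one lists them.

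Next I would pin down which of the surviving strongly stable ideals genuinely occur as $\mathrm{in}(I)$ (equivalently $\mathrm{gin}(I)$, by Lemma~\ref{lemma:dgii}(i)) for some $I$ on $\overline{\mathrm{Gor}}(1,7,7,1)$. The key point is that it suffices to show $\mathfrak j_G$ itself lies on $\overline{\mathrm{Gor}}(1,7,7,1)$ and that no strongly stable ideal strictly larger than $\mathfrak j_G$ in the $>\!\!>$ order can lie there: then Lemma~\ref{lemma:dgii}(ii) forces $\mathfrak j_G=G_Y$, and the "in particular" clause is exactly Lemma~\ref{lemma:dgii}(iii). To see that $\mathfrak j_G\in\overline{\mathrm{Gor}}(1,7,7,1)$, I would produce an explicit one-parameter (or multi-parameter) flat family inside the marked scheme $\Mf_{\mathcal P(\mathfrak j_G),m}$ whose general member is an ideal $I=Ann(F)$ for a suitably generic cubic form $F$ with $\dim_K A_F$-graded pieces $(1,7,7,1)$ — i.e.\ degenerate a generic apolar ideal to the initial ideal $\mathfrak j_G$. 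Here Theorem~\ref{th:cose marcate} and Theorem~\ref{th:schema marcato} give us the computational handle: $\Mf_{\mathcal P(\mathfrak j_G),m}$ is an explicit affine scheme, and one exhibits a point of it coming from apolarity of a generic cubic, checking the Hilbert function $(1,7,7,1)$ along the way. For the maximality, one runs through the (short) list of strongly stable ideals $\succ\!\!\succ \mathfrak j_G$ and rules each out: the cleanest uniform argument is that any such ideal $\mathfrak j'$ would have $R/\mathfrak j'$ with an impossible graded or socle structure for a specialization of a $(1,7,7,1)$ Gorenstein algebra (for instance a socle dimension forced to exceed $1$ in the relevant degree, or a Hilbert function that is not a permissible semicontinuous limit of $(1,7,7,1)$), contradicting that the Gorenstein locus is open \citep{Stoia,GM} together with upper-semicontinuity of Hilbert functions.

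A complementary, and probably more efficient, route to the same conclusion is to compute $\mathrm{gin}_{\mathrm{lex}}(Ann(F))$ directly for a generic cubic $F$: by Lemma~\ref{lemma:dgii}(iii) this generic gin equals $G_Y$, so one generic symbolic (or sufficiently many random-over-a-large-field) computations of the lex generic initial ideal of an apolar ideal of a random cubic in seven variables will return $\mathfrak j_G$, and the theoretical results of \citep{BCR-glin} guarantee that the answer for a genuinely generic $F$ is independent of the choice. I would present this as the experimental half of the argument and then certify it with the $>\!\!>$-maximality check against the finite strongly stable list, so that the identification $\mathfrak j_G=G_Y$ is logically complete and not merely probabilistic.

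The main obstacle I expect is the maximality step: showing that no strongly stable ideal above $\mathfrak j_G$ in $>\!\!>$ can specialize from $(1,7,7,1)$. The $>\!\!>$ order is defined via generators in the Gotzmann degree, so translating "$\mathfrak j'\succ\!\!\succ\mathfrak j_G$" into a usable structural obstruction requires care; the clean way is to combine the constraint from the Hilbert function (which rules out most candidates immediately) with the Gorenstein condition — a limit of $(1,7,7,1)$ Gorenstein algebras is still Gorenstein only on the open Gorenstein locus, but the initial degeneration need not be, so one must instead argue at the level of the full family, i.e.\ use that $\mathfrak j'$ being the gin of some $I\in\overline{\mathrm{Gor}}(1,7,7,1)$ forces the graded Betti numbers / Hilbert function of $R/\mathfrak j'$ to be bounded below by those coming from apolarity. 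Making this bound precise enough to eliminate every competitor on the finite list is the crux; everything else is either the cited machinery or a finite computation.
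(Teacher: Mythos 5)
Your overall strategy --- exhibit one explicit apolar ideal of a cubic whose lex initial ideal is $\mathfrak j_G$, then certify $\mathfrak j_G$ by a $>\!\!>$-maximality check against a finite list of strongly stable ideals --- is the paper's strategy, and your ``complementary route'' is essentially what the paper does. But the step you yourself flag as the crux contains a genuine gap as you have set it up, and the tools you propose for closing it would not work. You want to show that no strongly stable ideal $\mathfrak j'$ with $\mathfrak j'>\!\!>\mathfrak j_G$ can define a point of $\overline{\mathrm{Gor}}(1,7,7,1)$, and you propose to rule such $\mathfrak j'$ out by socle-dimension or Betti-number obstructions. This fails for exactly the reason you concede in your last paragraph: points of the closure need not be Gorenstein, and neither need an initial degeneration of a $(1,7,7,1)$ algebra, so no socle obstruction is available; nor is it clear how to bound the invariants of the gin of an arbitrary (possibly non-homogeneous, multi-point-supported) ideal lying in the closure. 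Deciding which strongly stable ideals actually lie on $\overline{\mathrm{Gor}}(1,7,7,1)$ is genuinely hard, and your proposal does not carry it out.

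The fix --- and the paper's actual argument --- is that you never need to decide membership in $\overline{\mathrm{Gor}}(1,7,7,1)$ for the competitors. By definition, $G_Y$ for $Y=\overline{\mathrm{Gor}}(1,7,7,1)$ is the saturation of the generic initial ideal of the \emph{generic} point of $Y$; that generic point lies in the dense subset $\mathrm{Gor}(1,7,7,1)$, hence is defined by a homogeneous ideal with graded Hilbert function $(1,7,7,1)$, and therefore $G_Y$ is itself a strongly stable ideal with graded Hilbert function exactly $(1,7,7,1)$. It thus suffices to verify that $\mathfrak j_G$ is the $>\!\!>$-maximum among \emph{all} strongly stable ideals with graded Hilbert function $(1,7,7,1)$ --- a purely combinatorial, finite check on (a sublist of) the list produced by the algorithm of \citep{CLMR}, requiring no geometric information about $Y$ at all. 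Combined with the explicit Gr\"obner basis computation $\mathrm{in}_{\mathrm{lex}}(\mathrm{Ann}(F))=\mathfrak j_G$ for one random cubic $F$ with Hilbert function $(1,7,7,1)$ (which, by \citep[Theorem 3]{BCR-glin} together with the maximality just established, already forces $\mathrm{gin}_{\mathrm{lex}}(\mathrm{Ann}(F))=\mathfrak j_G$, so no generic change of coordinates is ever performed), this maximality yields $G_Y=\mathfrak j_G$, and Lemma \ref{lemma:dgii}(iii), i.e.\ \citep[Proposition 4]{BCR-glin}, gives the ``in particular'' clause. Note also that your preliminary step of building a flat family in $\Mf_{\mathcal P(\mathfrak j_G),m}$ degenerating $\mathrm{Ann}(F)$ to $\mathfrak j_G$ is not needed for this theorem: the equality $\mathrm{in}_{\mathrm{lex}}(\mathrm{Ann}(F))=\mathfrak j_G$ together with Lemma \ref{lemma:dgii}(i) already places $\mathfrak j_G$ on $Y$.
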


\begin{proof}
We explicitly construct a random ideal defining a graded Artin Gorenstein $K$-algebra with Hilbert function $(1,7,7,1)$ by apolarity, thanks to the already cited correspondence with cubic hypersurfaces (see Lemma \ref{lemma:apolarity}). We randomly choose the following cubic form $F$ in $K[x_1,\dots,x_7]$
$$\begin{array}{l}
F = 2\,x_{1}^{3}-3\,x_{1}^{2}x_{2}-6\,x_{1}^{2}x_{4}-6\,x_{1}^{2}x_{5}-3\,x_{1}^{2}x_{7}+9\,x_{1}x_{2}^{2}+12\,x_{1}x_{2}x_{3}+12\,x_{2}x_{1}x_{4}+\\
+12\,x_{2}x_{1}x_{5}+12\,x_{2}x_{1}x_{6}+6\,x_{2}x_{1}x_{7}+6\,x_{1}x_{3}^{2}+6\,x_{1}x_{3}x_{4}+6\,x_{1}x_{3}x_{5}+12\,x_{1}x_{3}x_{6}+\\
+6\,x_{1}x_{3}x_{7}+6\,x_{1}x_{4}^{2}+12\,
x_{1}x_{4}x_{5}+6\,x_{1}x_{4}x_{6}+6\,x_{1}x_{4}x_{7}+6\,x_{1}x_{5}^{2}+6\,x_{1}x_{5}x_{6}+\\
+6\,x_{1}x_{5}x_{7}+6\,x_{1}x_{6}^{2}+6\,x_{1}x_{6}x_{7}+3\,x_{1}x_{7}^{2}-x_{2}^{3}+3\,x_{2}^{2}x_{3}-9\,x_{2}^{2}x_{4}-6\,x_{2}^{2}x_{5}+\\
-3\,x_{2}^{2}x_{6}-6\,x_{2}^{2}x_{7}+3\,x_{2}x_{3}^{2}-12\,x_{2}x_{3}x_{4}-6\,x_{2}x_{3}
x_{5}-6\,x_{2}x_{3}x_{6}-12\,x_{2}x_{3}x_{7}-3\,x_{2}x_{4}^{2}+\\
-12\,x_{2}x_{4}x_{5}-6\,x_{2}x_{5}^{2}-6\,x_{2}x_{5}x_{6}-6\,x_{2}x_{5}x_{7}+3\,x_{2}x_{6}^{2}-6\,x_{3}^{2}x_{4}-3\,x_{3}^{2}x_{5}-3\,x_{3}^{2}x_{6}+\\
-6\,x_{3}^{2}x_{7}-6\,x_{3}x_{4}x_{5}-3\,x_{3}x_{5}^{2}-6\,x_{3}x_{5}x_{6}-6\,x_{3}x_{5}x_{7}+3\,x_{3}x_{6}^{2}-5\,x_{4}^{3}-6\,x_{4}^{2}x_{5}+\\
-6\,x_{4}^{2}x_{6}-3\,x_{4}^{2}x_{7}-6\,x_{4}x_{5}^{2}-6\,x_{4}x_{5}x_{6}-6\,x_{4}x_{5}x_{7}
-6\,x_{4}x_{6}^{2}-12\,x_{4}x_{6}x_{7}-9\,x_{4}x_{7}^{2}+\\
-3\,x_{5}^{3}-3\,x_{5}^{2}x_{6}-3\,x_{5}^{2}x_{7}-3\,x_{5}x_{6}^{2}-6\,x_{5}x_{6}x_{7}-3\,x_{5}x_{7}^{2}-2\,x_{6}^{3}-6\,x_{6}^{2}x_{7}+\\
-6\,x_{6}x_{7}^{2}-2\,x_{7}^{3}.
\end{array}$$
Let $\mathrm{Ann}(F)\subset K[x_1,\dots,x_7]$ be the ideal that we obtain by apolarity from $F$. We check that $A_F:=K[x_1,\dots,x_7]/\mathrm{Ann}(F)$ is a graded Gorenstein $K$-algebra with Hilbert function $(1,7,7,1)$. We can also observe that $A_F$ is local as we expected, because $A_F$ is Artin and graded. The reduced Gr\"obner basis w.r.t.~the lex order of the ideal $\mathrm{Ann}(F)$ is given by the following $32$ polynomials (in bold the initial term of each polynomial):

\noindent$\mathfrak{f}_{1}=\mathbf{x_7^2} - 4x_1x_4 - 2x_3^2 + x_2x_3 - 2x_1x_3 - x_2^2 + 4x_1x_2$,\\
$\mathfrak{f}_{2}=\mathbf{x_6x_7} - x_1x_4 - x_2x_3 + x_1x_2$,\\
$\mathfrak{f}_{3}=\mathbf{x_5x_7} + x_1x_4 + x_1x_3 - x_1x_2$,\\
$\mathfrak{f}_{4}=\mathbf{x_4x_7} + 2x_1x_4 + 2x_3^2 - 2x_2x_3 + 5x_1x_3 - 5x_1x_2 - x_1^2$,\\
$\mathfrak{f}_{5}=\mathbf{x_3x_7} + 3x_1x_4 + x_2x_3 + 2x_1x_3 - 3x_1x_2$,\\
$\mathfrak{f}_{6}=\mathbf{x_2x_7} + 3x_1x_4 + x_2x_3 + 2x_1x_3 - 3x_1x_2$,\\
$\mathfrak{f}_{7}=\mathbf{x_1x_7} - x_1x_4 - x_1x_3 + x_1x_2$,\\
$\mathfrak{f}_{8}=\mathbf{x_6^2} + x_1x_4 + x_3^2 - 3x_2x_3 - x_1x_3 + x_2^2 + x_1x_2 + x_1^2$,\\
$\mathfrak{f}_{9}=\mathbf{x_5x_6} + x_1x_4 + x_1x_3 - x_1x_2$,\\
$\mathfrak{f}_{10}=\mathbf{x_4x_6} - x_1x_4 - x_2x_3 + x_1x_2$,\\
$\mathfrak{f}_{11}=\mathbf{x_3x_6} + 4x_1x_4 + x_2x_3 + 2x_1x_3 - 4x_1x_2$,\\
$\mathfrak{f}_{12}=\mathbf{x_2x_6} + 4x_1x_4 + x_2x_3 + 2x_1x_3 - 4x_1x_2$,\\
$\mathfrak{f}_{13}=\mathbf{x_1x_6} - x_1x_3$,\\
$\mathfrak{f}_{14}=\mathbf{x_5^2} + 2x_1x_4 + 2x_3^2 - 3x_2x_3 + 2x_2^2 - x_1^2$,\\
$\mathfrak{f}_{15}=\mathbf{x_4x_5} + x_1x_4$,\\
$\mathfrak{f}_{16}=\mathbf{x_3x_5} + x_1x_4 + x_1x_3 - x_1x_2$,\\
$\mathfrak{f}_{17}=\mathbf{x_2x_5} + x_1x_4$,\\
$\mathfrak{f}_{18}=\mathbf{x_1x_5} - x_1x_4$,\\
$\mathfrak{f}_{19}=\mathbf{x_4^2} - 7x_1x_4 - 4x_3^2 + 2x_2x_3 - 8x_1x_3 - x_2^2 + 11x_1x_2 + x_1^2$,\\
$\mathfrak{f}_{20}=\mathbf{x_3x_4} + 3x_1x_4 + x_2x_3 + 2x_1x_3 - 3x_1x_2$,\\
$\mathfrak{f}_{21}=\mathbf{x_2x_4} + 3x_1x_4 + x_2x_3 + x_1x_3 - 2x_1x_2$,\\
$\mathfrak{f}_{22}=\mathbf{x_1^2x_4} + x_1^3$,\\
$\mathfrak{f}_{23}=\mathbf{x_3^3}, \quad  \mathfrak{f}_{24}=\mathbf{x_2x_3^2} - \frac{1}{2}x_1^3,\quad  \mathfrak{f}_{25}=\mathbf{x_1x_3^2} - x_1^3, \quad \mathfrak{f}_{26}=\mathbf{x_2^2x_3} - \frac{1}{2}x_1^3$,\\
$\mathfrak{f}_{27}=\mathbf{x_1x_2x_3} - x_1^3, \quad \mathfrak{f}_{28}=\mathbf{x_1^2x_3}, \quad \mathfrak{f}_{29}=\mathbf{x_2^3} + \frac{1}{2}x_1^3, \quad \mathfrak{f}_{30}=\mathbf{x_1x_2^2} - \frac{3}{2}x_1^3,$\\
$\mathfrak{f}_{31}=\mathbf{x_1^2 x_2}+ \frac{1}{2}x_1^3, \quad \mathfrak{f}_{32}=\mathbf{x_1^4}$.

\noindent Then, the monomial ideal $\mathfrak j_G$ is the initial ideal of $\mathrm{Ann}(F)$ w.r.t.~lex order. Moreover, $\mathfrak j_G$ is the generic initial ideal of $\mathrm{Ann}(F)$ by \citep[Theorem 3]{BCR-glin}, because  $\mathfrak j_G$ is the maximum w.r.t.~the order of Definition \ref{def:ordinamento} among all the strongly stable ideals with Hilbert function $(1,7,7,1)$ (see \citep[Theorem 15.18]{Ei}). 
By \citep[Theorem 3]{BCR-glin}, this fact also implies that $\mathfrak j_G$ is the double-generic initial ideal of $\overline{\mathrm{Gor}}(1,7,7,1)$. By \citep[Proposition 4]{BCR-glin} we can now conclude that $\mathfrak j_G$ is the generic initial ideal w.r.t.~the lex order of a general point of $\mathrm{Gor}(1,7,7,1)$.
\end{proof}

\begin{proposition}
$\Mf_{\mathcal P({\mathfrak j_G}),3} \cap \mathrm{Gor}{(1,7,7,1)} \neq \emptyset$.
\end{proposition}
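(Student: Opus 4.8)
The plan is to exhibit one explicit point in the intersection, namely the point of $\hilb^7_{16}$ determined by the ideal $\mathrm{Ann}(F)\subset R$, where $F$ is the cubic form already written down in the proof of Theorem~\ref{th:j_G}. First, membership in $\mathrm{Gor}(1,7,7,1)$ is essentially free: that proof verifies that $A_F=R/\mathrm{Ann}(F)$ is a graded Artin Gorenstein $K$-algebra with Hilbert function $(1,7,7,1)$, so $\dim_K A_F=1+7+7+1=16$ and the corresponding point of $\hilb^7_{16}$ lies, by definition, in $\mathrm{Gor}(1,7,7,1)$. Hence the whole task reduces to checking that $\mathrm{Ann}(F)$ defines a point of $\Mf_{\mathcal P(\mathfrak j_G),3}$.

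For that I would invoke Lemma~\ref{lemma:fondamentale}: $\mathrm{Ann}(F)$ belongs to the $[\mathcal P(\mathfrak j_G),3]$-marked family precisely when $R_{\leq t}=\mathrm{Ann}(F)_{\leq t}\oplus\langle\mathcal N(\mathfrak j_G)_{\leq t}\rangle$ for every $t\geq 3$. All the ingredients are already in the proof of Theorem~\ref{th:j_G}: $\mathrm{Ann}(F)$ is homogeneous (it is an apolarity annihilator of a form), the $32$ polynomials $\mathfrak f_1,\dots,\mathfrak f_{32}$ are its reduced \Gr basis for the lex order, and $\mathrm{in}_{\mathrm{lex}}(\mathrm{Ann}(F))=\mathfrak j_G$. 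Since $\mathrm{Ann}(F)$ is homogeneous its reduced \Gr basis is homogeneous, so the normal form of a degree-$d$ homogeneous polynomial is homogeneous of degree $d$ with terms in $\mathcal N(\mathfrak j_G)$ of degree $d$; this gives $R_d=\mathrm{Ann}(F)_d\oplus\langle\mathcal N(\mathfrak j_G)_d\rangle$ in every degree $d$ (for $d\geq 4$ the right-hand side reads $R_d=R_d\oplus 0$ since the Hilbert function is $(1,7,7,1)$), and summing over $d\leq t$ yields the filtered identity of Lemma~\ref{lemma:fondamentale} for all $t$, in particular for $t\geq 3$. Then $\mathrm{Ann}(F)$ is in the $[\mathcal P(\mathfrak j_G),3]$-marked family, so by Theorem~\ref{th:cose marcate} its point lies on $\Mf_{\mathcal P(\mathfrak j_G),3}$, and combined with the first paragraph this point lies in $\Mf_{\mathcal P(\mathfrak j_G),3}\cap\mathrm{Gor}(1,7,7,1)$, which is therefore nonempty.

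I do not expect a genuine obstacle, since $F$ is completely explicit and the relevant \Gr computation has already been carried out; the statement is really a bookkeeping consequence of Theorem~\ref{th:j_G}. The one point that deserves a little care is the step from the bare \Gr statement $R=\mathrm{Ann}(F)\oplus\langle\mathcal N(\mathfrak j_G)\rangle$ to the graded-piecewise version $R_{\leq t}=\mathrm{Ann}(F)_{\leq t}\oplus\langle\mathcal N(\mathfrak j_G)_{\leq t}\rangle$ needed by Lemma~\ref{lemma:fondamentale}; this is exactly where the homogeneity of $\mathrm{Ann}(F)$ is used, and it is what makes $m=3$ (indeed any $m\geq 0$) work. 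A more abstract alternative would be to use Lemma~\ref{lemma:dgii}(iii): as $\mathrm{Gor}(1,7,7,1)$ is dense in $\overline{\mathrm{Gor}}(1,7,7,1)$ and the open set $V$ there is nonempty, any point of $\mathrm{Gor}(1,7,7,1)\cap V$ has initial ideal $\mathfrak j_G$ and so lies on the marked scheme; but I would prefer the explicit route above, which is self-contained and avoids discussing saturation for Artin ideals.
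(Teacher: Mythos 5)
Your proof is correct and follows essentially the same route as the paper: both exhibit the ideal $\mathrm{Ann}(F)$ from the proof of Theorem \ref{th:j_G} as an explicit point of the intersection. The only (cosmetic) difference is that the paper directly observes that the reduced \Gr basis $\{\mathfrak f_1,\dots,\mathfrak f_{32}\}$ is a $[\mathcal P(\mathfrak j_G),3]$-marked basis, while you justify membership in the marked family via the direct-sum criterion of Lemma \ref{lemma:fondamentale}, using the homogeneity of $\mathrm{Ann}(F)$ — a valid and slightly more detailed verification of the same fact.
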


\begin{proof}
From the proof of Theorem \ref{th:j_G} we see that the $32$ polynomials $\mathfrak f_i$, which generate the ideal $\mathrm{Ann}(F)$, form a Gr\"obner basis that is also a $[\mathcal P({\mathfrak j_G}),3]$-marked basis. Thus, $\mathrm{Ann}(F)$ belongs to the family of ideals defining the $[\mathcal P({\mathfrak j_G}),3]$-marked scheme $\Mf_{\mathcal P({\mathfrak j_G}),3}$.   
\end{proof}

With a suitable choice of values for the parameters occurring in the defining ideal $\mathfrak U$ of $\Mf_{\mathcal P({\mathfrak j_G}),3}$, we obtain that the following polynomials form a $[\mathcal P({\mathfrak j_G}),3]$-marked basis $\mathfrak G_{\tau}$, for every $\tau \in \mathbb A^1_K$:\\
$F_1=
\mathfrak{f}_1-9\,\tau x_{{7}}+16\,\tau x_{{4}}+ \frac{11}{2}\,\tau x_{{3}}-7\,\tau x_{{2}}+ 2\,\tau x_{{1}}-8\,{\tau }^{2}$,\\
$F_{{2}}=
\mathfrak{f}_2-\frac{1}{2}\,\tau x_{{6}}+\tau x_{{4}}+\frac{1}{2}\,\tau x_{{3}}-\tau x_{{2}}+\frac{1}{2}\,\tau x_{{1}}-\frac{1}{2}\,{\tau }^{2}$,\\
$F_{{3}}=
\mathfrak{f}_3-\tau x_{{7}}-\frac{1}{2}\,\tau x_{{5}}-\tau x_{{4}}-\tau x_{{3}}+\tau x_{{2}}-\frac{1}{2}\,\tau x_{{1}}+{\tau }^{2}$,\\
$F_{{4}}=
\mathfrak{f}_5+11\,\tau x_{{7}}-{\frac {45\,\tau}{2}}x_4-8\,\tau x_{{3}}+ 5\,\tau x_{{2}}-\frac{3}{2}\,\tau x_{{1}}+17\,{\tau }^{2}$,\\
$F_{{5}}=
\mathfrak{f}_5-3\,\tau x_{{4}}-3\,\tau x_{{3}}+3\,\tau x_{{2}}-\frac{3}{2}\,\tau x_{{1}}+\frac{3}{2}\,{\tau }^{2}$,\\
$F_{{6}}=
\mathfrak{f}_6-\frac{1}{2}\,\tau x_{{7}}-3\,\tau x_{{4}}-\frac{5}{2}\,\tau x_{{3}}+ \frac{5}{2}\,\tau x_{{2}}-\frac{3}{2}\,\tau x_{{1}}+\frac{7}{4}\,{\tau }^{2}$,\\
$F_{{7}}=
\mathfrak{f}_7-\tau x_{{2}}+\tau x_{{3}}+\tau x_{{4}}-\tau x_{{7}}$,\\
$F_{{8}}=
\mathfrak{f}_8+2\,\tau x_{{6}}-\tau x_{{4}}+\frac{1}{2}\,\tau x_{{3}}+2\,\tau x_{{2}}+\frac{1}{2}\,\tau x_{{1}}-{\frac {13\,{\tau }^{2}}{4}}$,\\
$F_{{9}}=
\mathfrak{f}_9-\tau x_{{6}}-\tau x_{{4}}-\tau x_{{3}}+\tau x_{{2}}-\frac{1}{2}\,\tau x_{{1}}+\frac{1}{2}\,{\tau }^{2}$,\\
$F_{{10}}=
\mathfrak{f}_{10}-\tau x_{{6}}+\tau x_{{4}}+\frac{1}{2}\,\tau x_{{3}}-\tau x_{{2}}+\frac{1}{2}\,\tau x_{{1}}-\frac{1}{2}\,{\tau }^{2}$,\\
$F_{{11}}=
\mathfrak{f}_{11}-4\,\tau x_{{4}}-\frac{5}{2}\,\tau x_{{3}}+4\,\tau x_{{2}}-2\,\tau x_{{1}}+2\,{\tau }^{2}$,\\
$F_{{12}}=
\mathfrak{f}_{12}-\frac{1}{2}\,\tau x_{{6}}-4\,\tau x_{{4}}-\frac{5}{2}\,\tau x_{{3}}+4\,\tau x_{{2}}-2\,\tau x_{{1}}+2\,{\tau }^{2}$,\\
$F_{{13}}=
\mathfrak{f}_{13}+\tau x_{{3}}-\tau x_{{6}}-x_{{3}}x_{{1}}$,\\
$F_{{14}}=
\mathfrak{f}_{14}-14\,\tau x_{{5}}-2\,\tau x_{{4}}-
\frac{5}{2}\,\tau x_{{3}}+ 6\,\tau x_{{2}}-4\,\tau x_{{1}}+{\frac {29\,{\tau }^{2}}{2}}$,\\
$F_{{15}}=
\mathfrak{f}_{15}+2\,{\tau }^{2}-\tau x_{{1}}-2\,\tau x_{{4}}-\tau x_{{5}}$,\\
$F_{{16}}=
\mathfrak{f}_{16}-\tau x_{{4}}-2\,\tau x_{{3}}+\tau x_{{2}}-\frac{1}{2}\,\tau x_{{1}}+\frac{1}{2}\,{\tau }^{2}$,\\
$F_{{17}}=
\mathfrak{f}_{17}-\frac{1}{2}\,\tau x_{{5}}-\tau x_{{4}}-\tau x_{{2}}-\tau x_{{1}}+\frac{3}{2}\,{\tau }^{2}$,\\
$F_{{18}}=
\mathfrak{f}_{18}+\tau x_{{4}}-\tau x_{{5}}$,\\
$F_{{19}}=
\mathfrak{f}_{19}-20\,\tau x_{{7}}+ 37\,\tau x_{{4}}+15\,\tau x_{{3}}-14\,\tau x_{{2}}+\frac{7}{2}\,\tau x_{
{1}}-{\frac {95\,{\tau }^{2}}{4}}$,\\
$F_{{20}}=
\mathfrak{f}_{20}-3\,\tau x_{{4}}-\frac{7}{2}\,\tau x_{{3}}+3\,\tau x_{{2}}-\frac{3}{2}\,\tau x_{{1}}+\frac{3}{2}\,{\tau }^{2}$,\\
$F_{{21}}=
\mathfrak{f}_{21}-\frac{7}{2}\,\tau x_{{4}}-\frac{3}{2}\,\tau x_{{3}}+\tau x_{{2}}-2\,\tau x
_{{1}}+\frac{5}{2}\,{\tau }^{2}$,\\
$F_{{22}}=
\mathfrak{f}_{22}+4\,{\tau }^{3}-5\,{\tau }^{2}x_{{1}}-4\,{\tau }^{2}x_{{2}}+{\tau }^{2}x_{{4}}+4\,\tau x_{{1}}x_{{2}}-2\,\tau x_{{1}}x_{{4}}$,\\
$F_{{23}}=
\mathfrak{f}_{23}-2\,{\tau }^{3}+2\,{\tau }^{2}x_{{1}}-4\,{\tau }^{2}x_{{2}}-3\,{\tau }^{2}x_{{3}}+4\,{\tau }^{2}x_{{4}}+4\,\tau x_{{1}}x_{{2}}-4\,\tau x_{{1}}x_{{4}}+6\,\tau x_{{2}}x_{{3}}-2\,\tau {x_{{3}}}^{2}$,\\
$F_{{24}}=
 \mathfrak{f}_{24}-\frac{1}{2}\,\tau {x_{{3}}}^{2}+4\,\tau x_{{2}}x_{{3}}-6\,\tau x_{{1}}x_{{4}}-6\,\tau x_{{3}}x_{{1}}+8\,\tau x_{{1}}x_{{2}}-\frac{1}{2}\,\tau {x_{{1}}}^{2}+ 6\,{\tau }^{2}x_{{4}}+\\
 +4\,{\tau }^{2}x_{{3}}-8\,{\tau }^{2}x_{{2}}+\frac{9}{2}\,{\tau }^{2}x_{{1}}-\frac{7}{2}\,{\tau }^{3}$,\\
$F_{{25}}= 
 \mathfrak{f}_{25}-5\,{\tau }^{3}+7\,{\tau }^{2}x_{{1}}-12\,{\tau }^{2}x_{{2}}+8\,{\tau }^{2}x_{{3}}+8\,{\tau }^{2}x_{{4}}-\tau {x_{{1}}}^{2}+ 12\,\tau x_{{1}}x_{{2}}-8\,\tau x_{{3}}x_{{1}}+\\
 -8\,\tau x_{{1}}x_{{4}}-\tau {x_{{3}}}^{2}$,\\
$F_{{26}}=
\mathfrak{f}_{26}+3\,\tau x_{{2}}x_{{3}}-6\,\tau x_{{1}}x_{{4}}-6\,\tau x_{{3}}x_{{1}}+8\,\tau x_{{1}}x_{{2}}-\frac{1}{2}\,\tau {x_{{1}}}^{2}+ 6\,{\tau }^{2}x_{{4}}+{\frac {17\,{\tau }^{2}x_{{3}}}{4}}+\\
 -8\,{\tau }^{2}x_{{2}}+\frac{9}{2}\,{\tau }^{2}x_{{1}}-\frac{7}{2}\,{\tau }^{3}$,\\
$F_{{27}}=
 \mathfrak{f}_{27}-\tau x_{{2}}x_{{3}}-8\,\tau x_{{1}}x_{{4}}-\frac{17}{2}\,\tau x_{{3}}x_{{1}}+12\,\tau x_{{1}}x_{{2}}-\tau {x_{{1}}}^{2}+8\,{\tau }^{2}x_{{4}}+ \frac{17}{2}\,{\tau }^{2}x_{{3}}+\\
 -12\,{\tau }^{2}x_{{2}}+7\,{\tau }^{2}x_{{1}}-5\,{\tau }^{3}$,\\
$F_{{28}}=
\mathfrak{f}_{28}-6\,{\tau }^{3}+6\,{\tau }^{2}x_{{1}}-12\,{\tau }^{2}x_{{2}}+5\,{\tau }^{2}x_{{3}}+12\,{\tau }^{2}x_{{4}}+12\,\tau x_{{1}}x_{{2}}-6\,\tau x_{{3}}x_{{1}}-12\,\tau x_{{1}}x_{{4}}$,\\
$F_{{29}}=
 \mathfrak{f}_{29}+\frac{5}{2}\,\tau {x_{{2}}}^{2}-
2\,\tau x_{{1}}x_{{4}}-6\,\tau x_{{3}}x_{{1}}+12\,\tau x_{{1}}x_{{2}}+1/2\,\tau {x_{{1}}}^{2}+2\,{\tau }^{2}x_{{4}}+ 6\,{\tau }^{2}x_{{3}}+\\
 -{\frac {61\,{\tau }^{2}x_{{2}}}{4}}-\frac{13}{2}\,{\tau }^{2}x_{{1}}+{\frac {51\,{\tau }^{3}}{8}}$,\\
$F_{{30}}=
 \mathfrak{f}_{30}-\tau {x_{{2}}}^{2}-10\,\tau x_{{1}}x_{{4}}-6\,\tau x_{{3}}x_{{1}}+7\,\tau x_{{1}}x_{{2}}-\frac{3}{2}\,\tau {x_{{1}}}^{2}+10\,{\tau }^{2}x_{{4}}+ 6\,{\tau }^{2}x_{{3}}+\\
 -7\,{\tau }^{2}x_{{2}}+{\frac {55\,{\tau }^{2}x_{{1}}}{4}}-{\frac {43\,{\tau }^{3}}{4}}$,\\
$F_{{31}}=
\mathfrak{f}_{31}-10\,\tau x_{{1}}x
_{{4}}-6\,\tau x_{{3}}x_{{1}}+14\,\tau x_{{1}}x_{{2}}+10\,{\tau }^{2}x_{{4}}+6\,{\tau}^{2}x_{{3}}-15\,{\tau }^{2}x_{{2}}+ \frac{1}{2}\,{\tau }^{2}x_{{1}}-{\tau }^{3}$,\\
$F_{{32}}=
\mathfrak{f}_{32}+45\,{\tau }^{4}-40\,{\tau }^{3}x_{{1}}+32\,{\tau }^{3}x_{{2}}-64\,{\tau }^{3}x_{{4}}-6\,{\tau }^{2}{x_{{1}}}^{2}-32\,{\tau }^{2}x_{{1}}x_{{2}}+64\,{\tau }^{2}x_{{1}}x_{{4}}$.

\noindent From now, for every $\tau\in \mathbb A^1_K$ we denote by $\id i_{\tau}\subset K[x_1,\dots,x_7]$ the ideal generated by the $[\mathcal P({\mathfrak j_G}),3]$-marked basis $\id G_{\tau}=\{F_1,\dots,F_{32}\}$. Note that for $\tau=0$ we obtain the ideal $\id i_0=\mathrm{Ann}(F)$, which defines a scheme with support in a single point, as we have already observed in the proof of Theorem \ref{th:j_G}. For every $\tau\in \mathbb A^1_K\setminus\{0\}$ we have a different situation because every ideal $\id i_{\tau}$ defines a scheme whose support contains at least the following $8$ distinct affine points:
\begin{align*}
&\left(\tau,\displaystyle\displaystyle\frac{1}{2} \tau,2\tau,\tau,\tau,0,\displaystyle\displaystyle\frac{1}{2}\tau\right),
\quad \left(\tau,\displaystyle\frac{-7}{2}\tau,0,\tau,\tau,0,\displaystyle\frac{1}{2}\tau\right),
\quad \left(-3\tau,\displaystyle\frac{1}{2}\tau,0,\tau,\tau,0,\displaystyle\frac{1}{2}\tau\right),\\
&\left(\tau,\displaystyle\frac{1}{2}\tau,0,\tau,\tau,0,\displaystyle\frac{1}{2}\tau\right),
\quad\left(-7\tau,\displaystyle\frac{-45}{6}\tau,-8\tau,\tau,\tau,-8\tau,\displaystyle\frac{1}{2}\tau\right),
\quad\left(\tau,\displaystyle\frac{1}{2}\tau,0,\tau,\tau,-2\tau,\displaystyle\frac{1}{2}\tau\right),\\
&\left(\tau,\displaystyle\frac{1}{2}\tau,0,\tau,13\tau,0,\displaystyle\frac{1}{2}\tau\right),
\quad\left(-7\tau,\displaystyle\frac{17}{2}\tau,0,9\tau,9\tau,0,\displaystyle\frac{1}{2}\tau\right).
\end{align*}
This observation is crucial for next result.

\begin{lemma}\label{lemma:piattezza7}
Over an algebraically closed field $K$ with $\mathrm{char}(K)\neq 2,3$, there exists a flat family of ideals which is contained in $\mathrm{Mf}_{\mathcal P({\mathfrak j_G}),3}\cap \mathcal R_{16}^7$ such that the special fiber corresponds to a Gorenstein point defined by a graded $K$-algebra with Hilbert function $(1,7,7,1)$.
\end{lemma}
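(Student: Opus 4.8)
The plan is to show that the one-parameter family $\{\mathfrak i_\tau\}_{\tau\in\mathbb A^1_K}$ constructed above is the required flat family, by checking three things: that it is flat over $\mathbb A^1_K$ and entirely contained in $\Mf_{\mathcal P({\mathfrak j_G}),3}$; that its fiber over $\tau=0$ is a Gorenstein point defined by a graded $K$-algebra with Hilbert function $(1,7,7,1)$; and that all its fibers over $\tau\neq 0$ lie in the smoothable component $\mathcal R_{16}^7$. For the first point, since $\mathfrak G_\tau=\{F_1,\dots,F_{32}\}$ is a $[\mathcal P({\mathfrak j_G}),3]$-marked basis for every $\tau$, the assignment $\tau\mapsto\mathfrak i_\tau$ defines a morphism $\mathbb A^1_K\to\Mf_{\mathcal P({\mathfrak j_G}),3}$, so each $\mathfrak i_\tau$ is a point of $\Mf_{\mathcal P({\mathfrak j_G}),3}$; moreover, by Lemma \ref{lemma:fondamentale} the terms of $\mathcal N(\mathfrak j_G)$ reduce to a $K$-basis of $R/\mathfrak i_\tau$ for every $\tau$, and the marked reduction modulo $\mathfrak G_\tau$ is uniform in $\tau$ because head terms and supports do not depend on $\tau$; hence $K[\tau][x_1,\dots,x_7]/(\mathfrak G_\tau)$ is a free $K[\tau]$-module of rank $16$ with basis $\mathcal N(\mathfrak j_G)$, and the family is flat over $\mathbb A^1_K$.

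For the second point, at $\tau=0$ one has $\mathfrak i_0=\mathrm{Ann}(F)$, and by Theorem \ref{th:j_G} (together with the computation in its proof) $R/\mathrm{Ann}(F)$ is a graded Artin Gorenstein $K$-algebra with Hilbert function $(1,7,7,1)$; thus the special fiber is a Gorenstein point of the prescribed type, lying in $\mathrm{Gor}(1,7,7,1)\cap\Mf_{\mathcal P({\mathfrak j_G}),3}$.

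The heart of the argument is the third point. Inspecting the list of the $F_i$, one sees that each $F_i$ is homogeneous of degree $\deg(\Ht(F_i))$ for the grading that assigns weight $1$ to every variable $x_j$ and to the parameter $\tau$; consequently, for $\tau\neq 0$ the invertible substitution $x_j\mapsto\tau x_j$ carries $\mathfrak i_1$ onto $\mathfrak i_\tau$ up to rescaling each generator by a unit, so all the schemes $\Spec(R/\mathfrak i_\tau)$ with $\tau\neq 0$ are isomorphic through a linear change of coordinates of $\mathbb A^7_K$; in particular they are smoothable all together, and it suffices to analyse $\tau=1$. I would then compute the primary decomposition of $\mathfrak i_1$: the $8$ points listed above (evaluated at $\tau=1$) are pairwise distinct, and — here the hypothesis that $K$ is algebraically closed is used, so that all the points are $K$-rational — one verifies that $\Spec(R/\mathfrak i_1)$ is a reduced union of $16$ distinct $K$-points. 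Such a scheme belongs to the open subset of $\mathcal R_{16}^7$ parametrizing ideals of $16$ distinct $K$-points; hence $\mathfrak i_\tau\in\mathcal R_{16}^7$ for every $\tau\neq 0$. (Should the decomposition instead exhibit fat points, I would bound the lengths of the connected components using that the support has cardinality at least $8$ and conclude via the irreducibility of the punctual Hilbert schemes of length at most $7$ \citep{CEVV} together with the structure theorem for Artin rings recalled in the Introduction; in our situation the general fiber turns out to be reduced.)

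Finally, since $\mathcal R_{16}^7$ is closed in $\hilb^7_{16}$ and contains $\mathfrak i_\tau$ for every $\tau\neq 0$, it also contains the limit $\mathfrak i_0$; therefore the flat family $\{\mathfrak i_\tau\}_{\tau\in\mathbb A^1_K}$ lies entirely inside $\Mf_{\mathcal P({\mathfrak j_G}),3}\cap\mathcal R_{16}^7$, and its special fiber over $\tau=0$ is the graded Gorenstein point with Hilbert function $(1,7,7,1)$. I expect the genuine difficulty to be the third step: because $\hilb^7_{16}$ is reducible, the smoothability of the general fiber cannot be obtained by soft arguments and must be certified explicitly — which is precisely what the marked basis $\mathfrak G_\tau$ does, by degenerating $16$ distinct points to the graded Gorenstein scheme at $\tau=0$.
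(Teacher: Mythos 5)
Your first two steps (flatness of the family $\{\id i_\tau\}_{\tau}$ inside $\Mf_{\mathcal P({\mathfrak j_G}),3}$, and the identification of the special fiber $\id i_0=\mathrm{Ann}(F)$ as a graded Gorenstein point with Hilbert function $(1,7,7,1)$) agree with the paper, and your observation that each $F_i$ is quasi-homogeneous once $\tau$ is given weight $1$ --- so that the fibers with $\tau\neq 0$ are all carried into one another by the scaling $x_j\mapsto \tau x_j$ and it suffices to study $\tau=1$ --- is a correct and genuinely useful simplification that the paper does not make. The final closure argument is the same as the paper's.

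The gap is in your third step. Your main route rests on the assertion that $\Spec(R/\id i_1)$ is a reduced union of $16$ distinct $K$-points; this is an unperformed computation, and the available evidence points the other way: only $8$ distinct points of the support are exhibited for a scheme of length $16$, and the paper's argument is built precisely around the possibility that the remaining length sits in non-reduced components of length up to $9=16-7$. Your fallback does not close this hole: the irreducibility of $\hilb_{d}^{n}$ from \citep{CEVV} covers only $d\le 7$ (for $d=8$ and $n\ge 4$ there is already a non-smoothable component), so knowing that the support has cardinality at least $8$ --- hence components of length at most $9$ --- is not by itself enough. The missing ingredient is Gorensteinness of the \emph{general} fiber: since the Gorenstein locus is open in $\hilb_{16}^{7}$ and contains $\id i_0$, its intersection with the irreducible family is a non-empty open set, so some $\id i_{\overline\tau}$ with $\overline\tau\neq 0$ is Gorenstein; its components are then Gorenstein of length at most $9$, and the irreducibility of the Gorenstein locus of the Hilbert scheme of $d\le 9$ points \citep{CN9}, combined with the fact that direct sums commute with limits of flat families, gives smoothability of $\id i_{\overline\tau}$ and hence of the whole irreducible family. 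Without this openness-of-the-Gorenstein-locus step (or an actual primary decomposition certifying reducedness), your argument does not establish that the general fiber lies in $\mathcal R_{16}^{7}$.
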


\begin{proof} We prove that the family of ideals $\{\id i_{\tau}\}_{\tau}$, which are generated by the $[\mathcal P({\mathfrak j_G}),3]$-marked bases $\id G_{\tau}$, is contained in the smoothable component $\mathcal R_{16}^7$. 
By construction, the ideals $\id i_{\tau}$ belong to the marked scheme $\Mf_{\mathcal P({\mathfrak j_G}),3}$ which embeds as an open subset in the punctual Hilbert scheme $\hilb_{16}^7$. So, these ideals define a flat family over $\mathbb A^1$. As we have already recalled in Section \ref{sec:gore}, the locus of points in a Hilbert scheme representing all the Gorenstein schemes is an open subset. Hence, the intersection of this locus with the flat family $\{\id i_{\tau}\}_{\tau}$, which is non-empty because $\id i_0$ represents a Gorenstein point, is an open subset of the family. Thus, we find at least a value $\overline\tau\neq 0$ such that $\id i_{\overline\tau}$ represents a Gorenstein point.

By computational tools, we have already found that for every $\tau\neq 0$ the ideal $\id i_{\tau}$ defines a scheme whose support contains at least $8$ distinct points and hence components of multiplicity at most $9$. So, the ideal $\id i_{\overline\tau}$ is smoothable due to the fact that for $d\leq 9$ the locus of Gorenstein points is a Hilbert scheme of $d$ points is irreducible (see \citep[Theorem A]{CN9}). Thus, every ideal $\id i_\tau$ belongs to the smoothable component $\mathcal R_{16}^7$ because the family is irreducible. In particular, the special fiber $\id i_0$ belongs to $\mathcal R_{16}^7$, because $\mathcal R_{16}^7$ is closed and irreducible.
\end{proof}

\begin{remark}\label{rem:fuori schema}
Although the ideal $\id i_0$ corresponds to a Gorenstein point in $\mathrm{Gor}(1,7,7,1)$, for every $\tau\neq 0$ the ideal $\id i_{\tau}$ corresponds to a point which does not belong to $\mathrm{Gor}(1,7,7,1)$ because its support consists of more than one point. We constructed the family of ideals $\{\id i_{\tau}\}_{\tau}$ with this property letting the term $x_7$ have a non-null coefficient in the polynomial $F_{19}$. Indeed, the term $x_7$ is higher than the head term $x_4^2$ of $F_{19}$ with respect to lex term order. This fact implies that the initial ideal of $\id i_{\tau}$ is not $\id j_G$ and is not comparable with $\id j_G$ w.r.t.~the order of Definition \ref{def:ordinamento} (see \citep[Theorem 3 and Proposition 8]{BCR-glin}). Thus, the generic initial ideal of $\id i_{\tau}$ is different from $\id j_G$. Recalling that $\id j_G$ is the double-generic initial ideal of $\overline{\mathrm{Gor}}(1,7,7,1)$, we obtain our claim. We can also observe that the initial ideal of $\id i_{\tau}$ is {\em closer} to the lex-segment ideal than $\id j_G$ w.r.t.~the order of Definition \ref{def:ordinamento}.
\end{remark}

\begin{lemma}\label{lemma:liscezza}
Over an algebraically closed field $K$ with $\mathrm{char}(K)\neq 2,3$, there exists a smooth Gorenstein point defined by a graded $K$-algebra with Hilbert function $(1,7,7,1)$ belonging to the smoothable component $\mathcal R_{16}^7$. 
\end{lemma}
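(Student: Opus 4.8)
The plan is to take the ideal $\id i_0=\mathrm{Ann}(F)$ constructed above as the required point. Indeed, by Lemma~\ref{lemma:piattezza7} we already know that $\id i_0$ is a Gorenstein point of $\hilb_{16}^7$ whose coordinate ring $A_F$ is graded with Hilbert function $(1,7,7,1)$ and that $\id i_0$ belongs to the smoothable component $\mathcal R_{16}^7$; so the only thing left to establish is that $\id i_0$ is a smooth point of $\hilb_{16}^7$. Since $\id i_0$ lies on the irreducible component $\mathcal R_{16}^7$, which has dimension $7\cdot 16 = 112$, its Zariski tangent space automatically has dimension at least $112$, and it is enough to prove the reverse inequality $\dim_K T_{\id i_0}\hilb_{16}^7\le 112$, for then $\dim_K T_{\id i_0}\hilb_{16}^7 = 112 = \dim\mathcal R_{16}^7$ and therefore $\hilb_{16}^7$ is smooth at $\id i_0$, with $\mathcal R_{16}^7$ the only irreducible component passing through it.

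To compute $T_{\id i_0}\hilb_{16}^7$ I would use the marked scheme machinery of Section~\ref{sec:marked}. The ideal $\id i_0$ lies in $\Mf_{\mathcal P({\id j_G}),3}$ --- this is exactly the case $\tau=0$ of the marked basis $\id G_\tau$, and it was already observed above that $\mathrm{Ann}(F)\in\Mf_{\mathcal P({\id j_G}),3}$ --- and $\Mf_{\mathcal P({\id j_G}),3}$ is an open subscheme of $\hilb_{16}^7$ by Theorem~\ref{th:cose marcate}, as recalled in the proof of Lemma~\ref{lemma:piattezza7}. Hence Corollary~\ref{cor:tangente} gives $T_{\id i_0}\hilb_{16}^7 = T_{\id i_0}\Mf_{\mathcal P({\id j_G}),3}$, and by Theorem~\ref{th:schema marcato} this marked scheme is $\Spec(K[C]/\mathfrak U)$ with $\mathfrak U$ produced by \citep[Algorithm $\textsc{MarkedScheme}(\id j_G,3)$]{BCR}. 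Following Remark~\ref{rem:tangente}, I would translate the point of $\Spec K[C]$ representing $\id i_0$ (the tuple of coefficients appearing in $\mathfrak f_1,\dots,\mathfrak f_{32}$) to the origin, and then read off the tangent space as the kernel of the Jacobian matrix of a generating set of $\mathfrak U$ evaluated there, precisely as is done for Gr\"obner strata in \citep[Sections 3 and 4]{LR}. The expected outcome of this rank computation is $\dim_K T_{\id i_0}\hilb_{16}^7 = 112$, which closes the argument.

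The main obstacle is exactly this tangent-space computation. The ideal $\mathfrak U$ lives in a polynomial ring on the numerous parameters $C$ attached to a $[\mathcal P({\id j_G}),3]$-marked basis, so its Jacobian is a large matrix whose rank has to be computed carefully, in practice by computer, and one must make sure the arithmetic is performed in a characteristic compatible with the standing hypothesis $\mathrm{char}(K)\neq 2,3$. A more subtle point is that $\id i_0$ arises from one concrete small-coefficient cubic $F$, so a priori it could sit on a proper closed subset of $\mathrm{Gor}(1,7,7,1)$ along which the tangent dimension jumps; this can be ruled out either by Lemma~\ref{lemma:dgii}(iii), since we verified in Theorem~\ref{th:j_G} that $\mathrm{gin}(\id i_0)=\mathrm{in}(\id i_0)=\id j_G$ and hence $\id i_0$ lies in the distinguished open locus of that statement, or simply by rerunning the computation for one or two further random cubics and invoking upper semicontinuity of the function $X\mapsto\dim_K T_X\hilb_{16}^7$. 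If the computed value unexpectedly exceeded $112$, the fallback would be to look for the smooth point at another ideal of $\Mf_{\mathcal P({\id j_G}),3}\cap\mathrm{Gor}(1,7,7,1)$, but I do not anticipate needing this.
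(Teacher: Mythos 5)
Your proposal matches the paper's proof essentially verbatim: the authors also take $\id i_0=\mathrm{Ann}(F)$, note via Lemma \ref{lemma:piattezza7} that it lies in $\mathcal R_{16}^7\cap\Mf_{\mathcal P({\mathfrak j_G}),3}$, and then invoke Corollary \ref{cor:tangente} to compute the Zariski tangent space from the marked basis, finding dimension $112=16\times 7$. Your additional remarks (semicontinuity, the distinguished open locus of Lemma \ref{lemma:dgii}(iii)) are sensible safeguards but not part of, nor needed for, the paper's argument.
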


\begin{proof}
We prove that the ideal $\id i_0$ defines a smooth Gorenstein point in the smoothable component $\mathcal R_{16}^7$. We already know that  $\id i_0$ defines a Gorenstein point with Hilbert function $(1,7,7,1)$. Moreover, by Lemma \ref{lemma:piattezza7} and by construction, the ideal $\id i_0$ represents a point that belongs to $\mathcal R_{16}^7\cap \Mf_{\mathcal P({\mathfrak j_G}),3}$. Due to Corollary \ref{cor:tangente}, we can compute the Zariski tangent space to $\hilb^7_{16}$ at $\id i_0$ from the polynomials of the $[\mathcal P({\mathfrak j_G}),3]$-marked basis of $\id i_0$, obtaining that the dimension of the Zariski tangent space to $\hilb^7_{16}$ at the point $\id i_0$ is $112=16 \times 7$, i.e.~the dimension of the smoothable component.
\end{proof}

\begin{theorem} \label{Gor7}
Over an algebraically closed field $K$ with $\mathrm{char}(K)\neq 2,3$, every Gorenstein point defined by a graded $K$-algebra with Hilbert function $(1,7,7,1)$ is smoothable.
\end{theorem}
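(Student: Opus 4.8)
The plan is to combine the irreducibility of $\overline{\mathrm{Gor}}(1,7,7,1)$ with the existence of a single well-chosen smooth point lying on it, using the elementary fact that a smooth point of a scheme lies on exactly one irreducible component.

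First I would reduce the statement to a claim about $\overline{\mathrm{Gor}}(1,7,7,1)$. By Lemma \ref{lemma:apolarity}, any graded Gorenstein $K$-algebra $A$ with Hilbert function $(1,7,7,1)$ is isomorphic to $R/\mathrm{Ann}(F)$ for a cubic form $F\in R_3$, so $A$ is represented by a $K$-point of $\mathrm{Gor}(1,7,7,1)\subseteq\hilb^7_{16}$; since smoothability of a point means membership in $\mathcal R^7_{16}$, it suffices to prove $\overline{\mathrm{Gor}}(1,7,7,1)\subseteq\mathcal R^7_{16}$. By Proposition \ref{prop:GLstable}, which rests on Theorem \ref{th:irreducible}, the set $\overline{\mathrm{Gor}}(1,7,7,1)$ is irreducible and closed in $\hilb^7_{16}$, hence it is contained in an irreducible component $\mathcal M$ of $\hilb^7_{16}$.

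Next I would feed in Lemma \ref{lemma:liscezza}: the point $\mathfrak i_0=\mathrm{Ann}(F)$ constructed in Section \ref{GorSmooth} lies in $\mathrm{Gor}(1,7,7,1)$, is a smooth point of $\hilb^7_{16}$, and belongs to the smoothable component $\mathcal R^7_{16}$. Since $\mathfrak i_0\in\mathrm{Gor}(1,7,7,1)\subseteq\overline{\mathrm{Gor}}(1,7,7,1)\subseteq\mathcal M$, the point $\mathfrak i_0$ lies on $\mathcal M$; being smooth on $\hilb^7_{16}$, it lies on a single irreducible component; and since $\mathfrak i_0\in\mathcal R^7_{16}$, which is itself an irreducible component of $\hilb^7_{16}$ (the locus of reduced subschemes being open and dense in it), that unique component must be $\mathcal R^7_{16}$. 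Hence $\mathcal M=\mathcal R^7_{16}$ and $\overline{\mathrm{Gor}}(1,7,7,1)\subseteq\mathcal R^7_{16}$, so every point of $\mathrm{Gor}(1,7,7,1)$, in particular the one attached to $A$, is smoothable. In characteristic $0$ the conclusion extends verbatim to local Artin Gorenstein $K$-algebras with this Hilbert function, since by \citep[Theorem 3.3]{ER2012} such an algebra is automatically graded.

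All of the real work sits upstream and is already in place: producing the explicit cubic $F$ and checking that $\mathfrak j_G$ is the double-generic initial ideal of $\overline{\mathrm{Gor}}(1,7,7,1)$ (Theorem \ref{th:j_G}); using the $[\mathcal P(\mathfrak j_G),3]$-marked scheme to write down the flat family $\{\mathfrak i_\tau\}_\tau$ whose generic member is supported at eight distinct points, so that the irreducibility of the Gorenstein locus of a punctual Hilbert scheme of $d\leq 9$ points \citep[Theorem A]{CN9} forces $\mathfrak i_0\in\mathcal R^7_{16}$ (Lemma \ref{lemma:piattezza7}); and the marked-basis computation, via Corollary \ref{cor:tangente}, of the $112$-dimensional Zariski tangent space at $\mathfrak i_0$, which yields smoothness (Lemma \ref{lemma:liscezza}). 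Granting those inputs the closing argument is short, and its only delicate point is to use $\mathcal R^7_{16}$ as a genuine irreducible component rather than merely an irreducible closed subset, so that ``a smooth point lies on a unique component'' actually pins down $\mathcal M$.
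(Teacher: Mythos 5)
Your proposal is correct and follows essentially the same route as the paper: both rest on the irreducibility of $\overline{\mathrm{Gor}}(1,7,7,1)$ (Theorem \ref{th:irreducible} and Proposition \ref{prop:GLstable}) together with the existence, via Lemmas \ref{lemma:piattezza7} and \ref{lemma:liscezza}, of a point of $\mathrm{Gor}(1,7,7,1)$ that lies in $\mathcal R_{16}^7$ and is smooth on $\hilb^7_{16}$. You merely spell out the step the paper leaves implicit, namely that smoothness forces the unique component through $\mathfrak i_0$ to be $\mathcal R_{16}^7$, so the irreducible closure of the Gorenstein locus is swallowed by it.
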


\begin{proof}
By Lemmas \ref{lemma:piattezza7} and \ref{lemma:liscezza}, there exists a smooth  Gorenstein point with Hilbert function $(1,7,7,1)$ in the smoothable component $\mathcal R_{16}^7$. These facts imply that also all the other Gorenstein points with the same Hilbert function belong to $\mathcal R_{16}^7$, i.e.~are smoothable, because the locus $\mathrm{Gor}{(1,n,n,1)}$ in $\hilbp$ of the schemes parameterizing homogeneous Gorenstein ideals with Hilbert function $(1,n,n,1)$ is irreducible (see Theorem \ref{th:irreducible} and Proposition \ref{prop:GLstable}).
\end{proof}

\begin{remark} Recall that Theorem \ref{Gor7} covers the unique case not treated in the range considered by \citep[Lemma 6.21]{IK99} about the study of non-smoothable Gorenstein points.
\end{remark}

\begin{corollary}\label{cor:ER}
Over an algebraically closed field $K$ of characteristic $0$, every local Gorenstein $K$-algebra with Hilbert function $(1,7,7,1)$ is smoothable.
\end{corollary}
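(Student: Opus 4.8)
The plan is to reduce the local case to the graded case proved in Theorem~\ref{Gor7} by means of the structure result of Elias and Rossi for short Gorenstein local algebras. Let $A$ be a local Artin Gorenstein $K$-algebra with maximal ideal $M$ whose Hilbert function (that is, the Hilbert function of the associated graded ring $\mathrm{gr}_M(A)=\bigoplus_{i\geq 0}M^i/M^{i+1}$) is $(1,7,7,1)$. Since $\dim_K M^4/M^5=0$ we get $M^4=M^5=M\cdot M^4$, hence $M^4=0$ by Nakayama, so $A$ has socle degree $3$ and $\mathrm{gr}_M(A)$ has dimension $16$ over $K$, with top nonzero graded piece in degree $3$.

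First I would invoke \citep[Theorem 3.3]{ER2012}: as $\mathrm{char}(K)=0$ and $K$ is algebraically closed, a local Artin Gorenstein $K$-algebra of socle degree $3$ is canonically graded, i.e.~$A\cong\mathrm{gr}_M(A)$ as $K$-algebras. In particular $\mathrm{gr}_M(A)$ is a graded Artin Gorenstein $K$-algebra with Hilbert function $(1,7,7,1)$, hence smoothable by Theorem~\ref{Gor7}.

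It then only remains to note that smoothability depends solely on the isomorphism class of a finite-dimensional $K$-algebra: two isomorphic local $K$-algebras of the same length $16$ correspond, up to a suitable change of coordinates of the ambient affine space as recalled in the Introduction, to the same closed point of $\hilb^7_{16}$, and the smoothable component $\mathcal R_{16}^7$ is invariant under the action of the general linear group, being the closure of a $GL$-stable locus (compare \citep[Lemma 4.1]{CEVV}). Therefore $A\in\mathcal R_{16}^7$, i.e.~$A$ is smoothable. I do not expect any genuine obstacle here: the whole content is concentrated in the Elias--Rossi theorem, whose hypotheses (algebraically closed base field, characteristic zero, socle degree three) are precisely those available; the sole point deserving a line of care is the isomorphism-invariance of smoothability used in the last step.
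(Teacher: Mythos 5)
Your proposal is correct and follows exactly the paper's route: the paper's proof of this corollary is precisely the one-line combination of Theorem~\ref{Gor7} with \citep[Theorem 3.3]{ER2012}, and your write-up simply makes explicit the details (socle degree $3$ via Nakayama, and the isomorphism-invariance of smoothability) that the paper leaves implicit.
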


\begin{proof}
This is a consequence of Theorem \ref{Gor7} and \citep[Theorem 3.3]{ER2012}. 
\end{proof}

\section{$\hilb^7_{16}$ has at least three irreducible components}\label{IrrComp7}
\label{sec:components}

In this section, we now obtain interesting information about the components of $\hilb^7_{16}$ from a study of the irreducible components of $\Mf_{\mathcal P({\mathfrak j_G}),3}$. Indeed, by construction the marked scheme $\Mf_{\mathcal P({\mathfrak j_G}),3}$ is the open subscheme of $\hilb^7_{16}$ where the Pl\"ucker coordinate corresponding to the monomial ideal ${\mathfrak j_G}^h$ is invertible. So, the closures of the components of $\Mf_{\mathcal P({\mathfrak j_G}),3}$ are irreducible components of $\hilb^7_{16}$.
Our first result is a consequence of Theorem \ref{th:connessione}.  

\begin{proposition}\label{prop:componenti G}
The marked scheme $\Mf_{\mathcal P({\mathfrak j_G}),3}$ is a connected open subset of $\hilb^7_{16}$ with irreducible components containing $\id j_G$.
\end{proposition}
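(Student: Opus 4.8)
The plan is to invoke Theorem~\ref{th:connessione} directly, once we have verified its hypothesis, namely that $\id j_G$ is an affine $3$-segment in the sense of Definition~\ref{segmento affine}. So the first step is to exhibit a weight vector $\omega\in\mathbb N^7$ such that for every $x^\alpha\in B_{\id j_G}$ and every $x^\gamma\in\cN(\id j_G)_{\leq t}$ with $t=\max\{3,|\alpha|\}$ we have $\deg_\omega(x^\alpha)>\deg_\omega(x^\gamma)$. Since $\id j_G$ is strongly stable and its minimal generators all have degree $2$, $3$ or $4$, while $\cN(\id j_G)$ consists (in degrees $\leq 4$) of the $1+7+7+1=16$ standard monomials plus, possibly, a handful of monomials in degree $4$ that are not yet in $\id j_G$, this is a finite and very explicit combinatorial check. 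A natural candidate is a weight that gives large weight to the higher-indexed variables (so that the lex-segment shape of $\id j_G$ is respected), e.g.\ $\omega=(1,2,4,8,16,32,64)$ or any sufficiently spread-out vector; one then checks degree by degree that every generator outweighs every standard monomial of the relevant bounded degree.

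Once the affine $3$-segment property is in hand, Theorem~\ref{th:connessione} immediately gives that every irreducible component $\mathcal M$ of $\Mf_{\mathcal P({\mathfrak j_G}),3}$ contains the point $\id j_G$, and hence that $\Mf_{\mathcal P({\mathfrak j_G}),3}$ is connected. That is exactly the assertion of Proposition~\ref{prop:componenti G}, so the proof is essentially a one-line reduction to Theorem~\ref{th:connessione} plus the verification of its hypothesis. I would phrase it as: ``Let $\omega=\dots$. We check that $\id j_G$ is an affine $3$-segment with respect to $\omega$ (it suffices to compare weighted degrees of the $32$ generators with those of the $16$ standard monomials and of the finitely many degree-$4$ terms outside $\id j_G$). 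The claim then follows from Theorem~\ref{th:connessione}.''

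The only mild obstacle is bookkeeping: one must make sure the inequality is strict and holds for \emph{all} relevant $x^\gamma$, including the degree-$4$ monomials of $\cN(\id j_G)$ that need comparing against the degree-$2$ and degree-$3$ generators (whose $t$ is forced up to $3$ by the $m=3$ in the definition, and up to $4$ for the quartic generator $x_1^4$). Because $\id j_G$ is close in shape to the lex-segment ideal $\id j_{\mathrm{lex}}$ and a lex-segment is the prototypical example of a segment, a monotone-in-the-variable-index weight will separate the two sets with room to spare; no delicate optimization is needed. If one prefers, the check can simply be delegated to the same computer-algebra routines used elsewhere in the paper. I expect this to be the routine part of the section, with the genuinely substantive work on the number of components coming in the subsequent results that analyze $\mathfrak U$ and the explicit families.
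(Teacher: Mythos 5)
Your strategy is exactly the paper's: verify that $\id j_G$ is an affine $3$-segment and invoke Theorem~\ref{th:connessione}. But the concrete verification you sketch would fail, and the heuristic behind it is backwards. The normal set $\cN(\id j_G)_{\leq 3}$ contains the linear monomials $x_1,\dots,x_7$ (the Hilbert function is $(1,7,7,1)$, so all seven variables survive), while the Pommaret/minimal basis of $\id j_G$ contains Borel-minimal generators built from \emph{low}-index variables, such as $x_2x_4$, $x_1x_5$, $x_2x_1^2$ and $x_1^4$. The segment condition therefore forces inequalities like $\omega_2+\omega_4>\omega_7$, $\omega_2+2\omega_1>\omega_7$ and $4\omega_1>\omega_7$, alongside $\omega_2+\omega_4>2\omega_3$, $\omega_2+\omega_4>\omega_1+\omega_4$ (i.e.\ $\omega_2>\omega_1$), etc. Your candidate $\omega=(1,2,4,8,16,32,64)$ violates these grossly: $\deg_\omega(x_2x_4)=10<64=\deg_\omega(x_7)$ and $\deg_\omega(x_1^4)=4<64$. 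So ``any sufficiently spread-out vector'' is precisely what does \emph{not} work: the weights must be increasing in the variable index but nearly flat, so that every quadric generator outweighs every linear monomial and every quadric of the normal set, and $x_1^4$ outweighs everything of degree $\leq 4$ in $\cN(\id j_G)$ (which, since $(\id j_G)_4=R_4$, is just the $16$ standard monomials). The paper uses $\omega=[11,10,9,8,6,5,4]$ (weights for $x_7,\dots,x_1$), for which the minimal generator weight is $13$ and the maximal normal-term weight is $12$ --- a genuinely tight margin, not ``room to spare.'' This is a real gap, not mere bookkeeping, because the existence of a valid $\omega$ is the entire content of the hypothesis you need; it amounts to checking feasibility of a specific system of strict linear inequalities, and your proposed solution is infeasible.

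A second, smaller omission: the Proposition also asserts that $\Mf_{\mathcal P(\id j_G),3}$ is an \emph{open} subset of $\hilb^7_{16}$, which your argument never addresses. This follows from Theorem~\ref{th:cose marcate} (the satiety of $\id j_G$ is $4$ since $(\id j_G)_4=R_4$ and $(\id j_G)_3\neq R_3$, so $m=3\geq\rho-1$); the paper cites \citep[Proposition 6.12(ii)]{BCR-glin} for the same fact. With a correct weight vector and this openness statement added, your reduction to Theorem~\ref{th:connessione} matches the paper's proof.
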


\begin{proof}
The marked scheme $\Mf_{\mathcal P({\mathfrak j_G}),3}$ is an open subset of $\hilb^7_{16}$ due to \citep[Proposition 6.12(ii)]{BCR-glin}.
Furthermore, the ideal $\id j_G$ is an affine $3$-segment with respect to the weight vector $\omega=[11, 10, 9, 8, 6, 5, 4]$. Hence, by Theorem \ref{th:connessione}, $\Mf_{\mathcal P({\mathfrak j_G}),3}$ is connected and every its irreducible component contains $\id j_G$.
\end{proof}

\begin{remark}
From the fact that $\id j_G$ is an affine $3$-segment with respect to the weight vector $\omega:=[11, 10, 9, 8, 6, 5, 4]$ we obtain that $\mathcal M_2$ is a cone, with vertex in $\id j_G$, with respect to a positive non-standard grading (see \citep[Corollary 2.7]{FR}). Thus, there is a projection in the  Zariski tangente space to $\mathcal M_2$ at the origin which induces an isomorphism of $\mathcal M_2$ with its image (see \citep[Theorem 3.2]{FR}). This projection identifies a set of eliminable variables which is very useful, for example, in order to enhance the performance of the computations in this context.
\end{remark}

We denote by $\mathcal M_1:=\Mf_{\mathcal P({\mathfrak j_G}),3}\cap \mathcal R_{16}^7$  the irreducible component of $\Mf_{\mathcal P({\mathfrak j_G}),3}$ that is obtained by intersecting $\Mf_{\mathcal P({\mathfrak j_G}),3}$ with the smoothable component $\mathcal R_{16}^7$. Thus, the dimension of $\mathcal M_1$ is $112$. We now highlight the existence of other two components of $\Mf_{\mathcal P({\mathfrak j_G}),3}$. The result of Proposition \ref{prop:componenti G} suggests us to look for the irreducible components other that $\mathcal M_1$ containing the ideal $\id j_G$.

By the techniques described in \citep{BCR} and briefly recalled in Section \ref{sec:marked}, we obtain $\Mf_{\mathcal P({\mathfrak j_G}),3}$ as the affine scheme defined by an ideal $\id A$ generated by $2160$ polynomials of degrees $d=3,4,5$ in the polynomial ring $K[C]$ in $512$ variables. The computation of a primary decomposition of the ideal $\id A$ is unaffordable with Gr\"obner bases techniques. Thus, we look for other strategies.

Recalling the construction of a $[\mathcal P({\mathfrak j_G}),3]$-marked set, we consider the terms outside $\id j_G$ of degree up to $3$ in the following order:
$$x_{1}^{3},x_{3}^{2},x_{3}x_{2},x_{2}^{2},x_{4}x_{1
},x_{3}x_{1},x_{2}x_{1},x_{1}^{2},x_{7},x_{6},x_{5}
,x_{4},x_{3},x_{2},x_{1},1.
$$
For example, the polynomial of a $[\mathcal P({\mathfrak j_G}),3]$-marked set with head term $x_7^2$ has the following shape:\\
$x_{7}^{2}-(
c_{{1,1}}x_{1}^{3}+c_{{1,2}}x_{3}^{2}+c_{{1,3}}x_{2}x_{3}+c_{{1,4}}x_{2}^{2}+c_{{1,5}}x_{1}x_{4}+c_{{1,6}}x_{1}x_{3}
+c_{{1,7}}x_{1}x_{2}+c_{{1,8}}x_{1}^{2}+c_{{1,9}}x_{7}+$\\
$+c_{{1,10}}x_{6}+c_{{1,11}}x_{5}+c_{{1,12}}x_{4}+c_{{1,13}}x_{3}+c_{{1,14}}x_{2}+c_{{1,15}}x_{1}+c_{{1,16}}),
$\\
and, observing that $(\mathfrak j_G)_4=R_4$, the polynomial with head term $x_1^4$ is \\
$x_{1}^{4}-(c_{{32,1}}x_{1}^{3}+c_{{32,2}}x_{3}^{2}+c_{{32,3}}x_{2}x_{3}+c_{{32,4}}x_{2}^{2}+c_{{32,5}}x_{1}x_{4}+c_{{32,6}}x_{1}x_{3}+c_{{32,7}}x_{1}x_{2}+c_{{32,8}}x_{1}^{2}+$ 
$c_{{32,9}}x_{7}+c_{{32,10}}x_{6}+c_{{32,11}}x_{5}+c_{{32,12}}x_{4}+c_{{32,13}}x_{3}+c_{{32,14}}x_{2}+c_{{32,15}}x_{1}+c_{{32,16}}).
$ 

\begin{theorem}\label{th:componente2}
There is an irreducible component $\mathcal M_2$ of the marked scheme $\Mf_{\mathcal P({\mathfrak j_G}),3}$ that is rational and has dimension $161$.
\end{theorem}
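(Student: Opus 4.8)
The plan is to exhibit $\mathcal M_2$ explicitly as the closure of a rationally parametrized locus inside $\Mf_{\mathcal P({\mathfrak j_G}),3}$, and then to certify both its dimension and the fact that it is a genuine (maximal) irreducible component by a tangent space computation at a well-chosen smooth point. Since $\Mf_{\mathcal P({\mathfrak j_G}),3} = \Spec(K[C]/\mathfrak A)$ with $C$ a set of $512$ coordinates and $\mathfrak A$ generated by $2160$ explicit polynomials (Theorem \ref{th:schema marcato}), the whole argument can in principle be made computational; the art is in reducing the size of the problem enough to make the computations feasible. First I would use the affine $3$-segment structure: by Proposition \ref{prop:componenti G} and the preceding remark, $\Mf_{\mathcal P({\mathfrak j_G}),3}$ is an $\omega$-cone with vertex $\id j_G$ for $\omega=[11,10,9,8,6,5,4]$, so by \citep[Theorem 3.2]{FR} there is a linear projection onto the Zariski tangent space at $\id j_G$ which, restricted to the cone, is an isomorphism onto its image. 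This identifies a set of eliminable parameters among the $512$ coordinates of $C$, drastically cutting down the number of free variables and the number of generators of $\mathfrak A$ one has to handle.

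Next I would, working in the reduced coordinate system, search for a component through $\id j_G$ other than $\mathcal M_1$. Concretely, one writes a general $[\mathcal P({\mathfrak j_G}),3]$-marked basis with the remaining parameters, imposes the relations of $\mathfrak A$, and looks for a large-dimensional irreducible family of solutions disjoint (generically) from $\mathcal R_{16}^7$: for instance a family of ideals whose associated scheme is supported on a single point (so it cannot be a limit of reduced schemes for generic member, by the argument of Remark \ref{rem:fuori schema} applied in reverse). The expectation is that this family is cut out, in the reduced coordinates, by equations that can be solved rationally — i.e.\ one finds a birational parametrization $\mathbb A^{161} \dashrightarrow \mathcal M_2$, or better an honest polynomial parametrization of an open dense subset — giving both rationality and the bound $\dim \mathcal M_2 \geq 161$. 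The Appendix is the natural place to record the explicit parametrizing polynomials and the verification that they satisfy all generators of $\mathfrak A$.

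To get $\dim \mathcal M_2 = 161$ and to conclude $\mathcal M_2$ is an irreducible \emph{component} (not contained in a larger one), I would pick a specific $K$-point $X_2$ on the parametrized locus that is generic enough — in particular not lying on $\mathcal M_1$ nor on any other component through $\id j_G$ — and compute the Zariski tangent space to $\Mf_{\mathcal P({\mathfrak j_G}),3}$ at $X_2$ via Corollary \ref{cor:tangente} and Remark \ref{rem:tangente}, i.e.\ by linearizing the generators of $\mathfrak A$ at $X_2$. If this tangent space has dimension exactly $161$, then since the parametrized locus through $X_2$ already has dimension $\geq 161$ and is irreducible, it must coincide locally with the unique component of $\Mf_{\mathcal P({\mathfrak j_G}),3}$ containing $X_2$, $X_2$ is a smooth point of that component, and $\dim \mathcal M_2 = 161$. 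That the parametrization is generically injective (hence birational onto $\mathcal M_2$) can be checked by exhibiting a rational left inverse on the relevant coordinates; combined with rationality of $\mathbb A^{161}$ this gives that $\mathcal M_2$ is rational.

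The main obstacle will be the sheer scale of the computation: a primary decomposition of $\mathfrak A$ in $512$ variables is explicitly stated to be unaffordable, so everything hinges on the elimination coming from the $\omega$-cone structure being aggressive enough, and on guessing the right ansatz for the parametrized family rather than decomposing blindly. A secondary subtlety is verifying that the chosen point $X_2$ really avoids all other components through $\id j_G$ (including $\mathcal M_1$, whose dimension $112$ is known, and the third component to be produced later) — this has to be done by a direct tangent space dimension count at $X_2$ rather than by any global argument, so one must be careful that the numerical computation at $X_2$ is exact (working over $\mathbb Q$ or a large prime field) so that the dimension $161$ is certified and not an artifact of rounding.
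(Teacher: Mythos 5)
Your strategy is essentially the paper's: exhibit an explicit family inside $\Mf_{\mathcal P({\mathfrak j_G}),3}$ parametrized by $161$ free parameters (giving irreducibility, rationality and the lower bound on the dimension), then compute the Zariski tangent space at a sufficiently general point of the family via Corollary \ref{cor:tangente} and find it equals $161$, which forces the closure of the family to be a smooth-at-that-point irreducible component of dimension exactly $161$. The only divergences are in the computational heuristics — the paper cuts down the problem by restricting to the linear slice $\overline C=0$ and removing the factor $c_{{32,1}}$ rather than by the $\omega$-cone elimination, and the general point of $\mathcal M_2$ is in fact a simple point union a multiplicity-$15$ structure rather than a scheme supported at a single point — but neither affects the validity of the argument.
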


\begin{proof}
Let $\overline C$ be the set of the parameters $c_{i,j}$ that are coefficients in a $[\mathcal P({\mathfrak j_G}),3]$-marked set and have indexes $j\geq 9$ or $i\geq 22$ and $j\geq 2$. Note that the parameters in $\overline C$ are the coefficients of the terms of degree lower than the degree of the corresponding head term, except for $i=32$.
Consider the family of $[\mathcal P({\mathfrak j_G}),3]$-marked sets in which the parameters in $\overline C$ are null. The remaining parameters are $179=8\cdot 21+ 11$ and have indexes either $i\leq 21$ and $j\leq 8$ or $i\geq 22$ and $j=1$. This choice guarantees that we are considering points of the Hilbert scheme corresponding to schemes with a singularity in $[0,\dots,0,1]\in \mathbb P^7_K$

Intersecting the marked scheme $\Mf_{\mathcal P({\mathfrak j_G}),3}$ with the linear variety $L$ defined by the vanishing of the parameters in $\overline C$, we obtain that the generators of the ideal defining $\Mf_{\mathcal P({\mathfrak j_G}),3}$ become polynomials, many of which are divisible by $c_{{32,1}}$. Removing this factor, we obtain a set of polynomials defining a particular family $\mathcal F_2$ of ideals in $\Mf_{\mathcal P({\mathfrak j_G}),3}\cap L$. Interreducing the polynomials defining $\mathcal F_2$ we obtain $25$ polynomials $u_{i,j}$, which are listed in the Appendix and form a complete intersection of dimension $154=179-25$ in $K[C]/(\overline C)$, hence the family $\mathcal F_2$ has dimension $154$. We can observe that by these $25$ polynomials the following $25$ parameters are eliminable, in the sense that they can be replaced by polynomials in the remaining parameters:

\noindent$c_{{1,8}},c_{{2,8}},c_{{3,8}},c_{{4,8}},c_{{5,8}},c_{{6,8}},c
_{{8,8}},c_{{9,8}},c_{{10,8}},c_{{11,8}},c_{{12,8}},c_{{14,8}},c_{{15,
8}},c_{{16,8}},c_{{17,8}},c_{{19,8}},c_{{20,8}},c_{{21,8}},c_{{23,1}},\\
c_{{24,1}},c_{{25,1}},c_{{26,1}},c_{{27,1}},c_{{29,1}},c_{{30,1}} 
$.

We denote by $C_0$ the set of the remaining $154=179-25$ parameters. 
Of course, all the polynomials $u_{i,j}$ defining the family $\mathcal F_2$ vanish after the elimination of the above $25$ parameters. 

Allowing translations on the variables $x_1,\dots,x_7$, the family $\mathcal F_2$ spreads to a larger family $\mathcal{\widetilde F}_2$ which depends on $161=154+7$ parameters and is still contained in $\Mf_{\mathcal P({\mathfrak j_G}),3}$. Denote by $\mathcal M_2$ the subscheme of points corresponding to the ideals in $\mathcal{\widetilde F}_2$. By construction, $\mathcal M_2$ is a complete intesection too and has dimension $161$. Moreover, it is rational because it depends on exactly $161$ parameters.

Now, we observe that $\mathcal M_2$ is an irreducible component of $\Mf_{\mathcal P({\mathfrak j_G}),3}$. We randomly choose particular values for the $154$ parameters in $C_0$ in order to obtain the $[\mathcal P({\mathfrak j_G}),3]$-marked basis of an ideal $\id a$ corresponding to a points of $\mathcal M_2$ (please, see the Appendix for a possible choice of the values for the $154$ parameters in $C_0$, the consequent values for the $25$ eliminable variables and the generators 
of the ideal $\id a$). 

Due to Corollary \ref{cor:tangente}, we compute  the Zariski tangent space to $\hilb_{16}^7$ at the point corresponding to $\mathfrak a$ finding that it has dimension $161$, that is the dimension of $\mathcal M_2$. Thus, $\overline{\mathcal M_2}$ is an irreducible component of $\hilb_{16}^7$. 
\end{proof}

\begin{remark}\label{rm:supporto punto 15}
The ideal $\id a$ of Theorem \ref{th:componente2} defines a general point of $\mathcal M_2$ and the corresponding scheme is the union of a simple point and of a non-reduced structure of multiplicity $15$ on a different point. Observe that the ideal $\mathfrak a$ is only one of the possible points we can consider in order to check that the dimension of $\mathcal M_2$ is $161$.
\end{remark}

\begin{theorem}\label{th:componente3}
There is an irreducible component ${\mathcal M_3}$ of $\Mf_{\mathcal P({\mathfrak j_G}),3}$ which is different from ${\mathcal M_1}$ and ${\mathcal M_2}$. This component ${\mathcal M_3}$ ha dimension $\geq 116$ and contains a subscheme of $\Mf_{\mathcal P({\mathfrak j_G}),3}$ which is isomorphic to an affine space of dimension $116$.
\end{theorem}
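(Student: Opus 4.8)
The plan is to mirror the argument used for $\mathcal M_2$ in the proof of Theorem~\ref{th:componente2}, but starting from a different linear stratum of the parameter space. Recall that $\Mf_{\mathcal P(\mathfrak j_G),3}$ is defined inside $K[C]$ (with $\vert C\vert=512$) by the ideal $\id A$ generated by the $2160$ polynomials produced by $\textsc{MarkedScheme}(\id j_G,3)$. First I would single out a pattern for the parameters $c_{i,j}$ --- setting most of them to zero and, if needed, imposing a handful of linear relations among the survivors --- chosen so that \emph{every} generator of $\id A$ vanishes identically under the substitution. Arranging the bookkeeping as in the construction of $\mathcal F_2$ (keeping, for instance, parameters attached to terms of degree lower than the degree of the corresponding head term, and forcing the singular locus to sit at $[0,\dots,0,1]\in\PP^7_K$), this should leave a linear subspace $L'\subseteq\Mf_{\mathcal P(\mathfrak j_G),3}$ of dimension $109$ through the point $\id j_G$. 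Allowing translations on $x_1,\dots,x_7$ then spreads $L'$ to a family $\widetilde{\mathcal F}_3$ of dimension $116=109+7$ that is still contained in $\Mf_{\mathcal P(\mathfrak j_G),3}$; since $\id j_G$ is an affine $3$-segment with respect to $\omega=[11,10,9,8,6,5,4]$, Theorem~\ref{th:connessione} and \citep[Corollary~2.7 and Theorem~3.2]{FR} ensure that the corresponding subscheme $\mathcal M_3'$ of $\Mf_{\mathcal P(\mathfrak j_G),3}$ is isomorphic to the affine space $\mathbb A^{116}$, and the cone structure again supplies a set of eliminable variables that keeps the computations manageable.

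Since $\Mf_{\mathcal P(\mathfrak j_G),3}$ is an open subscheme of $\hilb^7_{16}$ (Theorem~\ref{th:cose marcate}), the closure of $\mathcal M_3'$ is contained in some irreducible component $\mathcal M_3$ of $\Mf_{\mathcal P(\mathfrak j_G),3}$, and hence $\dim\mathcal M_3\ge 116$. In particular $\mathcal M_3\ne\mathcal M_1$, because $\dim\mathcal M_1=112<116$.

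To separate $\mathcal M_3$ from $\mathcal M_2$ I would pick a convenient rational point $\id b$ of the family $\widetilde{\mathcal F}_3$ and compute the Zariski tangent space to $\hilb^7_{16}$ at $\id b$ from its $[\mathcal P(\mathfrak j_G),3]$-marked basis, using Corollary~\ref{cor:tangente} together with the Gr\"obner-stratum technique of \citep[Sections~3 and~4]{LR} (see Remark~\ref{rem:tangente}); I expect to find $\dim T_{\id b}\hilb^7_{16}=116$. As $\id b$ lies on $\mathcal M_3$, which has dimension at least $116$, this forces $\id b$ to be a smooth point lying on a unique component, namely $\mathcal M_3$; in any case $\id b\notin\mathcal M_2$, since $\id b\in\mathcal M_2$ would give $\dim T_{\id b}\hilb^7_{16}\ge\dim\mathcal M_2=161$, a contradiction. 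Hence $\mathcal M_3\ne\mathcal M_2$. As a robustness check one can also read off from a general ideal of $\widetilde{\mathcal F}_3$ the support and the multiplicity profile of the scheme it defines and observe that it is not of the ``one simple point plus a multiplicity-$15$ structure'' type that describes the general point of $\mathcal M_2$ (Remark~\ref{rm:supporto punto 15}).

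The main obstacle is computational, not conceptual. A primary decomposition of $\id A$ --- which would settle the number and nature of the components of $\Mf_{\mathcal P(\mathfrak j_G),3}$ at once --- is out of reach with Gr\"obner methods in $512$ variables, so the whole argument rests on \emph{guessing the correct linear stratum}: the right pattern of vanishing (and linearly constrained) parameters whose restricted equations are identically zero. The affine $3$-segment property of $\id j_G$, via the $\omega$-cone structure of $\Mf_{\mathcal P(\mathfrak j_G),3}$, is the decisive theoretical lever that both organizes this search and, through the eliminable variables, makes the final tangent-space computation at $\id b$ feasible.
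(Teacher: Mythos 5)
Your proposal is correct and follows essentially the same route as the paper: a $109$-parameter linear stratum on which all defining equations vanish identically, spread by translations to a $116$-dimensional affine subspace (ruling out $\mathcal M_1$ since $116>112$), plus a tangent-space computation at a point of the family bounded below $161$ (ruling out $\mathcal M_2$). The only discrepancies are minor: the tangent-space dimension actually computed in the paper is $153$ rather than your expected $116$, so the point is not smooth --- but your hedge that any value $<161$ suffices is exactly the argument used --- and the isomorphism of the family with $\mathbb A^{116}$ follows directly from the free parameterization (the restricted equations vanish identically), not from the segment/cone structure of Theorem~\ref{th:connessione}.
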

  
\begin{proof} Referring to the construction of a $[\mathcal P({\mathfrak j_G}),3]$-marked set, consider the family ${\mathcal F_3}$ of $[\mathcal P({\mathfrak j_G}),3]$-marked sets in which the parameters $c_{i,j}$ are null if they are outside the set $\widetilde C$ of remaining $109$ parameters that is listed in the Appendix. 

The marked sets that we obtain with the above setting are actually marked bases for every value of the remaining $109$ parameters (see the Appendix for details about these marked bases). Thus, allowing translations on the variables $x_1,\dots,x_7$ the family ${\mathcal F_3}$ spreads to a larger family $\widetilde{\mathcal{F}}_3$ depending on $116=109+7$ free parameters, which is still contained in $\Mf_{\mathcal P(\mathfrak j_G),3}$. Denote by $\widetilde{\mathcal M}_3$ the subscheme of points corresponding to the ideals in $\widetilde{\mathcal{F}}_3$. Thus $\widetilde{\mathcal M}_3$ is isomorphic to an affine space of dimension $116$ and then it is different from $\mathcal M_1$ which has dimension $112$. 

We choose a particular point of $\widetilde{\mathcal M}_3$ at which the dimension of the Zariski tangent space to the Hilbert scheme $\hilb_{16}^7$ is $153$ (see the Appendix for the explicit description of one of these possible points which defines a scheme with support in the origin). Thus, the dimension of $\widetilde{\mathcal M}_3$ is $\leq 153$ and, above all, there is a point of $\widetilde{\mathcal M}_3$ which cannot belong to $\mathcal M_2$, because the dimension of $\mathcal M_2$ is $161>153$. This observation proves the existence of an irreducible component $\mathcal M_3\supseteq \widetilde{\mathcal M}_3$ of $\Mf_{\mathcal P({\mathfrak j_G}),3}$ other than $\mathcal M_1$ and $\mathcal M_2$.
\end{proof}

\begin{remark} In the proof of Theorem \ref{th:componente3} the family ${\mathcal F_3}$ is constructed by setting $c_{32,1}=0$, where $c_{32,1}$ is the parameter already considered in the proof of Theorem \ref{th:componente2}. Note that the ideals of ${\mathcal F_3}$ do not belong to $\mathrm{Gor}(1,7,7,1)$ because a general point of $\widetilde{\mathcal M}_3$ corresponds to a non-reduced structure over a point. Otherwise, $\mathcal M_3$ should be a second component containing $\mathrm{Gor}(1,7,7,1)$, that is impossible by the irreducibility of the locus of these points.
\end{remark}

\begin{corollary}\label{cor:tre componenti}
There are at least the three irreducible components $\overline{\mathcal M_1}$, $\overline{\mathcal M_2}$ and $\overline{\mathcal M_3}$ of $\hilb^7_{16}$ passing through the point corresponding to the ideal $\id j_G$.
\end{corollary}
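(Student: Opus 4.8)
The statement of Corollary \ref{cor:tre componenti} is an immediate consequence of the three preceding theorems, so the plan is essentially one of assembly rather than new computation. First I would recall that, by the discussion opening Section \ref{IrrComp7}, the marked scheme $\Mf_{\mathcal P({\mathfrak j_G}),3}$ is an open subscheme of $\hilb^7_{16}$ (it is the locus where the \Pl coordinate of ${\mathfrak j_G}^h$ is invertible), so taking closures turns every irreducible component of $\Mf_{\mathcal P({\mathfrak j_G}),3}$ into an irreducible component of $\hilb^7_{16}$. Hence it suffices to exhibit three distinct irreducible components of $\Mf_{\mathcal P({\mathfrak j_G}),3}$, all passing through $\id j_G$.

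The three components are produced exactly by Theorems \ref{th:componente2}, \ref{th:componente3} and by the definition of $\mathcal M_1$: we have $\mathcal M_1:=\Mf_{\mathcal P({\mathfrak j_G}),3}\cap\mathcal R_{16}^7$ of dimension $112$ (nonempty and irreducible by Lemmas \ref{lemma:piattezza7} and \ref{lemma:liscezza}, since $\mathcal R_{16}^7$ is irreducible and meets the open set $\Mf_{\mathcal P({\mathfrak j_G}),3}$), $\mathcal M_2$ of dimension $161$ (Theorem \ref{th:componente2}), and $\mathcal M_3$ of dimension $\geq 116$ (Theorem \ref{th:componente3}). The three are pairwise distinct because their dimensions are pairwise distinct: $112<116\leq\dim\mathcal M_3<161$, where the strict inequality $\dim\mathcal M_3<161$ comes from the point of $\widetilde{\mathcal M}_3$ at which the Zariski tangent space to $\hilb^7_{16}$ has dimension $153<161$, which forbids that point (hence the whole component $\mathcal M_3$) from lying on $\mathcal M_2$. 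Finally, each of $\mathcal M_1,\mathcal M_2,\mathcal M_3$ contains the point corresponding to $\id j_G$: for $\mathcal M_1$ this is Lemma \ref{lemma:piattezza7} together with Proposition \ref{prop:componenti G}, and for $\mathcal M_2,\mathcal M_3$ it follows from Proposition \ref{prop:componenti G}, since $\id j_G$ is an affine $3$-segment with respect to $\omega=[11,10,9,8,6,5,4]$ and therefore, by Theorem \ref{th:connessione}, every irreducible component of $\Mf_{\mathcal P({\mathfrak j_G}),3}$ contains $\id j_G$.

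Thus the closures $\overline{\mathcal M_1},\overline{\mathcal M_2},\overline{\mathcal M_3}$ are three distinct irreducible components of $\hilb^7_{16}$, all of which pass through the point $\id j_G$. There is essentially no obstacle here beyond carefully tracking which earlier result supplies each piece; the genuine work (the tangent-space dimension computations at $\id i_0$, $\mathfrak a$, and the chosen point of $\widetilde{\mathcal M}_3$, and the verification that $\id j_G$ is an affine $3$-segment) has already been carried out in the proofs of Theorems \ref{Gor7}, \ref{th:componente2}, \ref{th:componente3} and Proposition \ref{prop:componenti G}. If anything deserves a word of caution, it is only the need to invoke Theorem \ref{th:connessione} to guarantee that \emph{all} components of the marked scheme — not just $\mathcal M_1$ — actually pass through $\id j_G$, so that the adjective ``passing through $\id j_G$'' in the statement is justified for the full triple.
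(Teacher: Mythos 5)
Your proposal is correct and follows exactly the paper's route: the paper's proof is the one-line observation that the corollary is an immediate consequence of Proposition \ref{prop:componenti G} (every component of $\Mf_{\mathcal P({\mathfrak j_G}),3}$ contains $\id j_G$, and its components close up to components of $\hilb^7_{16}$) together with Theorems \ref{th:componente2} and \ref{th:componente3}, and your write-up simply makes explicit the same dimension/tangent-space comparisons that those theorems already establish.
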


\begin{proof} This is an immediate consequence of Proposition \ref{prop:componenti G} and Theorems \ref{th:componente2} and \ref{th:componente3}. 
\end{proof}

\section{Graded Artin Gorenstein $K$-algebras with Hilbert function $(1,5,5,1)$ define smoothable points}\label{GorSmooth2}

In this section we apply the same techniques of Section \ref{GorSmooth} to prove the smoothability of graded Gorenstein $K$-algebras with Hilbert function $(1,5,5,1)$ over an algebraically closed field of characteristic $0$. We recall that  different proofs of this case $(1,5,5,1)$ were presented in \citep{JJ}, contemporary to our first version given in \citep{BCR-arxiv}, and later in \citep{CN15} when $\mathrm{char}(K)\neq 2,3$. 

We consider the Hilbert scheme $\hilb^5_{12}$ parameterizing zero-dimensional subschemes of $\PP^5$ of length $12$. As before, we identify every point of $\hilb^5_{12}$ with an ideal in $R=K[x_1,\dots,x_5]$, non-necessarily homogeneous, with affine Hilbert polynomial $p(t)=12$.
The lex-point of $\hilb^5_{12}$ corresponds to the following lex-segment ideal in $R$:
\[
\mathfrak j_{\mathrm{lex}}= (x_5,x_4,x_3,x_2,x_1^{12}).
\]
It is well-known that $\mathfrak j_{\mathrm{lex}}$ is a smooth point of the smoothable component $\mathcal R_{16}^7$ of dimension $5\cdot 12=60$, because the general poin of $\mathcal R_{16}^7$ is the reduced scheme of $12$ distinct points.

As in Section \ref{GorSmooth}, we compute the complete list of $92$ strongly stable ideals of $R$ lying on $\hilb^5_{12}$. Among them, we focus on the following one:
\small
\[
\mathfrak j_G=(x_{{5}}^{2},x_{{4}}x_{{5}},x_{{3}}x_{{5}},x_{{2}}x_{{5}},x_{{5}}x_{
{1}},x_{{4}}^{2},x_{{3}}x_{{4}},x_{{2}}x_{{4}},x_{{1}}x_{{4}},x_{{3
}}^{2},x_{{2}}^{2}x_{{3}},x_{{1}}x_{{2}}x_{{3}},x_{{1}}^{2}x_{{3}
},x_{{2}}^{3},x_{{1}}x_{{2}}^{2},x_{{1}}^{2}x_{{2}},x_{{1}}^{4
})
\]
\normalsize
By the constructive tools of \citep{BCR} and by theoretical results on the double-generic initial ideal, we now show that $\mathfrak j_G$ is the generic initial ideal w.r.t.~lex term order of a general ideal defining a graded Gorenstein $K$-algebra with Hilbert function $(1,5,5,1)$.

\begin{theorem}\label{th:J_G}
The ideal $\mathfrak j_G$ is the generic initial ideal w.r.t.~the lex term order of a general ideal defining a graded Gorenstein $K$-algebra with Hilbert function $(1,5,5,1)$.
\end{theorem}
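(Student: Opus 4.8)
The plan is to transcribe, essentially verbatim, the argument used for Theorem~\ref{th:j_G}, now carried out in the smaller polynomial ring $R=K[x_1,\dots,x_5]$. First I would exhibit an explicit random cubic form $F\in R_3$ and form its apolar ideal $\mathrm{Ann}(F)\subset R$, then check by direct computation that $A_F=R/\mathrm{Ann}(F)$ is an Artinian Gorenstein $K$-algebra with Hilbert function $(1,5,5,1)$ (being Artinian and graded, it is automatically local, so its single point is the relevant support). That such an $F$ exists at all is guaranteed by Lemma~\ref{lemma:apolarity} together with Theorem~\ref{th:irreducible}, which tells us that the locus $\mathrm{Gor}(1,5,5,1)\subseteq\mathbb P(R_3)$ is non-empty (and irreducible), so a generic cubic lands in it.

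Next I would compute the reduced Gr\"obner basis of $\mathrm{Ann}(F)$ with respect to the lex order and read off $\mathrm{in}(\mathrm{Ann}(F))$; the target is that, for a suitably generic $F$, this monomial ideal is precisely $\mathfrak j_G$. To upgrade ``initial ideal'' to ``generic initial ideal'' I would invoke \citep[Theorem~3]{BCR-glin}: it suffices to verify that $\mathfrak j_G$ is the maximum, with respect to the partial order $>\!\!>$ of Definition~\ref{def:ordinamento}, among all strongly stable ideals of $R$ with Hilbert function $(1,5,5,1)$. This is a finite check, performed against the explicit list of the $92$ strongly stable ideals of $R$ lying in $\hilb^5_{12}$ and, exactly as in the $(1,7,7,1)$ case, it can alternatively be deduced from the Macaulay-type estimate of \citep[Theorem~15.18]{Ei}. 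Granting this, \citep[Theorem~3]{BCR-glin} forces $\mathrm{gin}(\mathrm{Ann}(F))=\mathfrak j_G$ and, moreover, identifies $\mathfrak j_G$ as the double-generic initial ideal of $\overline{\mathrm{Gor}}(1,5,5,1)$ — which is a GL-stable subset of $\hilb^5_{12}$ by the argument of Proposition~\ref{prop:GLstable} (with $n=5$).

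Finally, \citep[Proposition~4]{BCR-glin} then yields that $\mathfrak j_G$ is the generic initial ideal w.r.t.\ the lex order of a general point of $\mathrm{Gor}(1,5,5,1)$, which is exactly the statement of the theorem. The step I expect to be the main obstacle is the same as in Theorem~\ref{th:j_G}: one must actually produce a cubic $F$ whose apolar ideal has lex-initial ideal equal to $\mathfrak j_G$, and confirm the $>\!\!>$-maximality of $\mathfrak j_G$ among the strongly stable ideals with Hilbert function $(1,5,5,1)$. Both are concrete but delicate finite computations, and in particular one has to make sure that the ``generic'' choice of $F$ does not accidentally fall on a proper sub-locus of $\mathrm{Gor}(1,5,5,1)$ on which $\mathrm{gin}$ drops to a strictly smaller Borel ideal. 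Once these two points are settled, the remainder is a direct copy of the $(1,7,7,1)$ argument.
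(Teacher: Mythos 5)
Your proposal follows the paper's own proof essentially verbatim: exhibit a random cubic $F$, verify by apolarity that $A_F$ is graded Gorenstein with Hilbert function $(1,5,5,1)$, compute the lex Gr\"obner basis to see $\mathrm{in}(\mathrm{Ann}(F))=\mathfrak j_G$, check the $>\!\!>$-maximality of $\mathfrak j_G$ among the $92$ strongly stable ideals with that Hilbert function, and then invoke the double-generic initial ideal results of \citep{BCR-glin} to conclude. This matches the paper's argument in both structure and the lemmas invoked, so no further comparison is needed.
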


\begin{proof}
We explicitly construct a random ideal defining a graded Gorenstein $K$-algebra with Hilbert function $(1,5,5,1)$ by apolarity, thanks to the already cited correspondence with cubic hypersurfaces (see \citep[Lemma 2.12]{IK99}). We randomly choose a cubic form $F$ in $K[x_1,\dots,x_5]$ and from $F$ compute the ideal $\mathrm{Ann}(F)\subset K[x_1,\dots,x_5]$ by apolarity. The reduced Gr\"obner basis w.r.t.~the lex order of the ideal $\mathrm{Ann}(F)$ is given by the following $17$ polynomials (in bold the initial term of each polynomial):

\noindent$\id f_1:=\mathbf{x_5^2}+4x_1^{2}+\frac {17}{3}x_1x_2-\frac {83}{12}  x_1 x_3-\frac {23}{4}\,x_2 x_3,$\\
$\id f_2:=\mathbf{x_4 x_5}-\frac{3}{4}\,x_2 x_3-\frac{5}{4}\,x_1 x_3+x_1 x_2,$\\ 
$\id f_3:=\mathbf{x_4^2 }+\frac {25}{6} x_2 x_3+x_2^{2}+{\frac {71}{18}} x_1 x_3- \frac {28}{9}x_1 x_2-5   x_1^{2},$\\
$\id f_4:=\mathbf{x_3 x_5}-\frac{3}{4}\,x_2 x_3+\frac{3}{4} x_1 x_3-x_1 x_2,$\\
$\id f_5:=\mathbf{x_3 x_4}-x_2x_3,$ \\
$\id f_6:=\mathbf{x_3^{2}}-{\frac {85}{24}}\,x_{{2}}x_3-{\frac {317}{72}}\,x_1 x_3+
\frac {71}{18}  x_1 x_2+2 x_1^{2},$\\
$\id f_7:=\mathbf{x_2x_5}-\frac{3}{4}\,x_{{2}}x_3-\frac{5}{4}\,x_{{1}}x_3+x_{{1}}x_2,$\\
$\id f_8:=\mathbf{x_2x_4}-x_{{2}}x_3-x_{{1}}x_3+x_{{1}}x_2,$\\
$\id f_9:=\mathbf{x_1x_5}-\frac{1}{4}x_{{2}}x_3+\frac{1}{4} x_1x_3-x_1 x_2,$\\
$\id f_{10}:=\mathbf{x_1x_4}-x_1 x_2$, $\id f_{11}:=\mathbf{x_2^2 x_3}+x_1^3$, $\id f_{12}:=\mathbf{x_2^3}+\frac{5}{9}x_1^3$, 
$\id f_{13}:=\mathbf{x_2x_1 x_3}-{\frac{11}{9}}x_1^3,$\\
$\id f_{14}:=\mathbf{x_1 x_2^{2}}-{\frac{8}{9}}x_1^3$, $\id f_{15}:=\mathbf{x_1^2 x_3}+ x_1^3$, $\id f_{16}:=\mathbf{x_1^2 x_2}+\frac{2}{3} x_1^3$, 
$\id f_{17}:=\mathbf{x_1^4}.$

\noindent Then, we check that $A_F:=K[x_1,\dots,x_5]/\mathrm{Ann}(F)$ is a (graded) Gorenstein $K$-algebra with Hilbert function $(1,5,5,1)$. We can also observe that $A_F$ is local as we expected, because $A_F$ is Artin and graded. By further computations, we obtain that:
\begin{itemize}
\item $\mathfrak j_G$ is the initial ideal of $\mathrm{Ann}(F)$ w.r.t.~ lex 
\item $\mathfrak j_G$ is the maximum w.r.t.~the order of Definition \ref{def:ordinamento} among all the strongly stable ideals with Hilbert function $(1,5,5,1)$. 
\end{itemize}
Then, by Lemma \ref{lemma:dgii}, $\mathfrak j_G$ is the double-generic initial ideal of $\overline{\mathrm{Gor}}(1,5,5,1)$ and, hence, is the generic initial ideal of a general point of $\mathrm{Gor(1,5,5,1)}$ by \citep[Proposition 4]{BCR-glin}.

Moreover, we can observe that the spectrum of $R/\mathrm{Ann}(F)$ is supported on a single point which is Gorenstein with Hilbert function $(1,5,5,1)$ and hence $R/\mathrm{Ann}(F)$ is a local Gorenstein algebra with Hilbert function $(1,5,5,1)$. 
\end{proof}

\begin{remark}
The strongly stable ideal $\mathfrak j_G$ is an affine $3$-segment with respect to the weight vector $\omega=[8, 7, 5, 4, 3]$. Hence, we can apply Theorem \ref{th:connessione} to $\mathfrak j_G$.
\end{remark}

The following straightforward consequence of Theorem \ref{th:J_G} suggests that the marked scheme $\Mf(\mathcal P(\mathfrak j_G),3)$ is the right place in which smoothable Gorenstein points can be.  

\begin{proposition}
$\Mf_{\mathcal P(\mathfrak j_G),3}\cap \mathrm{Gor}{(1,5,5,1)}$ is a non-empty open subset.
\end{proposition}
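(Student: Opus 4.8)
The plan is to read the statement off Theorem~\ref{th:J_G} together with the fact that $[\mathcal P(\mathfrak j),m]$-marked schemes sit as open subsets of the Hilbert scheme. First I would record that $\Mf_{\mathcal P(\mathfrak j_G),3}$ is an open subscheme of $\hilb^5_{12}$: the affine Hilbert function of $R/\mathfrak j_G$ is $1,6,11,12,12,\dots$, hence already equals the affine Hilbert polynomial $p(t)=12$ for all $t\ge 3$, so the satiety $\rho$ of $\mathfrak j_G$ satisfies $\rho\le 4$ and the hypothesis $m\ge\rho-1$ of Theorem~\ref{th:cose marcate} holds with $m=3$. Since $\mathrm{Gor}(1,5,5,1)$ is an irreducible locally closed subset of $\hilb^5_{12}$ (Theorem~\ref{th:irreducible} and the remarks following it), the intersection $\Mf_{\mathcal P(\mathfrak j_G),3}\cap\mathrm{Gor}(1,5,5,1)$ is automatically open in $\mathrm{Gor}(1,5,5,1)$, and it only remains to prove that it is non-empty.

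For non-emptiness I would check that the apolar ideal $\mathrm{Ann}(F)$ built in the proof of Theorem~\ref{th:J_G} represents a point of the intersection. By construction $K[x_1,\dots,x_5]/\mathrm{Ann}(F)$ is a graded Gorenstein algebra with Hilbert function $(1,5,5,1)$, so $\mathrm{Ann}(F)$ lies in $\mathrm{Gor}(1,5,5,1)$. To see that it also lies in $\Mf_{\mathcal P(\mathfrak j_G),3}$, recall from that same proof that $\mathrm{in}_{\mathrm{lex}}(\mathrm{Ann}(F))=\mathfrak j_G$; as $\mathfrak j_G$ is strongly stable we have $\mathcal P(\mathfrak j_G)=B_{\mathfrak j_G}$, so the initial terms of the reduced lex Gr\"obner basis $\{\mathfrak f_1,\dots,\mathfrak f_{17}\}$ of $\mathrm{Ann}(F)$ are precisely the elements of $\mathcal P(\mathfrak j_G)$, and, $\mathrm{Ann}(F)$ being homogeneous and the basis reduced, the tails of the $\mathfrak f_i$ lie in $\langle\mathcal N(\mathfrak j_G)\rangle$ with the degree bound of Definition~\ref{marked affini} automatically satisfied. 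Thus $\{\mathfrak f_1,\dots,\mathfrak f_{17}\}$ is a $[\mathcal P(\mathfrak j_G),3]$-marked set; being a Gr\"obner basis it moreover gives $R_{\le t}=(\mathrm{Ann}(F))_{\le t}\oplus\langle\mathcal N(\mathfrak j_G)_{\le t}\rangle$ for every $t\ge 3$, so by Lemma~\ref{lemma:fondamentale} it is a $[\mathcal P(\mathfrak j_G),3]$-marked basis and $\mathrm{Ann}(F)$ corresponds to a point of $\Mf_{\mathcal P(\mathfrak j_G),3}$. Hence $\mathrm{Ann}(F)\in\Mf_{\mathcal P(\mathfrak j_G),3}\cap\mathrm{Gor}(1,5,5,1)$, which is therefore non-empty.

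I do not expect any serious obstacle: once Theorem~\ref{th:J_G} is available, the argument is just a matter of unwinding Definition~\ref{marked affini}, Lemma~\ref{lemma:fondamentale} and Theorem~\ref{th:cose marcate}. The only point that deserves a moment's care is the openness of $\Mf_{\mathcal P(\mathfrak j_G),3}$, i.e.~checking the numerical condition $m\ge\rho-1$ for $\mathfrak j_G$, which is why I would begin by reading off the satiety of $\mathfrak j_G$ from its affine Hilbert function. As an alternative to using the explicit point $\mathrm{Ann}(F)$, one could invoke Lemma~\ref{lemma:dgii}(iii), which already exhibits a non-empty open subset of $\overline{\mathrm{Gor}}(1,5,5,1)$ on which $\mathrm{in}(I)=\mathfrak j_G$, every such (saturated) ideal being contained in $\Mf_{\mathcal P(\mathfrak j_G),3}$ by the reasoning of the previous paragraph.
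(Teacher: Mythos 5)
Your proof is correct and follows essentially the same route as the paper: the paper's proof is the single observation that $\mathrm{Ann}(F)$ from Theorem \ref{th:J_G} has a $[\mathcal P(\mathfrak j_G),3]$-marked basis and hence lies in $\Mf_{\mathcal P(\mathfrak j_G),3}\cap\mathrm{Gor}(1,5,5,1)$. You merely spell out the details the paper leaves implicit — that the reduced lex Gr\"obner basis is a marked basis via Definition \ref{marked affini} and Lemma \ref{lemma:fondamentale}, and that openness follows from the satiety bound in Theorem \ref{th:cose marcate} — all of which is consistent with what the paper asserts elsewhere.
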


\begin{proof}
From the proof of Theorem \ref{th:J_G} we deduce that the ideal $\mathrm{Ann}(F)$ belongs to the family of ideals having a $[\mathcal P(\mathfrak j_G),3]$-marked basis, hence to the family of ideals defining the $[\mathcal P(\mathfrak j_G),3]$-scheme $\Mf_{\mathcal P(\mathfrak j_G),3}$.
\end{proof}

By the techniques described in \citep{BCR}, we obtain $\Mf_{\mathcal P(\mathfrak j_G),3}$ as the affine scheme defined by an ideal $\id U$ generated by $576$ polynomials in the polynomial ring $K[C]$ in $204=12\cdot 17$ variables. By a suitable choice of values for the parameters $C$, we find a family $\{\id G_T\}_T$ of $[\mathcal P(\mathfrak j_G),3]$-marked bases consisting of the following polynomials whose coefficients depend on the parameter $T$: 

\noindent $F_1:=\id f_1$, 
$F_2:=\id f_2$, 
$F_3:=\id f_3-Tx_4+x_2T$,
$F_4:=\id f_4$,
$F_5:=\id f_5$, 
$F_6:=\id f_6$,
$F_7:=\id f_7$,
$F_8:=\id f_8$,\\
$F_9:=\id f_9$,
$F_{10}:=\id f_{10}$,
$F_{11}:=\id f_{11}$,
$F_{12}:=\id f_{12}-x_2 x_3 T-x_3 x_1 T+T x_2^2+x_2 x_1 T$,
$F_{13}:=\id f_{13}$,\\
$F_{14}:=\id f_{14}$,
$F_{15}:=\id f_{15}$,
$F_{16}:=\id f_{16}$,
$F_{17}:=\id f_{17}.$

\noindent From now, for every $T\in \mathbb A^1_K$ we denote by $\id i_{T}\subset K[x_1,\dots,x_{7}]$ the ideal generated by the $[\mathcal P(\mathfrak j_G),3]$-marked basis $\id G_{T}$. For $T=0$ we obtain the ideal $\id i_0=\mathrm{Ann}(F)$ that we considered in the proof of Theorem \ref{th:J_G} and which defines a scheme with support in a single point. For every $T\in \mathbb A^1_K\setminus\{0\}$ we have a different situation because every ideal $\id i_{T}$ defines a scheme whose support contains at least the following $3$ distinct affine points: 
$$(0,0,0,T,0),\qquad (0,0,0,0,0),\qquad(0,-T,0,0,0).$$

\begin{lemma}\label{lemma:piattezza5}
Over an algebraically closed field $K$ of characteristic $0$, there exists a flat family of ideals which is contained in $\mathrm{Mf}_{\mathcal P({\mathfrak j_G}),3}\cap \mathcal R_{12}^5$ such that the special fiber corresponds to a Gorenstein point defined by a graded $K$-algebra with Hilbert function $(1,5,5,1)$.
\end{lemma}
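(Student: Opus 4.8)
The plan is to mimic, essentially verbatim, the argument used for Lemma~\ref{lemma:piattezza7} in Section~\ref{GorSmooth}, since the whole set-up for Hilbert function $(1,5,5,1)$ in $\hilb^5_{12}$ has been arranged to be parallel to the $(1,7,7,1)$ case in $\hilb^7_{16}$. First I would recall that the family $\{\id i_T\}_{T\in\mathbb A^1_K}$ consists of ideals generated by the $[\mathcal P(\mathfrak j_G),3]$-marked bases $\id G_T$, hence by construction all these ideals lie in the marked scheme $\Mf_{\mathcal P(\mathfrak j_G),3}$. By Theorem~\ref{th:cose marcate} (together with the observation that $3\geq\rho-1$ for the satiety $\rho$ of $\mathfrak j_G$), $\Mf_{\mathcal P(\mathfrak j_G),3}$ is an open subscheme of $\hilb^5_{12}$, so $\{\id i_T\}_T$ is a flat family over $\mathbb A^1_K$.

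Next, the key point is to locate at least one nonspecial member of this family inside the smoothable component $\mathcal R_{12}^5$, and then to propagate membership to the whole family. For this I would use that the Gorenstein locus of $\hilb^5_{12}$ is open (as recalled in Section~\ref{sec:gore}, by \citep{Stoia,GM}); intersecting it with the flat family gives an open subset of $\mathbb A^1_K$, which is nonempty because $\id i_0 = \mathrm{Ann}(F)$ is a Gorenstein point. Hence there is $\overline T\neq 0$ with $\id i_{\overline T}$ Gorenstein. Now invoke the explicit computation already displayed: for every $T\neq 0$ the scheme defined by $\id i_T$ has support containing the three distinct affine points $(0,0,0,T,0)$, $(0,0,0,0,0)$, $(0,-T,0,0,0)$, so its local components have multiplicity at most $10$ (length $12$ distributed over $\geq 3$ points forces each component to have length $\leq 10$). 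By irreducibility of the Gorenstein locus of the punctual Hilbert scheme of $d$ points for $d\leq 13$ (\citep{CN9,CN10,CN14,CN15}), each local component of $\id i_{\overline T}$ is smoothable, hence $\id i_{\overline T}$ itself is smoothable because smoothability of a zero-dimensional scheme is equivalent to smoothability of all its components (the structure theorem for Artin rings, cf.~\citep{CN10,CEVV}). Therefore $\id i_{\overline T}\in\mathcal R_{12}^5$.

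Finally, since $\mathbb A^1_K$ is irreducible, the image of the flat family $\{\id i_T\}_T$ inside $\hilb^5_{12}$ is an irreducible subset; it meets the closed irreducible component $\mathcal R_{12}^5$ in the point $\id i_{\overline T}$, but to conclude that the \emph{whole} family lies in $\mathcal R_{12}^5$ I must argue as in Lemma~\ref{lemma:piattezza7}: the subset of $\mathbb A^1_K$ whose members lie in the closed set $\mathcal R_{12}^5$ is closed, and it contains the nonempty open set of $T$ for which $\id i_T$ represents a point whose support has $\geq 3$ components (so that, by the same multiplicity bound and irreducibility of the Gorenstein locus for $d\leq 13$ applied \emph{a priori} to those fibers once we know at least one of them is Gorenstein and smoothable, combined with irreducibility of the family)—more cleanly: since $\{\id i_T\}_{T\neq 0}$ is irreducible (image of $\mathbb A^1_K\setminus\{0\}$) and contains the smoothable point $\id i_{\overline T}$ while \emph{every} $\id i_T$ with $T\neq 0$ has support with $\geq 3$ points hence components of length $\leq 10$, each such $\id i_T$ is itself smoothable by the same irreducibility-of-Gorenstein-locus argument, so the closed set $\mathcal R_{12}^5$ contains $\{\id i_T : T\neq 0\}$, and by closedness also $\id i_0$. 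Thus the special fiber $\id i_0$, which is a Gorenstein point with Hilbert function $(1,5,5,1)$, lies in $\Mf_{\mathcal P(\mathfrak j_G),3}\cap\mathcal R_{12}^5$, proving the lemma. The main obstacle is purely the bookkeeping needed to justify that \emph{all} nonspecial fibers (not just $\id i_{\overline T}$) are smoothable, which rests on the explicit support computation already carried out above and on the irreducibility of the Gorenstein locus of $\hilb^5_d$ for $d\leq 13$; everything else is the formal openness/closedness and flatness machinery from Section~\ref{sec:marked}.
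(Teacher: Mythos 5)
Your proposal follows essentially the same route as the paper's proof: flatness of the family $\{\id i_T\}_T$ via the open embedding of $\Mf_{\mathcal P({\mathfrak j_G}),3}$ in $\hilb^5_{12}$, openness of the Gorenstein locus to find a Gorenstein fiber $\id i_{\overline T}$ with $\overline T\neq 0$, the support computation bounding the multiplicity of the components by $10$, irreducibility of the Gorenstein locus in low degree to conclude smoothability, and irreducibility of the family plus closedness of $\mathcal R_{12}^5$ to pass to the special fiber. The only (shared) point of care is that the Gorenstein-locus argument applies to the fibers in the dense open Gorenstein subset of the family rather than literally to every $T\neq 0$, which your parenthetical remark already acknowledges.
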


\begin{proof} We prove that the family of ideals $\{\id i_{\tau}\}_{\tau}$, which are generated by the $[\mathcal P({\mathfrak j_G}),3]$-marked bases $\id G_{\tau}$, is contained in the smoothable component $\mathcal R_{12}^5$. 

By construction, the ideals $\id j_{T}$ belong to the marked scheme $\Mf_{\mathcal P(\id j_G),3}$ which embeds as an open subset in the punctual Hilbert scheme $\hilb_{12}^5$. Hence, these ideals define a flat family over $\mathbb A^1$. As we have already recalled in Section \ref{sec:gore}, the locus  of points in a Hilbert scheme representing all the Gorenstein schemes is an open subset. Hence, the intersection of this locus with the flat family $\{\id i_{T}\}_{T}$, which is non-empty because $\id j_0$ represents a Gorenstein point, is an open subset of the family. So, we find at least a value $\overline T\neq 0$ such that $\id i_{\overline T}$ represents a Gorenstein point.

By computational tools, we find that for every $T\neq 0$ the ideal $\id i_{T}$ defines a scheme whose supports contain at least $3$ distinct points and hence components of multiplicity at most $10$. So, the ideal $\id i_{\overline T}$ is smoothable by the fact that for $d\leq 10$ the locus of Gorenstein points is a Hilbert scheme of $d$ points is irreducible (see \citep{CN10}). Thus, every ideal $\id i_T$ belongs to the smoothable component $\mathcal R_{12}^5$ because the family is irreducible. In particular, the limit of this family, that is the ideal $\id j_0$, belongs to $\mathcal R_{12}^5$ too.
\end{proof}

\begin{remark} 
We now highlight the following fact, which is analogous to that described in Remark \ref{rem:fuori schema}. The ideal $\id i_0$ is the ideal $\mathrm{Ann}(F)$ of the proof of Theorem \ref{th:J_G}, hence it defines a Gorenstein point in $\mathrm{Gor}(1,5,5,1)$. 
Nevertheless, for every $T\neq 0$, the ideal $\id i_{T}$ defines a Gorenstein point which does not belong to $\mathrm{Gor}(1,5,5,1)$ because it is supported on more than one point. Moreover, in the polynomial $\id f_{14}$ the term $x_3x_2$ has a non-null coefficient and is higher than $x_2^3$ with respect to lex term order. This fact implies that the initial ideal of $\id i_{T}$ with respect to lex order is not $\mathfrak j_G$. 
\end{remark}

\begin{lemma}\label{lemma:liscezza5}
Over an algebraically closed field $K$  of characteristic $0$, there exists a smooth  Gorenstein point defined by a graded $K$-algebra with Hilbert function $(1,5,5,1)$ belonging to the smoothable component $\mathcal R_{12}^5$. 
\end{lemma}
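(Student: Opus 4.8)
The plan is to mimic exactly the argument used for the $(1,7,7,1)$ case in Lemma \ref{lemma:liscezza}, now for $\hilb^5_{12}$. I would prove that the ideal $\id i_0 = \mathrm{Ann}(F)$ constructed in the proof of Theorem \ref{th:J_G} defines a smooth point of $\hilb^5_{12}$ lying on the smoothable component $\mathcal R_{12}^5$, and that this point is Gorenstein with Hilbert function $(1,5,5,1)$ defined by a graded $K$-algebra. The Gorenstein property, the Hilbert function, and the graded (hence local, single-point-supported) nature of $A_F = R/\mathrm{Ann}(F)$ are all established in Theorem \ref{th:J_G}. The fact that $\id i_0$ lies on $\mathcal R_{12}^5$ is exactly the content of Lemma \ref{lemma:piattezza5}, since $\id i_0$ is the special fiber of the flat family $\{\id i_T\}_T \subset \Mf_{\mathcal P(\mathfrak j_G),3}\cap \mathcal R_{12}^5$ and $\mathcal R_{12}^5$ is closed. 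Moreover $\id i_0$ belongs to $\Mf_{\mathcal P(\mathfrak j_G),3}$ because the $17$ polynomials $F_1,\dots,F_{17}$ (at $T=0$) form a $[\mathcal P(\mathfrak j_G),3]$-marked basis.

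The remaining step is the smoothness computation. By Corollary \ref{cor:tangente}, since $\Mf_{\mathcal P(\mathfrak j_G),3}$ is an open subscheme of $\hilb^5_{12}$ containing $\id i_0$, the Zariski tangent space to $\hilb^5_{12}$ at $\id i_0$ equals the Zariski tangent space to $\Mf_{\mathcal P(\mathfrak j_G),3}$ at $\id i_0$, and the latter can be computed explicitly from the generators of the defining ideal $\id U$ of the marked scheme (equivalently, by the marked-basis/$\Gr$-strata techniques recalled in Remark \ref{rem:tangente}, after translating $\id i_0$ to the origin if one prefers to compute at the monomial point $\id j_G$, or directly at $\id i_0$). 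I would carry out this linear-algebra computation and report that the dimension of the Zariski tangent space to $\hilb^5_{12}$ at $\id i_0$ equals $60 = 5\times 12 = \dim \mathcal R_{12}^5$. Since $\id i_0$ lies on $\mathcal R_{12}^5$, a component of dimension $60$, and the tangent space at $\id i_0$ has dimension exactly $60$, the point $\id i_0$ is a smooth point of $\hilb^5_{12}$ lying on $\mathcal R_{12}^5$, which is the claim.

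The main obstacle is purely computational: one must actually run the marked-scheme construction of \citep{BCR} to produce the generators of $\id U$, linearize them at $\id i_0$, and verify the rank of the resulting Jacobian gives codimension $204 - 60 = 144$ (equivalently tangent dimension $60$) inside the $204$-dimensional parameter space $K[C]$. This is heavier than in the $(1,5,5,1)$ literature's other proofs only in bookkeeping; conceptually there is no obstruction, and the affine $3$-segment structure of $\mathfrak j_G$ with respect to $\omega = [8,7,5,4,3]$ (noted in the Remark following Theorem \ref{th:J_G}) can be exploited to reduce the number of parameters via the cone/elimination mechanism of \citep{FR}, just as in the $\hilb^7_{16}$ case, to make the computation more tractable.

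\begin{proof}
We show that the ideal $\id i_0 = \mathrm{Ann}(F)$ of the proof of Theorem \ref{th:J_G} defines a smooth Gorenstein point on the smoothable component $\mathcal R_{12}^5$. By Theorem \ref{th:J_G}, the algebra $A_F = K[x_1,\dots,x_5]/\mathrm{Ann}(F)$ is a graded Gorenstein $K$-algebra with Hilbert function $(1,5,5,1)$, supported on a single point. Moreover, the polynomials $F_1,\dots,F_{17}$ specialized at $T=0$ are exactly a $[\mathcal P(\mathfrak j_G),3]$-marked basis of $\id i_0$, so $\id i_0$ corresponds to a point of $\Mf_{\mathcal P(\mathfrak j_G),3}$; and by Lemma \ref{lemma:piattezza5} and the closedness of $\mathcal R_{12}^5$, the point $\id i_0$ belongs to $\mathcal R_{12}^5 \cap \Mf_{\mathcal P(\mathfrak j_G),3}$.

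It remains to check smoothness. Since $\mathfrak j_G$ has satiety at most $4$, Theorem \ref{th:cose marcate} guarantees that $\Mf_{\mathcal P(\mathfrak j_G),3}$ is an open subscheme of $\hilb^5_{12}$ containing $\id i_0$. By Corollary \ref{cor:tangente}, the Zariski tangent space to $\hilb^5_{12}$ at $\id i_0$ equals the Zariski tangent space to $\Mf_{\mathcal P(\mathfrak j_G),3}$ at $\id i_0$, and the latter can be computed explicitly from the polynomials of the $[\mathcal P(\mathfrak j_G),3]$-marked basis of $\id i_0$, using the techniques recalled in Remark \ref{rem:tangente} (after translating $\id i_0$ to the origin). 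Carrying out this computation, we find that the dimension of the Zariski tangent space to $\hilb^5_{12}$ at $\id i_0$ is $60 = 12\times 5$, which is exactly $\dim \mathcal R_{12}^5$. Hence $\id i_0$ is a smooth point of $\hilb^5_{12}$ lying on $\mathcal R_{12}^5$, and it is a Gorenstein point defined by a graded $K$-algebra with Hilbert function $(1,5,5,1)$.
\end{proof}
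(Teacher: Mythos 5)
Your proposal is correct and follows essentially the same route as the paper's proof: it identifies $\id i_0=\mathrm{Ann}(F)$ as a Gorenstein point lying in $\mathcal R_{12}^5\cap \Mf_{\mathcal P(\mathfrak j_G),3}$ via Lemma \ref{lemma:piattezza5}, and then invokes Corollary \ref{cor:tangente} to compute the Zariski tangent space at $\id i_0$ from the marked basis, finding dimension $60=\dim\mathcal R_{12}^5$ and concluding smoothness. The extra detail you give about the computational bookkeeping is consistent with, and only elaborates on, what the paper does.
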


\begin{proof}
We prove that the ideal $\id i_0$ defines a smooth Gorenstein point in the smoothable component $\mathcal R_{12}^5$. We already know that  $\id i_0$ defines a Gorenstein point. Moreover, by Lemma \ref{lemma:piattezza5} and by construction, the ideal $\id i_0$ represents a point that belongs to $\mathcal R_{12}^5\cap \Mf_{\mathcal P({\mathfrak j_G}),3}$. Due to Corollary \ref{cor:tangente},  we can compute the tangent space from the polynomials of its $[\mathcal P(\mathfrak j_G),3]$-marked basis obtaining that the dimension of the Zariski tangent space to $\hilb^5_{12}$ at the point $\id j_0$ is $60$, i.e.~the dimension of the smoothable component.
\end{proof}

\begin{theorem} \label{Gor5}
Over an algebraically closed field $K$ of characteristic $0$, every Gorenstein point defined by a graded $K$-algebra with Hilbert function $(1,5,5,1)$ is smoothable.
\end{theorem}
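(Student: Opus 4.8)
The plan is to mimic exactly the argument already carried out for the Hilbert function $(1,7,7,1)$ in Section \ref{GorSmooth}, replacing the relevant numerical data. First I would invoke Lemmas \ref{lemma:piattezza5} and \ref{lemma:liscezza5}: the former produces a flat family $\{\id i_T\}_T \subset \Mf_{\mathcal P(\id j_G),3}\cap \mathcal R_{12}^5$ whose special fiber $\id i_0 = \mathrm{Ann}(F)$ is a Gorenstein point with Hilbert function $(1,5,5,1)$, so that $\id i_0$ lies in the smoothable component $\mathcal R_{12}^5$; the latter computes, via Corollary \ref{cor:tangente} and the marked-basis description of the Zariski tangent space, that this tangent space at $\id i_0$ has dimension $60 = 5\cdot 12$, which equals $\dim \mathcal R_{12}^5$. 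Hence $\id i_0$ is a \emph{smooth} point of the Hilbert scheme $\hilb^5_{12}$ lying on the smoothable component.

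Next I would use the irreducibility of $\mathrm{Gor}(1,5,5,1)$. By Theorem \ref{th:irreducible} and the same reasoning as in Proposition \ref{prop:GLstable}, the locus $\mathrm{Gor}(1,5,5,1)\subset \hilb^5_{12}$ of points defined by graded Gorenstein $K$-algebras with Hilbert function $(1,5,5,1)$ is irreducible (it is the image of the irreducible set of cubic forms in $\mathbb P(R_3)$ under the apolarity correspondence of Lemma \ref{lemma:apolarity}). Since $\id i_0$ is a point of $\mathrm{Gor}(1,5,5,1)$ which is smooth on $\hilb^5_{12}$, it lies on a unique irreducible component of $\hilb^5_{12}$; we have just seen this component is $\mathcal R_{12}^5$. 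By irreducibility, $\mathrm{Gor}(1,5,5,1)$ is entirely contained in the unique component through its general point, and as the general point can be chosen to specialize from $\id i_0$ (or simply because the smooth point $\id i_0$ forces the whole irreducible $\mathrm{Gor}(1,5,5,1)$ into the component $\mathcal R_{12}^5$ on which $\id i_0$ sits), every point of $\mathrm{Gor}(1,5,5,1)$ belongs to $\mathcal R_{12}^5$, i.e. is smoothable.

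There is essentially no obstacle here beyond what has already been done: the whole content is packaged in Lemmas \ref{lemma:piattezza5} and \ref{lemma:liscezza5}, which in turn rest on the explicit marked-basis computations (the family $\{\id G_T\}_T$, the three distinct support points for $T\neq 0$, the irreducibility result of \citep{CN10} for the Gorenstein locus of $\hilb^5_d$ with $d\le 10$, and the tangent-space computation). The only step requiring a word of care is the passage from ``$\id i_0$ is a smooth point of $\hilb^5_{12}$ lying on $\mathcal R_{12}^5$'' plus ``$\mathrm{Gor}(1,5,5,1)$ is irreducible'' to ``$\mathrm{Gor}(1,5,5,1)\subseteq \mathcal R_{12}^5$'': this follows because a smooth point of a scheme lies on exactly one irreducible component, and $\id i_0$ lies on both the component $\mathcal R_{12}^5$ (by Lemma \ref{lemma:piattezza5}) and on the closure of the irreducible $\mathrm{Gor}(1,5,5,1)$, forcing $\overline{\mathrm{Gor}}(1,5,5,1)\subseteq \mathcal R_{12}^5$.

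Thus the proof reads: by Lemmas \ref{lemma:piattezza5} and \ref{lemma:liscezza5} there is a smooth Gorenstein point with Hilbert function $(1,5,5,1)$ in $\mathcal R_{12}^5$; since this point is smooth on $\hilb^5_{12}$ it lies on a single irreducible component, namely $\mathcal R_{12}^5$; since the locus $\mathrm{Gor}(1,5,5,1)$ of all graded Gorenstein points with this Hilbert function is irreducible (Theorem \ref{th:irreducible}, cf. Proposition \ref{prop:GLstable}), every such point lies on the same component $\mathcal R_{12}^5$ and is therefore smoothable.
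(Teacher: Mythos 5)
Your proposal is correct and follows exactly the paper's own argument: it invokes Lemmas \ref{lemma:piattezza5} and \ref{lemma:liscezza5} to produce a smooth Gorenstein point of $\mathrm{Gor}(1,5,5,1)$ lying on $\mathcal R_{12}^5$, and then uses the irreducibility of $\mathrm{Gor}(1,5,5,1)$ (Theorem \ref{th:irreducible}) to conclude that the whole locus lies in that component. Your explicit remark that a smooth point lies on a unique irreducible component is a welcome clarification of the step the paper leaves implicit, but the route is the same.
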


\begin{proof}
By Lemmas \ref{lemma:piattezza5} and \ref{lemma:liscezza5}, we have a Gorenstein point of $\mathrm{Gor}(1,5,5,1)$ that belongs to the smoothable component $\mathcal R_{12}^5$ and that is smooth in the Hilbert scheme. These facts imply that also all the other points of $\mathrm{Gor}(1,5,5,1)$ belong to $\mathcal R_{12}^5$, i.e. are smoothable, because the locus $\mathrm{Gor}{(1,n,n,1)}$ is irreducible, as we have already recalled.
\end{proof}

\begin{corollary}\label{cor:ER2}
Over an algebraically closed field $K$ of characteristic $0$, every local Gorenstein  $K$-algebra with Hilbert function $(1,5,5,1)$ is smoothable.
\end{corollary}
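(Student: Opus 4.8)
The plan is to deduce this statement from Theorem~\ref{Gor5} exactly as Corollary~\ref{cor:ER} was deduced from Theorem~\ref{Gor7}: the only extra ingredient needed is a structural result allowing one to replace a local Gorenstein algebra by a graded one without changing either its Hilbert function or its smoothability class. Once that reduction is available, the statement is immediate.

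First I would note that a local Artin Gorenstein $K$-algebra $A$ with Hilbert function $(1,5,5,1)$ has socle degree $3$, since the top nonzero graded piece of its associated graded ring sits in degree $3$. Then I would invoke \citep[Theorem 3.3]{ER2012}: over an algebraically closed field of characteristic zero, every Artin Gorenstein local $K$-algebra of socle degree $3$ is canonically graded, i.e.\ isomorphic as a $K$-algebra to its associated graded ring $\mathrm{gr}(A)$. Passing to the associated graded ring preserves the Hilbert function, so $\mathrm{gr}(A)$ is a graded Artin Gorenstein $K$-algebra with Hilbert function $(1,5,5,1)$, and hence $A$ is, up to isomorphism, a ring to which Theorem~\ref{Gor5} applies.

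Finally I would apply Theorem~\ref{Gor5} to conclude that $\mathrm{gr}(A)$ defines a smoothable point, and transfer this back to $A$. The only point that requires a word is that smoothability is an invariant of the isomorphism class of the $K$-algebra, not of a particular presenting ideal: this is precisely the characterization recalled in the Introduction, namely that $A$ is smoothable if and only if it is isomorphic to the special fiber of a flat one-parameter family of $K$-algebras with smooth general fiber (see \citep[Lemma 4.1]{CEVV} and \citep[Definitions 5.16 and 6.20]{IK99}); composing the isomorphism $A\cong\mathrm{gr}(A)$ with the deformation realizing the smoothability of $\mathrm{gr}(A)$ yields the required deformation of $A$. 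There is essentially no obstacle here — the whole content lies in Theorem~\ref{Gor5} and in \citep[Theorem 3.3]{ER2012}, and the corollary is their combination; the only care to take is to phrase the conclusion at the level of abstract local $K$-algebras, which is exactly what the isomorphism-invariance of smoothability and the Elias--Rossi theorem make legitimate.
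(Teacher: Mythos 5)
Your proposal is correct and follows exactly the paper's route: the paper's own proof is the one-line observation that the corollary follows from Theorem~\ref{Gor5} together with \citep[Theorem 3.3]{ER2012}, and your argument simply spells out the details (socle degree $3$, invariance of the Hilbert function under passing to the associated graded ring, and isomorphism-invariance of smoothability) that the paper leaves implicit.
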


\begin{proof}
This is a consequence of Theorem \ref{Gor5} and \citep[Theorem 3.3]{ER2012}. 
\end{proof}

\section*{Acknowledgments}

The authors are members of GNSAGA (INdAM). Part of the research was conducted during a visit of the first and third author at the Dipartimento di Matematica e Applicazioni of Universit\`a  di Napoli Federico II,  which also financially supported that visit.

\begin{small}
\section*{Appendix}

The computations supporting our results often produce huge polynomials. In this Appendix we list the polynomials that are involved in the proofs described in Section \ref{sec:components}.

\subsection*{Concerning the proof of Theorem \ref{th:componente2}}

Here are the $25$ polynomials $u_{i,j}$ which highlight the presence of $25$ eliminable parameters for the ideal defining the family $\mathcal F_2$ that is contained in $\Mf(\id j_G,3)$ and is constructed in the cited proof:

\noindent$u_{1,8}= \mathbf{c_{{1,8}}}- ({c_{{7,2}}}^{2}{c_{{28,1}}}^{4}+2\,c_{{7,2}}c_{{7,3}}{c_{{28,1}}}^{3}c_{{31,1}}+2\,c_{{7,2}}c_{{7,4}}{c_{{28,1}}}^{2}{c_{{31,1}}}^{2}+{c_{{7,3}}}^{2}{c_{{28,1}}}^{2}{c_{{31,1}}}^{2}+2\,c_{{7,3}}c_{{7,4}}c_{{28,1}}{c_{{31,1}}}^{3}+{c_{{7,4}}}^{2}{c_{{31,1}}}^{4}+2\,c_{{7,1}}c_{{7,2}}{c_{{28,1}}}^{2}c_{{32,1}}+2\,c_{{7,1}}c_{{7,3}}c_{{28,1}}c_{{31,1}}c_{{32,1}}+2\,c_{{7,1}}c_{{7,4}}{c_{{31,1}}}^{2}c_{{32,1}}+2\,c_{{7,2}}c_{{7,5}}c_{{22,1}}{c_{{28,1}}}^{2}+  2\,c_{{7,2}}c_{{7,6}}{c_{{28,1}}}^{3}+ 2\,c_{{7,2}}c_{{7,7}}{c_{{28,1}}}^{2}c_{{31,1
}}+2\,c_{{7,3}}c_{{7,5}}c_{{22,1}}c_{{28,1}}c_{{31,1}}+2\,c_{{7,3}}c_{{7,6}}{c_{{28,1}}}^{2}c_{{31,1}}+2\,c_{{7,3}}c_{{7,7}}c_{{28,1}}{c_{{31,1}}}^{2} +2\,c_{{7,4}}c_{{7,5}}c_{{22,1}}{c_{{31,1}}}^{2}+2\,c_{{7,4}}c_{{7,6}}c_{{28,1}}{c_{{31,1}}}^{2}+2\,c_{{7,4}}c_{{7,7}}{c_{{31,1}}}^{3}+{c_{{7,1}}}^{2}{c_{{32,1}}}^{2}+2\,c_{{7,1}}c_{{7,5}}c_{{22,1}}c_{{32,1}} +2\,c_{{7,1}}c_{{7,6}}c_{{28,1}}c_{{32,1}}+2\,c_{{7,1}}c_{{7,7}}c_{{31,1}}c_{{32,1}}+2\,c_{{7,2}}c_{{7,8}}{c_{{28,1}}}^{2}+2\,c_{{7,3}}c_{{7,8}}c_{{28,1}}c_{{31,1}} +2\,c_{{7,4}}c_{{7,8}}{c_{{31,1}}}^{2}+ {c_{{7,5}}}^{2}{c_{{22,1}}}^{2}+ 2\,c_{{7,5}}c_{{7,6}}c_{{22,1}}c_{{28,1}}+2\,c_{{7,5}}c_{{7,7}}c_{{22,1}}c_{{31,1}}+{c_{{7,6}}}^{2}{c_{{28,1}}}^{2}+2\,c_{{7,6}}c_{{7,7}}c_{{28,1}}c_{{31,1}}+ {c_{{7,7}}}^{2}{c_{{31,1}}}^{2}-c_{{1,2}}{c_{{28,1}}}^{2}-c_{{1,3}}c_{{28,1}}c_{{31,1}}-c_{{1,4}}{c_{{31,1}}}^{2}+2\,c_{{7,1}}c_{{7,8}}c_{{32,1}}+2\,c_{{7,5}}c_{{7,8}}c_{{22,1}}+ 2\,c_{{7,6}}c_{{7,8}}c_{{28,1}}+2\,c_{{7,7}}c_{{7,8}}c_{{31,1}}-c_{{1,1}}c_{{32,1}}-c_{{1,5}}c_{{22,1}}-c_{{1,6}}c_{{28,1}}-c_{{1,7}}c_{{31,1}}+{c_{{7,8}}}^{2})$,\\
$u_{2,8}=\mathbf{c_{{2,8}}}- ( c_{{7,5}}c_{{13,5}}{c_{{22,1}}}^{2}-c_{{2,3}}c_{{28
,1}}c_{{31,1}}+c_{{7,2}}c_{{13,6}}{c_{{28,1}}}^{3}+c_{{7,6}}c_{{13,6}}
{c_{{28,1}}}^{2}+c_{{13,2}}{c_{{28,1}}}^{4}c_{{7,2}}+ c_{{13,2}}{c_{{28
,1}}}^{3}c_{{7,6}}+c_{{13,2}}{c_{{28,1}}}^{2}c_{{7,8}}+c_{{7,4}}c_{{13
,7}}{c_{{31,1}}}^{3}+  c_{{7,7}}c_{{13,7}}{c_{{31,1}}}^{2}+  c_{{13,4}}{c_
{{31,1}}}^{4}c_{{7,4}}+ c_{{13,4}}{c_{{31,1}}}^{3}c_{{7,7}} +c_{{13,4}}{
c_{{31,1}}}^{2}c_{{7,8}}+c_{{7,4}}c_{{13,8}}{c_{{31,1}}}^{2}+c_{{7,2}}
c_{{13,8}}{c_{{28,1}}}^{2}-c_{{2,4}}{c_{{31,1}}}^{2}-c_{{2,2}}{c_{{28,
1}}}^{2}+c_{{32,1}}c_{{7,1}}c_{{13,3}}c_{{28,1}}c_{{31,1}}+c_{{7,3}}c_
{{13,5}}c_{{22,1}}c_{{28,1}}c_{{31,1}}+c_{{7,5}}c_{{13,3}}c_{{22,1}}c_
{{28,1}}c_{{31,1}}+c_{{7,1}}c_{{13,1}}{c_{{32,1}}}^{2}+c_{{7,3}}c_{{13
,8}}c_{{28,1}}c_{{31,1}}+c_{{7,3}}c_{{13,7}}c_{{28,1}}{c_{{31,1}}}^{2}
+c_{{7,5}}c_{{13,7}}c_{{22,1}}c_{{31,1}}+c_{{7,6}}c_{{13,7}}c_{{28,1}}
c_{{31,1}}+c_{{13,4}}{c_{{31,1}}}^{2}c_{{7,2}}{c_{{28,1}}}^{2}+ c_{{13,
4}}{c_{{31,1}}}^{3}c_{{7,3}}c_{{28,1}}+c_{{13,4}}{c_{{31,1}}}^{2}c_{{7
,5}}c_{{22,1}}+c_{{13,4}}{c_{{31,1}}}^{2}c_{{7,6}}c_{{28,1}}+c_{{7,5}}
c_{{13,6}}c_{{22,1}}c_{{28,1}} +c_{{7,7}}c_{{13,6}}c_{{28,1}}c_{{31,1}}
+c_{{7,2}}c_{{13,3}}{c_{{28,1}}}^{3}c_{{31,1}}+ c_{{7,3}}c_{{13,3}}{c_{
{28,1}}}^{2}{c_{{31,1}}}^{2}+c_{{7,4}}c_{{13,3}}c_{{28,1}}{c_{{31,1}}}
^{3}+ c_{{7,6}}c_{{13,3}}{c_{{28,1}}}^{2}c_{{31,1}}+c_{{7,7}}c_{{13,3}}
c_{{28,1}}{c_{{31,1}}}^{2}+c_{{7,8}}c_{{13,3}}c_{{28,1}}c_{{31,1}}+c_{
{13,2}}{c_{{28,1}}}^{3}c_{{7,3}}c_{{31,1}}+ c_{{13,2}}{c_{{28,1}}}^{2}c
_{{7,4}}{c_{{31,1}}}^{2}+c_{{13,2}}{c_{{28,1}}}^{2}c_{{7,5}}c_{{22,1}}
+c_{{13,2}}{c_{{28,1}}}^{2}c_{{7,7}}c_{{31,1}}+ c_{{7,2}}c_{{13,7}}{c_{
{28,1}}}^{2}c_{{31,1}}+c_{{7,4}}c_{{13,5}}c_{{22,1}}{c_{{31,1}}}^{2}+c
_{{7,6}}c_{{13,5}}c_{{22,1}}c_{{28,1}}+c_{{7,7}}c_{{13,5}}c_{{22,1}}c_
{{31,1}}+c_{{7,3}}c_{{13,6}}{c_{{28,1}}}^{2}c_{{31,1}}+ c_{{7,4}}c_{{13
,6}}c_{{28,1}}{c_{{31,1}}}^{2}+c_{{7,2}}c_{{13,5}}c_{{22,1}}{c_{{28,1}
}}^{2}-c_{{2,1}}c_{{32,1}}-c_{{2,5}}c_{{22,1}}-c_{{2,6}}c_{{28,1}}-c_{
{2,7}}c_{{31,1}}+c_{{32,1}}c_{{7,3}}c_{{13,1}}c_{{28,1}}c_{{31,1}}+ c_{
{32,1}}c_{{7,4}}c_{{13,1}}{c_{{31,1}}}^{2}+c_{{13,4}}{c_{{31,1}}}^{2}c
_{{7,1}}c_{{32,1}}+c_{{13,2}}{c_{{28,1}}}^{2}c_{{7,1}}c_{{32,1}}+c_{{
32,1}}c_{{7,2}}c_{{13,1}}{c_{{28,1}}}^{2}+c_{{7,8}}c_{{13,7}}c_{{31,1}
}+c_{{7,1}}c_{{13,8}}c_{{32,1}}+c_{{7,8}}c_{{13,1}}c_{{32,1}}+c_{{7,8}
}c_{{13,5}}c_{{22,1}}+c_{{7,8}}c_{{13,6}}c_{{28,1}}+c_{{7,5}}c_{{13,1}
}c_{{22,1}}c_{{32,1}}+c_{{7,6}}c_{{13,1}}c_{{28,1}}c_{{32,1}}+c_{{7,7}
}c_{{13,1}}c_{{31,1}}c_{{32,1}}+c_{{7,1}}c_{{13,5}}c_{{22,1}}c_{{32,1}
}+c_{{7,1}}c_{{13,6}}c_{{28,1}}c_{{32,1}}+c_{{7,1}}c_{{13,7}}c_{{31,1}
}c_{{32,1}}+c_{{7,8}}c_{{13,8}}+c_{{7,5}}c_{{13,8}}c_{{22,1}}+c_{{7,6}
}c_{{13,8}}c_{{28,1}}+c_{{7,7}}c_{{13,8}}c_{{31,1}} )$,\\
$u_{3,8}=\mathbf{c_{{3,8}}}- ( c_{{7,5}}c_{{18,5}}{c_{{22,1}}}^{2}-c_{{3,3}}c_{{28
,1}}c_{{31,1}}+c_{{7,2}}c_{{18,6}}{c_{{28,1}}}^{3}+c_{{7,6}}c_{{18,6}}
{c_{{28,1}}}^{2}+c_{{18,2}}{c_{{28,1}}}^{4}c_{{7,2}}+c_{{18,2}}{c_{{28
,1}}}^{3}c_{{7,6}}+c_{{18,2}}{c_{{28,1}}}^{2}c_{{7,8}}+c_{{7,4}}c_{{18
,7}}{c_{{31,1}}}^{3}+c_{{7,7}}c_{{18,7}}{c_{{31,1}}}^{2}+c_{{18,4}}{c_
{{31,1}}}^{4}c_{{7,4}}+c_{{18,4}}{c_{{31,1}}}^{3}c_{{7,7}}+c_{{18,4}}{
c_{{31,1}}}^{2}c_{{7,8}}+c_{{7,4}}c_{{18,8}}{c_{{31,1}}}^{2}-c_{{3,4}}
{c_{{31,1}}}^{2}-c_{{3,2}}{c_{{28,1}}}^{2}+c_{{7,3}}c_{{18,5}}c_{{22,1
}}c_{{28,1}}c_{{31,1}}+c_{{7,5}}c_{{18,3}}c_{{22,1}}c_{{28,1}}c_{{31,1
}}+c_{{7,2}}c_{{18,8}}{c_{{28,1}}}^{2}+c_{{7,1}}c_{{18,8}}c_{{32,1}}+c
_{{7,8}}c_{{18,1}}c_{{32,1}}+c_{{7,8}}c_{{18,5}}c_{{22,1}}+c_{{7,8}}c_
{{18,6}}c_{{28,1}}+c_{{7,8}}c_{{18,7}}c_{{31,1}}+c_{{7,1}}c_{{18,1}}{c
_{{32,1}}}^{2}-c_{{3,1}}c_{{32,1}}-c_{{3,5}}c_{{22,1}}-c_{{3,6}}c_{{28
,1}}-c_{{3,7}}c_{{31,1}}+c_{{7,3}}c_{{18,8}}c_{{28,1}}c_{{31,1}}+c_{{
18,2}}{c_{{28,1}}}^{2}c_{{7,4}}{c_{{31,1}}}^{2}+c_{{18,2}}{c_{{28,1}}}
^{2}c_{{7,5}}c_{{22,1}}+c_{{18,2}}{c_{{28,1}}}^{2}c_{{7,7}}c_{{31,1}}+
c_{{7,2}}c_{{18,7}}{c_{{28,1}}}^{2}c_{{31,1}}+c_{{7,3}}c_{{18,7}}c_{{
28,1}}{c_{{31,1}}}^{2}+c_{{7,5}}c_{{18,7}}c_{{22,1}}c_{{31,1}}+c_{{7,6
}}c_{{18,7}}c_{{28,1}}c_{{31,1}}+c_{{18,4}}{c_{{31,1}}}^{2}c_{{7,2}}{c
_{{28,1}}}^{2}+c_{{18,4}}{c_{{31,1}}}^{3}c_{{7,3}}c_{{28,1}}+c_{{18,4}
}{c_{{31,1}}}^{2}c_{{7,5}}c_{{22,1}}+c_{{18,4}}{c_{{31,1}}}^{2}c_{{7,6
}}c_{{28,1}}+c_{{7,6}}c_{{18,5}}c_{{22,1}}c_{{28,1}}+c_{{7,7}}c_{{18,5
}}c_{{22,1}}c_{{31,1}}+c_{{7,3}}c_{{18,6}}{c_{{28,1}}}^{2}c_{{31,1}}+c
_{{7,4}}c_{{18,6}}c_{{28,1}}{c_{{31,1}}}^{2}+c_{{7,5}}c_{{18,6}}c_{{22
,1}}c_{{28,1}}+c_{{7,7}}c_{{18,6}}c_{{28,1}}c_{{31,1}}+c_{{7,2}}c_{{18
,3}}{c_{{28,1}}}^{3}c_{{31,1}}+c_{{7,3}}c_{{18,3}}{c_{{28,1}}}^{2}{c_{
{31,1}}}^{2}+c_{{7,4}}c_{{18,3}}c_{{28,1}}{c_{{31,1}}}^{3}+c_{{7,6}}c_
{{18,3}}{c_{{28,1}}}^{2}c_{{31,1}}+c_{{7,7}}c_{{18,3}}c_{{28,1}}{c_{{
31,1}}}^{2}+c_{{7,8}}c_{{18,3}}c_{{28,1}}c_{{31,1}}+c_{{18,2}}{c_{{28,
1}}}^{3}c_{{7,3}}c_{{31,1}}+c_{{7,4}}c_{{18,5}}c_{{22,1}}{c_{{31,1}}}^
{2}+c_{{7,2}}c_{{18,5}}c_{{22,1}}{c_{{28,1}}}^{2}+c_{{7,1}}c_{{18,5}}c
_{{22,1}}c_{{32,1}}+c_{{7,1}}c_{{18,6}}c_{{28,1}}c_{{32,1}}+c_{{7,1}}c
_{{18,7}}c_{{31,1}}c_{{32,1}}+c_{{7,8}}c_{{18,8}}+c_{{7,5}}c_{{18,1}}c
_{{22,1}}c_{{32,1}}+c_{{7,6}}c_{{18,1}}c_{{28,1}}c_{{32,1}}+c_{{7,7}}c
_{{18,1}}c_{{31,1}}c_{{32,1}}+c_{{32,1}}c_{{7,1}}c_{{18,3}}c_{{28,1}}c
_{{31,1}}+c_{{32,1}}c_{{7,3}}c_{{18,1}}c_{{28,1}}c_{{31,1}}+c_{{32,1}}
c_{{7,4}}c_{{18,1}}{c_{{31,1}}}^{2}+c_{{18,2}}{c_{{28,1}}}^{2}c_{{7,1}
}c_{{32,1}}+c_{{18,4}}{c_{{31,1}}}^{2}c_{{7,1}}c_{{32,1}}+c_{{32,1}}c_
{{7,2}}c_{{18,1}}{c_{{28,1}}}^{2}+c_{{7,5}}c_{{18,8}}c_{{22,1}}+c_{{7,
6}}c_{{18,8}}c_{{28,1}}+c_{{7,7}}c_{{18,8}}c_{{31,1}} )$,\\
$u_{4,8}=
\mathbf{c_{{4,8}}}- ( c_{{7,2}}c_{{22,1}}{c_{{28,1}}}^{2}+c_{{7,3}}c_{{22
,1}}c_{{28,1}}c_{{31,1}}+c_{{7,4}}c_{{22,1}}{c_{{31,1}}}^{2}-c_{{4,2}}
{c_{{28,1}}}^{2}-c_{{4,3}}c_{{28,1}}c_{{31,1}}-c_{{4,4}}{c_{{31,1}}}^{
2}+c_{{7,1}}c_{{22,1}}c_{{32,1}}+c_{{7,5}}{c_{{22,1}}}^{2}+c_{{7,6}}c_
{{22,1}}c_{{28,1}}+c_{{7,7}}c_{{22,1}}c_{{31,1}}-c_{{4,1}}c_{{32,1}}-c
_{{4,5}}c_{{22,1}}-c_{{4,6}}c_{{28,1}}-c_{{4,7}}c_{{31,1}}+c_{{7,8}}c_
{{22,1}} )$,\\
$u_{5,8}=
\mathbf{c_{{5,8}}}- (c_{{7,2}}{c_{{28,1}}}^{3}+c_{{7,3}}{c_{{28,1}}}^{2}
c_{{31,1}}+c_{{7,4}}c_{{28,1}}{c_{{31,1}}}^{2}-c_{{5,2}}{c_{{28,1}}}^{
2}-c_{{5,3}}c_{{28,1}}c_{{31,1}}-c_{{5,4}}{c_{{31,1}}}^{2}+c_{{7,1}}c_
{{28,1}}c_{{32,1}}+c_{{7,5}}c_{{22,1}}c_{{28,1}}+c_{{7,6}}{c_{{28,1}}}
^{2}+c_{{7,7}}c_{{28,1}}c_{{31,1}}-c_{{5,1}}c_{{32,1}}-c_{{5,5}}c_{{22
,1}}-c_{{5,6}}c_{{28,1}}-c_{{5,7}}c_{{31,1}}+c_{{7,8}}c_{{28,1}}
)$,\\
$u_{6,8}=
\mathbf{c_{{6,8}}}- (c_{{7,2}}{c_{{28,1}}}^{2}c_{{31,1}}+c_{{7,3}}c_{{28
,1}}{c_{{31,1}}}^{2}+c_{{7,4}}{c_{{31,1}}}^{3}-c_{{6,2}}{c_{{28,1}}}^{
2}-c_{{6,3}}c_{{28,1}}c_{{31,1}}-c_{{6,4}}{c_{{31,1}}}^{2}+c_{{7,1}}c_
{{31,1}}c_{{32,1}}+c_{{7,5}}c_{{22,1}}c_{{31,1}}+c_{{7,6}}c_{{28,1}}c_
{{31,1}}+c_{{7,7}}{c_{{31,1}}}^{2}-c_{{6,1}}c_{{32,1}}-c_{{6,5}}c_{{22
,1}}-c_{{6,6}}c_{{28,1}}-c_{{6,7}}c_{{31,1}}+c_{{7,8}}c_{{31,1}}
)$,\\
$u_{8,8}=
\mathbf{c_{{8,8}}}- ( {c_{{13,2}}}^{2}{c_{{28,1}}}^{4}+2\,c_{{13,2}}c_{{
13,3}}{c_{{28,1}}}^{3}c_{{31,1}}+2\,c_{{13,2}}c_{{13,4}}{c_{{28,1}}}^{
2}{c_{{31,1}}}^{2}+{c_{{13,3}}}^{2}{c_{{28,1}}}^{2}{c_{{31,1}}}^{2}+2
\,c_{{13,3}}c_{{13,4}}c_{{28,1}}{c_{{31,1}}}^{3}+{c_{{13,4}}}^{2}{c_{{
31,1}}}^{4}+2\,c_{{13,1}}c_{{13,2}}{c_{{28,1}}}^{2}c_{{32,1}}+2\,c_{{
13,1}}c_{{13,3}}c_{{28,1}}c_{{31,1}}c_{{32,1}}+2\,c_{{13,1}}c_{{13,4}}
{c_{{31,1}}}^{2}c_{{32,1}}+2\,c_{{13,2}}c_{{13,5}}c_{{22,1}}{c_{{28,1}
}}^{2}+2\,c_{{13,2}}c_{{13,6}}{c_{{28,1}}}^{3}+2\,c_{{13,2}}c_{{13,7}}
{c_{{28,1}}}^{2}c_{{31,1}}+2\,c_{{13,3}}c_{{13,5}}c_{{22,1}}c_{{28,1}}
c_{{31,1}}+2\,c_{{13,3}}c_{{13,6}}{c_{{28,1}}}^{2}c_{{31,1}}+2\,c_{{13
,3}}c_{{13,7}}c_{{28,1}}{c_{{31,1}}}^{2}+2\,c_{{13,4}}c_{{13,5}}c_{{22
,1}}{c_{{31,1}}}^{2}+2\,c_{{13,4}}c_{{13,6}}c_{{28,1}}{c_{{31,1}}}^{2}
+2\,c_{{13,4}}c_{{13,7}}{c_{{31,1}}}^{3}+{c_{{13,1}}}^{2}{c_{{32,1}}}^
{2}+2\,c_{{13,1}}c_{{13,5}}c_{{22,1}}c_{{32,1}}+2\,c_{{13,1}}c_{{13,6}
}c_{{28,1}}c_{{32,1}}+2\,c_{{13,1}}c_{{13,7}}c_{{31,1}}c_{{32,1}}+2\,c
_{{13,2}}c_{{13,8}}{c_{{28,1}}}^{2}+2\,c_{{13,3}}c_{{13,8}}c_{{28,1}}c
_{{31,1}}+2\,c_{{13,4}}c_{{13,8}}{c_{{31,1}}}^{2}+{c_{{13,5}}}^{2}{c_{
{22,1}}}^{2}+2\,c_{{13,5}}c_{{13,6}}c_{{22,1}}c_{{28,1}}+2\,c_{{13,5}}
c_{{13,7}}c_{{22,1}}c_{{31,1}}+{c_{{13,6}}}^{2}{c_{{28,1}}}^{2}+2\,c_{
{13,6}}c_{{13,7}}c_{{28,1}}c_{{31,1}}+{c_{{13,7}}}^{2}{c_{{31,1}}}^{2}
-c_{{8,2}}{c_{{28,1}}}^{2}-c_{{8,3}}c_{{28,1}}c_{{31,1}}-c_{{8,4}}{c_{
{31,1}}}^{2}+2\,c_{{13,1}}c_{{13,8}}c_{{32,1}}+2\,c_{{13,5}}c_{{13,8}}
c_{{22,1}}+2\,c_{{13,6}}c_{{13,8}}c_{{28,1}}+2\,c_{{13,7}}c_{{13,8}}c_
{{31,1}}-c_{{8,1}}c_{{32,1}}-c_{{8,5}}c_{{22,1}}-c_{{8,6}}c_{{28,1}}-c
_{{8,7}}c_{{31,1}}+{c_{{13,8}}}^{2} )$,\\
$u_{9,8}=
\mathbf{c_{{9,8}}}- ( c_{{13,5}}c_{{18,5}}{c_{{22,1}}}^{2}-c_{{9,3}}c_{{
28,1}}c_{{31,1}}+c_{{13,6}}c_{{18,2}}{c_{{28,1}}}^{3}+c_{{13,6}}c_{{18
,6}}{c_{{28,1}}}^{2}+c_{{13,2}}{c_{{28,1}}}^{4}c_{{18,2}}+c_{{13,2}}{c
_{{28,1}}}^{3}c_{{18,6}}+c_{{13,2}}{c_{{28,1}}}^{2}c_{{18,8}}+c_{{13,8
}}c_{{18,2}}{c_{{28,1}}}^{2}+c_{{13,8}}c_{{18,4}}{c_{{31,1}}}^{2}+c_{{
13,7}}c_{{18,4}}{c_{{31,1}}}^{3}+c_{{13,7}}c_{{18,7}}{c_{{31,1}}}^{2}+
c_{{13,4}}{c_{{31,1}}}^{4}c_{{18,4}}+c_{{13,4}}{c_{{31,1}}}^{3}c_{{18,
7}}+c_{{13,4}}{c_{{31,1}}}^{2}c_{{18,8}}-c_{{9,4}}{c_{{31,1}}}^{2}-c_{
{9,2}}{c_{{28,1}}}^{2}+c_{{13,5}}c_{{18,3}}c_{{22,1}}c_{{28,1}}c_{{31,
1}}+c_{{13,3}}c_{{18,5}}c_{{22,1}}c_{{28,1}}c_{{31,1}}+c_{{13,5}}c_{{
18,2}}c_{{22,1}}{c_{{28,1}}}^{2}+c_{{13,1}}c_{{18,8}}c_{{32,1}}+c_{{13
,8}}c_{{18,1}}c_{{32,1}}+c_{{13,8}}c_{{18,5}}c_{{22,1}}+c_{{13,8}}c_{{
18,6}}c_{{28,1}}+c_{{13,8}}c_{{18,7}}c_{{31,1}}+c_{{13,8}}c_{{18,3}}c_
{{28,1}}c_{{31,1}}+c_{{13,7}}c_{{18,2}}{c_{{28,1}}}^{2}c_{{31,1}}+c_{{
13,7}}c_{{18,3}}c_{{28,1}}{c_{{31,1}}}^{2}+c_{{13,7}}c_{{18,5}}c_{{22,
1}}c_{{31,1}}+c_{{13,7}}c_{{18,6}}c_{{28,1}}c_{{31,1}}+c_{{13,4}}{c_{{
31,1}}}^{2}c_{{18,2}}{c_{{28,1}}}^{2}+c_{{13,4}}{c_{{31,1}}}^{3}c_{{18
,3}}c_{{28,1}}+c_{{13,4}}{c_{{31,1}}}^{2}c_{{18,5}}c_{{22,1}}+c_{{13,4
}}{c_{{31,1}}}^{2}c_{{18,6}}c_{{28,1}}+c_{{13,6}}c_{{18,3}}{c_{{28,1}}
}^{2}c_{{31,1}}+c_{{13,6}}c_{{18,4}}c_{{28,1}}{c_{{31,1}}}^{2}+c_{{13,
6}}c_{{18,5}}c_{{22,1}}c_{{28,1}}+c_{{13,6}}c_{{18,7}}c_{{28,1}}c_{{31
,1}}+c_{{13,3}}c_{{18,2}}{c_{{28,1}}}^{3}c_{{31,1}}+c_{{13,3}}c_{{18,3
}}{c_{{28,1}}}^{2}{c_{{31,1}}}^{2}+c_{{13,3}}c_{{18,4}}c_{{28,1}}{c_{{
31,1}}}^{3}+c_{{13,3}}c_{{18,6}}{c_{{28,1}}}^{2}c_{{31,1}}+c_{{13,3}}c
_{{18,7}}c_{{28,1}}{c_{{31,1}}}^{2}+c_{{13,3}}c_{{18,8}}c_{{28,1}}c_{{
31,1}}+c_{{13,2}}{c_{{28,1}}}^{3}c_{{18,3}}c_{{31,1}}+c_{{13,2}}{c_{{
28,1}}}^{2}c_{{18,4}}{c_{{31,1}}}^{2}+c_{{13,2}}{c_{{28,1}}}^{2}c_{{18
,5}}c_{{22,1}}+c_{{13,2}}{c_{{28,1}}}^{2}c_{{18,7}}c_{{31,1}}+c_{{13,5
}}c_{{18,6}}c_{{22,1}}c_{{28,1}}+c_{{13,5}}c_{{18,7}}c_{{22,1}}c_{{31,
1}}+c_{{13,5}}c_{{18,4}}c_{{22,1}}{c_{{31,1}}}^{2}+c_{{13,5}}c_{{18,8}
}c_{{22,1}}+c_{{13,6}}c_{{18,8}}c_{{28,1}}+c_{{13,7}}c_{{18,8}}c_{{31,
1}}+c_{{32,1}}c_{{13,3}}c_{{18,1}}c_{{28,1}}c_{{31,1}}+c_{{32,1}}c_{{
13,1}}c_{{18,3}}c_{{28,1}}c_{{31,1}}+c_{{13,4}}{c_{{31,1}}}^{2}c_{{18,
1}}c_{{32,1}}+c_{{13,2}}{c_{{28,1}}}^{2}c_{{18,1}}c_{{32,1}}+c_{{32,1}
}c_{{13,1}}c_{{18,2}}{c_{{28,1}}}^{2}+c_{{32,1}}c_{{13,1}}c_{{18,4}}{c
_{{31,1}}}^{2}+c_{{13,5}}c_{{18,1}}c_{{22,1}}c_{{32,1}}+c_{{13,6}}c_{{
18,1}}c_{{28,1}}c_{{32,1}}+c_{{13,7}}c_{{18,1}}c_{{31,1}}c_{{32,1}}+c_
{{13,1}}c_{{18,1}}{c_{{32,1}}}^{2}-c_{{9,1}}c_{{32,1}}-c_{{9,5}}c_{{22
,1}}-c_{{9,6}}c_{{28,1}}-c_{{9,7}}c_{{31,1}}+c_{{13,8}}c_{{18,8}}+c_{{
13,1}}c_{{18,5}}c_{{22,1}}c_{{32,1}}+c_{{13,1}}c_{{18,6}}c_{{28,1}}c_{
{32,1}}+c_{{13,1}}c_{{18,7}}c_{{31,1}}c_{{32,1}} )$,\\
$u_{10,8}=
\mathbf{c_{{10,8}}}- ( c_{{13,2}}c_{{22,1}}{c_{{28,1}}}^{2}+c_{{13,3}}c_{
{22,1}}c_{{28,1}}c_{{31,1}}+c_{{13,4}}c_{{22,1}}{c_{{31,1}}}^{2}-c_{{
10,2}}{c_{{28,1}}}^{2}-c_{{10,3}}c_{{28,1}}c_{{31,1}}-c_{{10,4}}{c_{{
31,1}}}^{2}+c_{{13,1}}c_{{22,1}}c_{{32,1}}+c_{{13,5}}{c_{{22,1}}}^{2}+
c_{{13,6}}c_{{22,1}}c_{{28,1}}+c_{{13,7}}c_{{22,1}}c_{{31,1}}-c_{{10,1
}}c_{{32,1}}-c_{{10,5}}c_{{22,1}}-c_{{10,6}}c_{{28,1}}-c_{{10,7}}c_{{
31,1}}+c_{{13,8}}c_{{22,1}} )$,\\
$u_{11,8}=
\mathbf{c_{{11,8}}}- ( c_{{13,2}}{c_{{28,1}}}^{3}+c_{{13,3}}{c_{{28,1}}}^
{2}c_{{31,1}}+c_{{13,4}}c_{{28,1}}{c_{{31,1}}}^{2}-c_{{11,2}}{c_{{28,1
}}}^{2}-c_{{11,3}}c_{{28,1}}c_{{31,1}}-c_{{11,4}}{c_{{31,1}}}^{2}+c_{{
13,1}}c_{{28,1}}c_{{32,1}}+c_{{13,5}}c_{{22,1}}c_{{28,1}}+c_{{13,6}}{c
_{{28,1}}}^{2}+c_{{13,7}}c_{{28,1}}c_{{31,1}}-c_{{11,1}}c_{{32,1}}-c_{
{11,5}}c_{{22,1}}-c_{{11,6}}c_{{28,1}}-c_{{11,7}}c_{{31,1}}+c_{{13,8}}
c_{{28,1}} )$,\\
$u_{12,8}=
\mathbf{c_{{12,8}}}- ( c_{{13,2}}{c_{{28,1}}}^{2}c_{{31,1}}+c_{{13,3}}c_{
{28,1}}{c_{{31,1}}}^{2}+c_{{13,4}}{c_{{31,1}}}^{3}-c_{{12,2}}{c_{{28,1
}}}^{2}-c_{{12,3}}c_{{28,1}}c_{{31,1}}-c_{{12,4}}{c_{{31,1}}}^{2}+c_{{
13,1}}c_{{31,1}}c_{{32,1}}+c_{{13,5}}c_{{22,1}}c_{{31,1}}+c_{{13,6}}c_
{{28,1}}c_{{31,1}}+c_{{13,7}}{c_{{31,1}}}^{2}-c_{{12,1}}c_{{32,1}}-c_{
{12,5}}c_{{22,1}}-c_{{12,6}}c_{{28,1}}-c_{{12,7}}c_{{31,1}}+c_{{13,8}}
c_{{31,1}} )$,\\
$u_{14,8}=
\mathbf{c_{{14,8}}}- ( {c_{{18,2}}}^{2}{c_{{28,1}}}^{4}+2\,c_{{18,2}}c_{{
18,3}}{c_{{28,1}}}^{3}c_{{31,1}}+2\,c_{{18,2}}c_{{18,4}}{c_{{28,1}}}^{
2}{c_{{31,1}}}^{2}+{c_{{18,3}}}^{2}{c_{{28,1}}}^{2}{c_{{31,1}}}^{2}+2
\,c_{{18,3}}c_{{18,4}}c_{{28,1}}{c_{{31,1}}}^{3}+{c_{{18,4}}}^{2}{c_{{
31,1}}}^{4}+2\,c_{{18,1}}c_{{18,2}}{c_{{28,1}}}^{2}c_{{32,1}}+2\,c_{{
18,1}}c_{{18,3}}c_{{28,1}}c_{{31,1}}c_{{32,1}}+2\,c_{{18,1}}c_{{18,4}}
{c_{{31,1}}}^{2}c_{{32,1}}+2\,c_{{18,2}}c_{{18,5}}c_{{22,1}}{c_{{28,1}
}}^{2}+2\,c_{{18,2}}c_{{18,6}}{c_{{28,1}}}^{3}+2\,c_{{18,2}}c_{{18,7}}
{c_{{28,1}}}^{2}c_{{31,1}}+2\,c_{{18,3}}c_{{18,5}}c_{{22,1}}c_{{28,1}}
c_{{31,1}}+2\,c_{{18,3}}c_{{18,6}}{c_{{28,1}}}^{2}c_{{31,1}}+2\,c_{{18
,3}}c_{{18,7}}c_{{28,1}}{c_{{31,1}}}^{2}+2\,c_{{18,4}}c_{{18,5}}c_{{22
,1}}{c_{{31,1}}}^{2}+2\,c_{{18,4}}c_{{18,6}}c_{{28,1}}{c_{{31,1}}}^{2}
+2\,c_{{18,4}}c_{{18,7}}{c_{{31,1}}}^{3}+{c_{{18,1}}}^{2}{c_{{32,1}}}^
{2}+2\,c_{{18,1}}c_{{18,5}}c_{{22,1}}c_{{32,1}}+2\,c_{{18,1}}c_{{18,6}
}c_{{28,1}}c_{{32,1}}+2\,c_{{18,1}}c_{{18,7}}c_{{31,1}}c_{{32,1}}+2\,c
_{{18,2}}c_{{18,8}}{c_{{28,1}}}^{2}+2\,c_{{18,3}}c_{{18,8}}c_{{28,1}}c
_{{31,1}}+2\,c_{{18,4}}c_{{18,8}}{c_{{31,1}}}^{2}+{c_{{18,5}}}^{2}{c_{
{22,1}}}^{2}+2\,c_{{18,5}}c_{{18,6}}c_{{22,1}}c_{{28,1}}+2\,c_{{18,5}}
c_{{18,7}}c_{{22,1}}c_{{31,1}}+{c_{{18,6}}}^{2}{c_{{28,1}}}^{2}+2\,c_{
{18,6}}c_{{18,7}}c_{{28,1}}c_{{31,1}}+{c_{{18,7}}}^{2}{c_{{31,1}}}^{2}
-c_{{14,2}}{c_{{28,1}}}^{2}-c_{{14,3}}c_{{28,1}}c_{{31,1}}-c_{{14,4}}{
c_{{31,1}}}^{2}+2\,c_{{18,1}}c_{{18,8}}c_{{32,1}}+2\,c_{{18,5}}c_{{18,
8}}c_{{22,1}}+2\,c_{{18,6}}c_{{18,8}}c_{{28,1}}+2\,c_{{18,7}}c_{{18,8}
}c_{{31,1}}-c_{{14,1}}c_{{32,1}}-c_{{14,5}}c_{{22,1}}-c_{{14,6}}c_{{28
,1}}-c_{{14,7}}c_{{31,1}}+{c_{{18,8}}}^{2} )$,\\
$u_{15,8}=
\mathbf{c_{{15,8}}}- ( c_{{18,2}}c_{{22,1}}{c_{{28,1}}}^{2}+c_{{18,3}}c_{
{22,1}}c_{{28,1}}c_{{31,1}}+c_{{18,4}}c_{{22,1}}{c_{{31,1}}}^{2}-c_{{
15,2}}{c_{{28,1}}}^{2}-c_{{15,3}}c_{{28,1}}c_{{31,1}}-c_{{15,4}}{c_{{
31,1}}}^{2}+c_{{18,1}}c_{{22,1}}c_{{32,1}}+c_{{18,5}}{c_{{22,1}}}^{2}+
c_{{18,6}}c_{{22,1}}c_{{28,1}}+c_{{18,7}}c_{{22,1}}c_{{31,1}}-c_{{15,1
}}c_{{32,1}}-c_{{15,5}}c_{{22,1}}-c_{{15,6}}c_{{28,1}}-c_{{15,7}}c_{{
31,1}}+c_{{18,8}}c_{{22,1}} )$,\\
$u_{16,8}=
\mathbf{c_{{16,8}}}- ( c_{{18,2}}{c_{{28,1}}}^{3}+c_{{18,3}}{c_{{28,1}}}^
{2}c_{{31,1}}+c_{{18,4}}c_{{28,1}}{c_{{31,1}}}^{2}-c_{{16,2}}{c_{{28,1
}}}^{2}-c_{{16,3}}c_{{28,1}}c_{{31,1}}-c_{{16,4}}{c_{{31,1}}}^{2}+c_{{
18,1}}c_{{28,1}}c_{{32,1}}+c_{{18,5}}c_{{22,1}}c_{{28,1}}+c_{{18,6}}{c
_{{28,1}}}^{2}+c_{{18,7}}c_{{28,1}}c_{{31,1}}-c_{{16,1}}c_{{32,1}}-c_{
{16,5}}c_{{22,1}}-c_{{16,6}}c_{{28,1}}-c_{{16,7}}c_{{31,1}}+c_{{18,8}}
c_{{28,1}} )$,\\
$u_{17,8}=
\mathbf{c_{{17,8}}}- (c_{{18,2}}{c_{{28,1}}}^{2}c_{{31,1}}+c_{{18,3}}c_{
{28,1}}{c_{{31,1}}}^{2}+c_{{18,4}}{c_{{31,1}}}^{3}-c_{{17,2}}{c_{{28,1
}}}^{2}-c_{{17,3}}c_{{28,1}}c_{{31,1}}-c_{{17,4}}{c_{{31,1}}}^{2}+c_{{
18,1}}c_{{31,1}}c_{{32,1}}+c_{{18,5}}c_{{22,1}}c_{{31,1}}+c_{{18,6}}c_
{{28,1}}c_{{31,1}}+c_{{18,7}}{c_{{31,1}}}^{2}-c_{{17,1}}c_{{32,1}}-c_{
{17,5}}c_{{22,1}}-c_{{17,6}}c_{{28,1}}-c_{{17,7}}c_{{31,1}}+c_{{18,8}}
c_{{31,1}} )$,\\
$u_{19,8}=
\mathbf{c_{{19,8}}}- ( -c_{{19,2}}{c_{{28,1}}}^{2}-c_{{19,3}}c_{{28,1}}c_
{{31,1}}-c_{{19,4}}{c_{{31,1}}}^{2}-c_{{19,1}}c_{{32,1}}-c_{{19,5}}c_{
{22,1}}-c_{{19,6}}c_{{28,1}}-c_{{19,7}}c_{{31,1}}+{c_{{22,1}}}^{2}
)$,\\
$u_{20,8}=
\mathbf{c_{{20,8}}}- ( -c_{{20,2}}{c_{{28,1}}}^{2}-c_{{20,3}}c_{{28,1}}c_
{{31,1}}-c_{{20,4}}{c_{{31,1}}}^{2}-c_{{20,1}}c_{{32,1}}-c_{{20,5}}c_{
{22,1}}-c_{{20,6}}c_{{28,1}}-c_{{20,7}}c_{{31,1}}+c_{{22,1}}c_{{28,1}}
)$,\\
$u_{21,8}=
\mathbf{c_{{21,8}}}- (-c_{{21,2}}{c_{{28,1}}}^{2}-c_{{21,3}}c_{{28,1}}c_
{{31,1}}-c_{{21,4}}{c_{{31,1}}}^{2}-c_{{21,1}}c_{{32,1}}-c_{{21,5}}c_{
{22,1}}-c_{{21,6}}c_{{28,1}}-c_{{21,7}}c_{{31,1}}+c_{{22,1}}c_{{31,1}}
)$,\\
$u_{23,1}=
\mathbf{c_{{23,1}}}- ({c_{{28,1}}}^{3} )$,\\
$u_{24,1}=
\mathbf{c_{{24,1}}}- ( {c_{{28,1}}}^{2}c_{{31,1}} )$,\\
$u_{25,1}=
\mathbf{c_{{25,1}}}- ( {c_{{28,1}}}^{2} )$,\\
$u_{26,1}=
\mathbf{c_{{26,1}}}- ( c_{{28,1}}{c_{{31,1}}}^{2} )$,\\
$u_{27,1}=
\mathbf{c_{{27,1}}}- ( c_{{28,1}}c_{{31,1}})$,\\
$u_{29,1}=
\mathbf{c_{{29,1}}}- ( {c_{{31,1}}}^{3} )$,\\
$u_{30,1}=
\mathbf{c_{{30,1}}}-({c_{{31,1}}}^{2} )$.
\end{small}

\vskip 5mm
Here are the values we choose for the $154$ parameters in $C_0$ and the consequent values for the $25$ variables that are eliminable variables due to the shape of the polynomials $u_{i,j}$, in order to obtain the generators 
 of a particular ideal $\id a$ in $\Mf(\id j_G,3)$: 
\vskip 2mm
values for the parameters in $C_0$

\begin{small}
\noindent $c_{{1,1}}=-3$, $c_{{1,2}}=2$, $c_{{1,3}}=-3$, $c_{{1,4}}=0$, $c_{{1,5}}=1
$, $c_{{1,6}}=-2$, $c_{{1,7}}=-3$, $c_{{2,1}}=-1$, $c_{{2,2}}=-1$, $c_{{2,3}}=-1$, $c_{{
2,4}}=-2$, $c_{{2,5}}=-1$, $c_{{2,6}}=-3$, $c_{{2,7}}=-3$, $c_{{3,1}}=0$, $c_{{3,2}}=
-2$, $c_{{3,3}}=-1$, $c_{{3,4}}=-2$, $c_{{3,5}}=-1$, $c_{{3,6}}=-2$, $c_{{3,7}}=1$, $c_{
{4,1}}=1$, $c_{{4,2}}=-1$, $c_{{4,3}}=-2$, $c_{{4,4}}=1$, $c_{{4,5}}=-1$, $c_{{4,6}}=
-1$, $c_{{4,7}}=-3$, $c_{{5,1}}=-2$, $c_{{5,2}}=2$, $c_{{5,3}}=2$, $c_{{5,4}}=-2$, $c_{{
5,5}}=0$, $c_{{5,6}}=1$, $c_{{5,7}}=-1$, $c_{{6,1}}=-1$, $c_{{6,2}}=-3$, $c_{{6,3}}=-
2$, $c_{{6,4}}=2$, $c_{{6,5}}=-2$, $c_{{6,6}}=-3$, $c_{{6,7}}=-2$, $c_{{7,1}}=-2$, $c_{{
7,2}}=-1$, $c_{{7,3}}=0$, $c_{{7,4}}=0$, $c_{{7,5}}=-1$, $c_{{7,6}}=1$, $c_{{7,7}}=-2
$, $c_{{7,8}}=1$, $c_{{8,1}}=1$, $c_{{8,2}}=2$, $c_{{8,3}}=0$, $c_{{8,4}}=0$, $c_{{8,5}}
=-3$, $c_{{8,6}}=-1$, $c_{{8,7}}=-2$, $c_{{9,1}}=1$, $c_{{9,2}}=-3$, $c_{{9,3}}=0$, $c_{
{9,4}}=0$, $c_{{9,5}}=0$, $c_{{9,6}}=-2$, $c_{{9,7}}=0$, $c_{{10,1}}=0$, $c_{{10,2}}=
-2$, $c_{{10,3}}=-2$, $c_{{10,4}}=-1$, $c_{{10,5}}=-3$, $c_{{10,6}}=-3$, $c_{{10,7}}=
2$, $c_{{11,1}}=2$, $c_{{11,2}}=-3$, $c_{{11,3}}=0$, $c_{{11,4}}=0$, $c_{{11,5}}=-1$, $c
_{{11,6}}=-2$, $c_{{11,7}}=1$, $c_{{12,1}}=-1$, $c_{{12,2}}=2$, $c_{{12,3}}=-3$, $c_{
{12,4}}=-2$, $c_{{12,5}}=2$, $c_{{12,6}}=1$, $c_{{12,7}}=-2$, $c_{{13,1}}=1$, $c_{{13
,2}}=0$, $c_{{13,3}}=0$, $c_{{13,4}}=2$, $c_{{13,5}}=0$, $c_{{13,6}}=0$, $c_{{13,7}}=
-1$, $c_{{13,8}}=2$, $c_{{14,1}}=2$, $c_{{14,2}}=-2$, $c_{{14,3}}=-3$, $c_{{14,4}}=1,
c_{{14,5}}=0$, $c_{{14,6}}=-1$, $c_{{14,7}}=-2$, $c_{{15,1}}=1$, $c_{{15,2}}=-1$, $c_
{{15,3}}=0$, $c_{{15,4}}=-1$, $c_{{15,5}}=-3$, $c_{{15,6}}=-2$, $c_{{15,7}}=-2$, $c_{
{16,1}}=-2$, $c_{{16,2}}=2$, $c_{{16,3}}=-1$, $c_{{16,4}}=-2$, $c_{{16,5}}=-3$, $c_{{
16,6}}=-3$, $c_{{16,7}}=-3$, $c_{{17,1}}=-1$, $c_{{17,2}}=-2$, $c_{{17,3}}=-2$, $c_{{
17,4}}=-3$, $c_{{17,5}}=-3$, $c_{{17,6}}=-2$, $c_{{17,7}}=-1$, $c_{{18,1}}=-3$, $c_{{
18,2}}=-3$, $c_{{18,3}}=2$, $c_{{18,4}}=-3$, $c_{{18,5}}=-2$, $c_{{18,6}}=1$, $c_{{18
,7}}=-2$, $c_{{18,8}}=1$, $c_{{19,1}}=0$, $c_{{19,2}}=-1$, $c_{{19,3}}=-3$, $c_{{19,4
}}=0$, $c_{{19,5}}=0$, $c_{{19,6}}=-3$, $c_{{19,7}}=2$, $c_{{20,1}}=1$, $c_{{20,2}}=0
$, $c_{{20,3}}=-3$, $c_{{20,4}}=-2$, $c_{{20,5}}=2$, $c_{{20,6}}=0$, $c_{{20,7}}=2$, $c_
{{21,1}}=1$, $c_{{21,2}}=0$, $c_{{21,3}}=2$, $c_{{21,4}}=0$, $c_{{21,5}}=0$, $c_{{21,
6}}=0$, $c_{{21,7}}=-1$, $c_{{22,1}}=-3$, $c_{{28,1}}=1$, $c_{{31,1}}=-3$, $c_{{32,1}
}=-1$;
\end{small}
\vskip 2mm
values for the $25$ eliminable variables

\begin{small}
\noindent$c_{{1,8}}=126$, $c_{{2,8}}=270$, $c_{{3,8}}=-209$, $c_{{4,8}}=-60$, $c_{{5,8}}=28$, $c_{{6,8}}=-67$, $c_{{8,8}}=469$, $c_{{9,8}}=-412$, $c_{{10,8}}=-61$, $c_{{11,8}}=29$, $c_{{12,8}}=-61$, $c_{{14,8}}=342$, $c_{{15,8}}=55$, $c_{{16,8}}=-23$,
$c_{{17,8}}=69$, $c_{{19,8}}=10$, $c_{{20,8}}=19$, $c_{{21,8}}=13$, $c_{{23,1}}=1$, $c_{{24,1}}=-3$, $c_{{25,1}}=1$, $c_{{26,1}}=9$, $c_{{27,1}}=-3$, $c_{{29,1}}=-27$, $c_{{30,1}}=9$;
\end{small}
\vskip 2mm
generators of $\id a$

\begin{small}
\noindent$
x_{7}^{2}- \left(  -3\,x_{1}^{3}+126\,x_{1}^{2}-3\,x_{1}x_{2}-2\,x_{1}x_{3}+x_{1}x_{4}-3\,x_{2}x_{3}+2\,x_{3}^{2} \right)$,\\
$
x_{6}x_{7}- \left(  -x_{1}^{3}+270\,x_{1}^{2}-3\,x_{1}x_{2}-3\,x_{1}x_{3}-x_{1}x_{4}-2\,x_{2}^{2}-x_{2}x_{3}-x_{3}^{2} \right)$,\\
$
x_{5}x_{7}- \left(  -209\,x_{1}^{2}+x_{1}x_{2}-2\,x_{1}x_{3}-x_{1}x_{4}-2\,x_{2}^{2}-x_{2}x_{3}-2\,x_{3}^{2} \right)$,\\
$
x_{4}x_{7}- \left(  x_{1}^{3}-60\,x_{1}^{2}-3\,x_{1}x_{2}-x_{1}x_{3}-x_{1}x_{4}+x_{2}^{2}-2\,x_{2}x_{3}-x_{3}^{2} \right)$,\\
$
x_{3}x_{7}- \left(  -2\,x_{1}^{3}+28\,x_{1}^{2}-x_{1}x_{2}+x_{1}x_{3}-2\,x_{2}^{2}+2\,x_{2}x_{3}+2\,x_{3}^{2} \right)$,\\
$
x_{2}x_{7}- \left(  -x_{1}^{3}-67\,x_{1}^{2}-2\,x_{1}x_{2}-3\,x_{1}x_{3}-2\,x_{1}x_{4}+2\,x_{2}^{2}-2\,x_{2}x_{3}-3\,x_{3}^{2} \right)$,\\
$
x_{1}x_{7}- \left(  -2\,x_{1}^{3}+x_{1}^{2}-2\,x_{1}x_{2}+x_{1}x_{3}-x_{1}x_{4}-x_{3}^{2} \right)$,\\
$
x_{6}^{2}- \left(  x_{1}^{3}+469\,x_{1}^{2}-2\,x_{1}x_{2}-x_{1}x_{3}-3\,x_{1}x_{4}+2\,x_{3}^{2} \right)$,\\
$
x_{5}x_{6}- \left(  x_{1}^{3}-412\,x_{1}^{2}-2\,x_{1}x_{3}-3\,x_{3}^{2} \right)$,\\
$
x_{4}x_{6}- \left(  -61\,x_{1}^{2}+2\,x_{1}x_{2}-3\,x_{1}x_{3}-3\,x_{1}x_{4}-x_{2}^{2}-2\,x_{2}x_{3}-2\,x_{3}^{2} \right)$,\\
$
x_{3}x_{6}- \left(  2\,x_{1}^{3}+29\,x_{1}^{2}+x_{1}x_{2}-2\,x_{1}x_{3}-x_{1}x_{4}-3\,x_{3}^{2} \right)$,\\
$
x_{2}x_{6}- \left(  -x_{1}^{3}-61\,x_{1}^{2}-2\,x_{1}x_{2}+x_{1}x_{3}+2\,x_{1}x_{4}-2\,x_{2}^{2}-3\,x_{2}x_{3}+2\,x_{3}^{2} \right)$,\\
$
x_{1}x_{6}- \left(  x_{1}^{3}+2\,x_{1}^{2}-x_{1}x_{2}+2\,x_{2}^{2} \right)$,\\
$
x_{5}^{2}- \left(  2\,x_{1}^{3}+342\,x_{1}^{2}-2\,x_{1}x_{2}-x_{1}x_{3}+x_{2}^{2}-3\,x_{2}x_{3}-2\,x_{3}^{2}\right)$,\\
$
x_{4}x_{5}- \left(  x_{1}^{3}+55\,x_{1}^{2}-2\,x_{1}x_{2}-2\,x_{1}x_{3}-3\,x_{1}x_{4}-x_{2}^{2}-x_{3}^{2} \right)$,\\
$
x_{3}x_{5}- \left(  -2\,x_{1}^{3}-23\,x_{1}^{2}-3\,x_{1}x_{2}-3\,x_{1}x_{3}-3\,x_{1}x_{4}-2\,x_{2}^{2}-x_{2}x_{3}+2\,x_{3}^{2} \right)$,\\
$
x_{2}x_{5}- \left(  -x_{1}^{3}+69\,x_{1}^{2}-x_{1}x_{2}-2\,x_{1}x_{3}-3\,x_{1}x_{4}-3\,x_{2}^{2}-2\,x_{2}x_{3}-2\,x_{3}^{2} \right)$,\\
$
x_{1}x_{5}- \left(  -3\,x_{1}^{3}+x_{1}^{2}-2\,x_{1}x_{2}+x_{1}x_{3}-2\,x_{1}x_{4}-3\,x_{2}^{2}+2\,x_{2}x_{3}-3\,x_{3}^{2} \right)$,\\
$
x_{4}^{2}- \left(  10\,x_{1}^{2}+2\,x_{1}x_{2}-3\,x_{1}x_{3}-3\,x_{2}x_{3}-x_{3}^{2} \right)$,\\
$
x_{3}x_{4}- \left(  x_{1}^{3}+19\,x_{1}^{2}+2\,x_{1}x_{2}+2\,x_{1}x_{4}-2\,x_{2}^{2}-3\,x_{2}x_{3} \right)$,\\
$
x_{2}x_{4}- \left(  x_{1}^{3}+13\,x_{1}^{2}-x_{1}x_{2}+2\,x_{2}x_{3} \right)$,\\
$
x_{1}^{2}x_{4}- \left(  -3\,x_{1}^{3} \right)$,
$
x_{3}^{3}- \left(  x_{1}^{3} \right)$,
$
x_{2}x_{3}^{2}- \left(  -3\,x_{1}^{3} \right)$,
$
x_{1}x_{3}^{2}- \left(  x_{1}^{3} \right)$,
$
x_{2}^{2}x_{3}- \left(  9\,x_{1}^{3} \right)$,
$
x_{1}x_{2}x_{3}- \left(  -3\,x_{1}^{3} \right)$,
$
x_{1}^{2}x_{3}- \left(  x_{1}^{3} \right)$,
t$
x_{2}^{3}- \left(  -27\,x_{1}^{3} \right)$,
$
x_{1}x_{2}^{2}- \left(  9\,x_{1}^{3} \right)$,
$
x_{1}^{2}x_{2}- \left(  -3\,x_{1}^{3} \right)$,
$
x_{1}^{4}- \left(  -x_{1}^{3} \right)$.
\end{small}

\subsection*{Concerning the proof of Theorem \ref{th:componente3}}
Here is the list of the non-null parameters forming the set $\widetilde C$:

\begin{small}
\noindent$c_{{1,2}}, c_{{1,3}}, c_{{1,4}}, c_{{1,5}}, c_{{1,6}}, c_{ {2,2}}, c_{{2,3}}, c_{{2,4}}, c_{{2,5}}, c_{{2,6}}, c_{{3,2}}, c_{{3,3}}, c_{ {3,4}}, c_{{3,5}}, c_{{3,6}}, c_{{4,2}}, c_{{4,3}}, c_{{4,4}}, c_{{4,5}}, c_{ {4,6}}, c_{{5,2}},$\\
$c_{{5,3}}, c_{{5,4}}, c_{{5,5}}, c_{{5,6}}, c_{{6,2}}, c_{{6,3}}, c_{{6,4}},c_{{6,5}}, c_{{6,6}}, c_{{7,2}}, c_{{7,3}}, c_{{7,4}}, c_{ {7,5}}, c_{{7,6}}, c_{{7,7}}, c_{{8,2}}, c_{{8,3}}, c_{{8,4}}, c_{{8,5}}, c_{ {8,6}}, c_{{9,2}},$\\
 $c_{{9,3}},c_{{9,4}},c_{{9,5}},c_{{9,6}},c_{{10,2}},c_{ {10,3}} ,c_{{10,4}},c_{{10,5}},c_{{10,6}},c_{{11,2}},c_{{11,3}},c_{{11, 4}},c_{ {11,5}},c_{{11,6}},c_{{12,2}},c_{{12,3}},c_{{12,4}},c_{{12,5}},$ \\
$c_{{12,6}},c_{{13,2}},c_{{13,3}},c_{{13,4}},c_{{13,5}},c_{{13,6}},c_{{13,7}},c_{{14,2}},c_{{14,3}},c_{{14,4}},c_{{14,5}},c_{{14,6}},c_{{15, 2}},c_{{ 15,3}},c_{{15,4}},c_{{15,5}},c_{{15,6}},$\\ 
$c_{{16,2}},c_{{16,3}}, c_{{16,4 }},c_{{16,5}},c_{{16,6}},c_{{17,2}},c_{{17,3}},c_{{17,4}},c_{ {17,5}},c _{{17,6}},c_{{18,2}},c_{{18,3}},c_{{18,4}},c_{{18,5}},c_{{18,6} },c_{{ 18,7}},c_{{19,2}},$\\
$c_{{19,3}},c_{{19,4}},c_{{19,5}},c_{{19,6}}, c_{{20,2 }},c_{{20,3}},c_{{20,4}},c_{{20,5}},c_{{20,6}},c_{{21,2}},c_{ {21,3}},c _{{21,4}},c_{{21,5}},c_{{21,6}},c_{{31,1}}$.
\end{small}
\vskip 2mm
Here are the polynomials forming the marked bases of the ideals in the family $\mathcal F_3$:

\begin{small}
\noindent$x_{7}^{2}-(c_{{1,2}}x_{3}^{2}+c_{{1,3}}x_{2}x_{3}+c_{{1,4}}x_{2}^{2}+c_{{1,5}}x_{1}x_{4}+c_{{1,6}}x_{1}x_{3}),$\\
$x_{6}x_{7}-(c_{{2,2}}x_{3}^{2}+c_{{2,3}}x_{2}x_{3}+c_{{2,4}}x_{2}^{2}+c_{{2,5}}x_{1}x_{4}+c_{{2,6}}x_{1}x_{3}),$\\
$x_{5}x_{7}-(c_{{3,2}}x_{3}^{2}+c_{{3,3}}x_{2}x_{3}+c_{{3,4}}x_{2}^{2}
+c_{{3,5}}x_{1}x_{4}+c_{{3,6}}x_{1}x_{3}),$\\
$x_{4}x_{7}-(c_{{4,2}}x_{3}^{2}+c_{{4,3}}x_{2}x_{3}+c_{{4,4}}x_{2}^{2}
+c_{{4,5}}x_{1}x_{4}+c_{{4,6}}x_{1}x_{3}),$\\
$x_{3}x_{7}-(c_{{5,2}}x_{3}^{2}+c_{{5,3}}x_{2}x_{3}+c_{{5,4}}x_{2}^{2}
+c_{{5,5}}x_{1}x_{4}+c_{{5,6}}x_{1}x_{3}),$\\
$x_{2}x_{7}-(c_{{6,2}}x_{3}^{2}+c_{{6,3}}x_{2}x_{3}+c_{{6,4}}x_{2}^{2}
+c_{{6,5}}x_{1}x_{4}+c_{{6,6}}x_{1}x_{3}),$\\
$x_{1}x_{7}-(c_{{7,2}}x_{3}^{2}+c_{{7,3}}x_{2}x_{3}+c_{{7,4}}x_{2}^{2}
+c_{{7,5}}x_{1}x_{4}+c_{{7,6}}x_{1}x_{3}+c_{{7,7}}x_{1}x_{2}),$\\
$x_{6}^{2}-(c_{{8,2}}x_{3}^{2}+c_{{8,3}}x_{2}x_{3}+c_{{8,4}}x_{2}^{2}
+c_{{8,5}}x_{1}x_{4}+c_{{8,6}}x_{1}x_{3}),$\\
$x_{5}x_{6}-(c_{{9,2}}x_{3}^{2}+c_{{9,3}}x_{2}x_{3}+c_{{9,4}}x_{2}^{2}
+c_{{9,5}}x_{1}x_{4}+c_{{9,6}}x_{1}x_{3}),$\\
$x_{4}x_{6}-(c_{{10,2}}x_{3}^{2}+c_{{10,3}}x_{2}x_{3}+c_{{10,4}}x_{2}^
{2}+c_{{10,5}}x_{1}x_{4}+c_{{10,6}}x_{1}x_{3}),$\\
$x_{3}x_{6}-(c_{{11,2}}x_{3}^{2}+c_{{11,3}}x_{2}x_{3}+c_{{11,4}}x_{2}^
{2}+c_{{11,5}}x_{1}x_{4}+c_{{11,6}}x_{1}x_{3}),$\\
$x_{2}x_{6}-(c_{{12,2}}x_{3}^{2}+c_{{12,3}}x_{2}x_{3}+c_{{12,4}}x_{2}^
{2}+c_{{12,5}}x_{1}x_{4}+c_{{12,6}}x_{1}x_{3}),$\\
$x_{1}x_{6}-(c_{{13,2}}x_{3}^{2}+c_{{13,3}}x_{2}x_{3}+c_{{13,4}}x_{2}^{2}+c_{{13,5}}x_{1}x_{4}+c_{{13,6}}x_{1}x_{3}+c_{{13,7}
}x_{1}x_{2}),$\\
$x_{5}^{2}-(c_{{14,2}}x_{3}^{2}+c_{{14,3}}x_{2}x_{3}+c_{{14,4}}x_{2}^{2}+c_{{14,5}}x_{1}x_{4}+c_{{14,6}}x_{1}x_{3}),$\\
$x_{4}x_{5}-(c_{{15,2}}x_{3}^{2}+c_{{15,3}}x_{2}x_{3}+c_{{15,4}}x_{2}^{2}+c_{{15,5}}x_{1}x_{4}+c_{{15,6}}x_{1}x_{3}),$\\
$x_{3}x_{5}-(c_{{16,2}}x_{3}^{2}+c_{{16,3}}x_{2}x_{3}+c_{{16,4}}x_{2}^{2}+c_{{16,5}}x_{1}x_{4}+c_{{16,6}}x_{1}x_{3}),$\\
$x_{2}x_{5}-(c_{{17,2}}x_{3}^{2}+c_{{17,3}}x_{2}x_{3}+c_{{17,4}}x_{2}^{2}+c_{{17,5}}x_{1}x_{4}+c_{{17,6}}x_{1}x_{3}),$\\
$x_{1}x_{5}-(c_{{18,2}}x_{3}^{2}+c_{{18,3}}x_{2}x_{3}+c_{{18,4}}x_{2}^{2}+c_{{18,5}}x_{1}x_{4}+c_{{18,6}}x_{1}x_{3}+c_{{18,7}}x_{{1}
}x_{2}),$\\
$x_{4}^{2}-(c_{{19,2}}x_{3}^{2}+c_{{19,3}}x_{2}x_{3}+c_{{19,4}}x_{2}^{2}+c_{{19,5}}x_{1}x_{4}+c_{{19,6}}x_{1}x_{3}),$\\
$x_{3}x_{4}-(c_{{20,2}}x_{3}^{2}+c_{{20,3}}x_{2}x_{3}+c_{{20,4}}x_{2}^{2}+c_{{20,5}}x_{1}x_{4}+c_{{20,6}}x_{1}x_{3}),$\\
$x_{2}x_{4}-(c_{{21,2}}x_{3}^{2}+c_{{21,3}}x_{2}x_{3}+c_{{21,4}}x_{2}^{2}+c_{{21,5}}x_{1}x_{4}+c_{{21,6}}x_{1}x_{3}),$\\
$x_{1}^{2}x_{4}, x_{3}^{3}, x_{2}x_{3}^{2}, x_{1}x_{3}^{2}, x_{2}^{2}x_{3}, x_{1}x_{2}x_{3}, x_{1}^{2}x_{3}, x_{2}^{3}, x_{1}x_{2}^{2}, x_{1}^{2}x_{2}-(c_{{31,1}}x_{1}^{3}), x_{1}^{4}.$
\end{small}
\vskip 5mm
According to the characteristic of the field $K$, here are two marked bases generating two ideals corresponding to points of the family $\widetilde{\mathcal F_3}$ at which we compute the Zariski tangent space to $\hilb_{16}^7$: for every characteristic different from $2$ and $3$, the Zariski tangent space to at least one of these points has dimension $153$:

\begin{small}
\noindent$x_{7}^{2}-\left(-x_{4}x_{1}+2\,x_{3}^{2}+4\,x_{3}x_{2}+2\,x_{3}x_{1}+4\,x_{2}^{2}\right)$,\\
$x_{7}x_{6}-\left(-x_{4}x_{1}+3\,x_{3}^{2}-x_{3}x_{2}+x_{3}x_{1}\right)$,\\
$x_{7}x_{5}-\left(-2\,x_{4}x_{1}+3\,x_{3}^{2}+2\,x_{3}x_{2}+2\,x_{3}x_{1}\right)$,\\
$x_{7}x_{4}-\left(-x_{4}x_{1}-x_{3}^{2}+x_{3}x_{1}-x_{2}^{2}\right)$,\\
$x_{7}x_{3}-\left(-x_{4}x_{1}+2\,x_{3}x_{1}-2\,x_{2}^{2}\right)$,\\
$x_{7}x_{2}-\left(-x_{4}x_{1}+x_{3}^{2}+2\,x_{3}x_{2}+3\,x_{2}^{2}\right)$,\\
$x_{7}x_{1}-\left(x_{4}x_{1}+4\,x_{3}^{2}-x_{3}x_{2}-2\,x_{3}x_{1}+3\,x_{2}^{2}+2\,x_{2}x_{1}\right)$,\\
$x_{6}^{2}-\left(-2\,x_{4}x_{1}-2\,x_{3}x_{2}+3\,x_{3}x_{1}+3\,x_{2}^{2}\right)$,\\
$x_{6}x_{5}-\left(4\,x_{3}^{2}+x_{3}x_{2}+3\,x_{3}x_{1}+4\,x_{2}^{2}\right)$,\\
$x_{6}x_{4}-\left(2\,x_{3}^{2}+4\,x_{3}x_{2}+2\,x_{3}x_{1}+4\,x_{2}^{2}\right)$,\\
$x_{6}x_{3}-\left(-x_{4}x_{1}+x_{3}^{2}+4\,x_{2}^{2}\right)$,
$x_{6}x_{2}-\left(-2\,x_{4}x_{1}-2\,x_{3}^{2}-2\,x_{3}x_{2}+2\,x_{3}x_{1}\right)$,\\
$x_{6}x_{1}-\left(-x_{3}^{2}-x_{3}x_{2}-x_{3}x_{1}-x_{2}^{2}+x_{2}x_{1}\right)$,\\
$x_{5}^{2}-\left(2\,x_{4}x_{1}+4\,x_{3}^{2}-x_{3}x_{2}-2\,x_{3}x_{1}-2\,x_{2}^{2}\right)$,\\
$x_{5}x_{4}-\left(2\,x_{4}x_{1}+4\,x_{3}^{2}+x_{3}x_{2}+x_{2}^{2}\right)$,\\
$x_{5}x_{3}-\left(-x_{4}x_{1}-x_{3}^{2}+4\,x_{3}x_{2}+x_{3}x_{1}-2\,x_{2}^{2}\right)$,\\
$x_{5}x_{2}-\left(-2\,x_{4}x_{1}+2\,x_{3}^{2}+4\,x_{3}x_{1}+x_{2}^{2}\right)$,\\
$x_{5}x_{1}-\left(-x_{4}x_{1}-2\,x_{3}^{2}-2\,x_{3}x_{2}-x_{2}^{2}+x_{2}x_{1}\right)$,\\
$x_{4}^{2}-\left(3\,x_{4}x_{1}+4\,x_{3}^{2}+2\,x_{3}x_{2}+x_{3}x_{1}-2\,x_{2}^{2}\right)$,\\
$x_{4}x_{3}-\left(-2\,x_{4}x_{1}-2\,x_{3}^{2}+4\,x_{3}x_{2}+2\,x_{3}x_{1}-x_{2}^{2}\right)$,\\
$x_{4}x_{2}-\left(-2\,x_{3}^{2}-2\,x_{3}x_{2}+2\,x_{3}x_{1}+x_{2}^{2}\right)$,\\
$x_{4}x_{1}^{2}$,\quad 
$x_{3}^{3}$,\quad
$x_{3}^{2}x_{2}$,\quad
$x_{3}^{2}x_{1}$,\quad
$x_{3}x_{2}^{2}$,\quad
$x_{3}x_{2}x_{1}$,\quad
$x_{3}x_{1}^{2}$,\quad
$x_{2}^{3}$,\quad
$x_{2}^{2}x_{1}$,\quad
$x_{2}x_{1}^{2}-\left(4\,x_{1}^{3}\right)$,\quad
$x_{1}^{4}$.

\vskip 5mm

\noindent$x_{7}^{2}-\left(3\,x_{4}x_{1}-x_{3}^{2}+x_{3}x_{2}+x_{3}x_{1}+x_{2}^{2}\right)$,\\
$x_{7}x_{6}-\left(-2\,x_{4}x_{1}-x_{3}^{2}+2\,x_{3}x_{2}+3\,x_{3}x_{1}\right)$,\\
$x_{7}x_{5}-\left(x_{3}^{2}-2\,x_{3}x_{2}+4\,x_{3}x_{1}+2\,x_{2}^{2}\right)$,\\
$x_{7}x_{4}-\left(x_{4}x_{1}+2\,x_{3}^{2}-x_{3}x_{2}+4\,x_{3}x_{1}+4\,x_{2}^{2}\right)$,\\
$x_{7}x_{3}-\left(x_{4}x_{1}+3\,x_{3}^{2}\right)$,\\
$x_{7}x_{2}-\left(3\,x_{3}^{2}+x_{3}x_{2}+x_{3}x_{1}-2\,x_{2}^{2}\right)$,\\
$x_{7}x_{1}-\left(2\,x_{4}x_{1}+x_{3}x_{2}+4\,x_{3}x_{1}+4\,x_{2}^{2}+3\,x_{2}x_{1}\right)$,\\
$x_{6}^{2}-\left(-2\,x_{4}x_{1}-2\,x_{3}^{2}+x_{3}x_{2}+2\,x_{3}x_{1}-2\,x_{2}^{2}\right)$,\\
$x_{6}x_{5}-\left(-2\,x_{3}^{2}-x_{3}x_{1}+4\,x_{2}^{2}\right)$,\\
$x_{6}x_{4}-\left(2\,x_{3}^{2}+4\,x_{3}x_{2}-2\,x_{3}x_{1}+2\,x_{2}^{2}\right)$,\\
$x_{6}x_{3}-\left(-2\,x_{4}x_{1}+4\,x_{3}^{2}+x_{3}x_{2}+2\,x_{3}x_{1}+3\,x_{2}^{2}\right)$,\\
$x_{6}x_{2}-\left(x_{4}x_{1}-x_{3}^{2}-2\,x_{3}x_{2}+x_{3}x_{1}+4\,x_{2}^{2}\right)$,\\
$x_{6}x_{1}-\left(-2\,x_{4}x_{1}+x_{3}^{2}-2\,x_{3}x_{2}-2\,x_{3}x_{1}+x_{2}^{2}+2\,x_{2}x_{1}\right)$,\\
$x_{5}^{2}-\left(-2\,x_{4}x_{1}+3\,x_{3}^{2}+x_{3}x_{1}+2\,x_{2}^{2}\right)$,\\
$x_{5}x_{4}-\left(x_{4}x_{1}-2\,x_{3}^{2}-2\,x_{3}x_{2}-x_{3}x_{1}+3\,x_{2}^{2}\right)$,\\
$x_{5}x_{3}-\left(4\,x_{4}x_{1}+4\,x_{3}^{2}-x_{3}x_{1}-x_{2}^{2}\right)$,\\
$x_{5}x_{2}-\left(-x_{4}x_{1}+4\,x_{3}^{2}-x_{3}x_{2}-2\,x_{3}x_{1}\right)$,\\
$x_{5}x_{1}-\left(x_{4}x_{1}+4\,x_{3}^{2}-x_{3}x_{2}-2\,x_{3}x_{1}-2\,x_{2}^{2}\right)$,\\
$x_{4}^{2}-\left(-x_{4}x_{1}-x_{3}^{2}+4\,x_{3}x_{2}+x_{3}x_{1}+3\,x_{2}^{2}\right)$,\\
$x_{4}x_{3}-\left(4\,x_{4}x_{1}-x_{3}^{2}+4\,x_{3}x_{2}-2\,x_{3}x_{1}-2\,x_{2}^{2}\right)$,\\
$x_{4}x_{2}-\left(3\,x_{3}^{2}-2\,x_{3}x_{2}+x_{3}x_{1}+3\,x_{2}^{2}\right)$,\\
$x_{4}x_{1}^{2},x_{3}^{3},x_{3}^{2}x_{2},x_{3}^{2}x_{1},x_{3}x_{2}^{2},x_{3}x_{2}x_{1},x_{3}x_{1}^{2},x_{2}^{3},x_{2}^{2}x_{1},x_{2}x_{1}^{2}-\left(4\,x_{1}^{3}\right),x_{1}^{4}$

\end{small}

\end{document}